\newtheorem{theorem}{Theorem}[section]
\newtheorem*{mydef}{Theorem A}
\newtheorem*{mydef2}{Theorem D}
\newtheorem*{mydef3}{Corollary A}
\newtheorem*{mydef4}{Theorem B}
\newtheorem*{mydef5}{Theorem C}
\newtheorem*{mydef6}{Theorem E}
\newtheorem{lemma}[theorem]{Lemma}
\newtheorem{conj}[theorem]{Conjecture}
\newtheorem*{claim}{Claim}
\theoremstyle{definition}
\newtheorem{definition}[theorem]{Definition}
\newtheorem{example}[theorem]{Example}
\theoremstyle{remark}
\newtheorem{remark}[theorem]{Remark}
\theoremstyle{proposition}
\newtheorem{proposition}{Proposition}
\theoremstyle{Corollary}
\newtheorem{corollary}{Corollary}
\numberwithin{equation}{section}
	\def\Diff{\operatorname{Diff}}
	\def\inte{\operatorname{int}}
	\def\L{\operatorname{Leb}}
       	\def\Oh{\operatorname{O}}
\begin{document}
	
\title{Phase Transitions on the Markov and Lagrange Dynamical Spectra}

\author{Davi Lima}

\address{Davi Lima: Av Lourival Melo Mota s/n}
\curraddr{Instituto de Matem\'atica, Universidade Federal de Alagoas-Brazil}

\email{davimat@impa.br}
\thanks{The first author was partially supported by CAPES and CNPq.}


\author{Carlos Gustavo Moreira}
\address{Carlos Gustavo Moreira: School of Mathematical Sciences,   
Nankai University, Tianjin 300071, P. R. China, and 
IMPA, Estrada Dona Castorina 110, 22460-320, Rio de Janeiro, Brazil
}
\email{gugu@impa.br}
\thanks{The second author was partially supported by CNPq and Faperj.}

\date{\today}



\keywords{Fractal geometry, Markov Dynamical Spectrum, Lagrange Dynamical Spectrum, Regular Cantor sets, Horseshoes, Hyperbolic Dynamics, Diophantine Approximation}

\begin{abstract}
The Markov and Lagrange dynamical spectra were introduced by Moreira and share several geometric and topological aspects with the classical ones. However, some features of generic dynamical spectra associated to hyperbolic sets can be proved in the dynamical case and we do not know if they are true in classical case.

They can be a good source of natural conjectures about the classical spectra: it is natural to conjecture that some properties which hold for generic dynamical spectra associated to hyperbolic maps also holds for the classical Markov and Lagrange spectra.

In this paper, we show that, for generic dynamical spectra associated to horseshoes, there are transition points $a$ and $\tilde{a}$ in the Markov and Lagrange spectra respectively, such that for any $\delta>0$, the intersection of the Markov spectrum with $(-\infty,a-\delta)$ 
has Hausdorff dimension smaller than one, while the intersection of the Markov spectrum with $(a,a+\delta)$ has non-empty interior. Similarly, the intersection of the Lagrange spectrum with $(-\infty,\tilde{a}-\delta)$ has Hausdorff dimension smaller than one, while the intersection of the Lagrange spectrum with $(\tilde{a},\tilde{a}+\delta)$ has non-empty interior. We give an open set of examples where $a\neq \tilde{a}$ and we prove that, in the conservative case, generically, $a=\tilde{a}$ and, for any $\delta>0$, the intersection of the Lagrange spectrum with $(a-\delta, a)$ 
has Hausdorff dimension one.
\end{abstract}

\maketitle

\tableofcontents
\section{Introduction}

.

\subsection{Classical Markov and Lagrange spectra from Number Theory}\
Regular Cantor sets on the real line play a fundamental role in dynamical systems and notably in problems from Number Theory related to diophantine approximation. They are defined by expansive maps and have some kind of self similarity property: small parts of them are diffeomorphic to big parts with uniformly bounded distortion. Some background on the regular Cantor sets with are relevant to our work can be found in \cite{CF-89}, \cite{PT93}, \cite{MY-01} and \cite{MR-15}.

The classical Lagrange spectrum arises in number theory as the set of finite best constants of irrational numbers when it is approximated by rational numbers. Precisely, given any irrational number $\alpha$ we know by Dirichlet's theorem that the inequality, $|\alpha -p/q|<1/q^2$, has infinitely many rational solutions
$p/q$. Hurwitz and Markov improved this result showing that $$|\alpha -p/q|<1/(\sqrt{5}q^2).$$

\bigskip

Meanwhile, for a fixed irrational $\alpha$, better results can be expected. This leads us to associate to each $\alpha$ its best constant of approximation (Lagrange value of $\alpha$), given by
\begin{eqnarray*}
	k(\alpha)&=&\sup \left\{k>0;\left|\alpha -\frac{p}{q}\right|<\frac{1}{kq^2} \ \mbox{has infinitely many rational solution} \ \frac{p}{q}\right \}\\&
	=&\limsup_{p\in \mathbb{Z},q\in \mathbb{N}, p,q\to \infty}|q(q\alpha-p)|^{-1}\in \mathbb{R}\cup \{\infty\}.
\end{eqnarray*}

The Hurwitz-Markov's theorem implies that $k(\alpha)\geq \sqrt{5}.$ The set of irrational numbers $\alpha$ such that $k(\alpha)<\infty$ has zero Lebesgue measure, but Hausdorff dimension $1$. Consider the set
$$L=\{k(\alpha); \alpha \in \mathbb{R}-\mathbb{Q}, k(\alpha)<\infty\}.$$

The set $L$ is known as the \textbf{Lagrange spectrum}. It is a closed subset of $\mathbb{R}$ and Markov showed in \cite{M79} the 
$$L\cap (-\infty, 3)=\{k_1=\sqrt{5}<k_2=2\sqrt{2}<k_3=\frac{\sqrt{221}}{5}<...\},$$
where $k^2_n\in \mathbb{Q}$ for every $n\in \mathbb{N}$ and $k_n\to 3$ when $n\to \infty$ (for more properties of $L$, cf. [CF 89]).

In 1947, M. Hall (cf. \cite{H47}) proved that for the regular Cantor set $C(4)$ of real numbers in $[0,1]$ for which they have coefficients $1, 2, 3, 4$ in its continued fraction expansion, we have
$$C(4)+C(4)=\{x+y;x,y\in C(4)\}=[\sqrt{2}-1,4(\sqrt{2}-1)].$$

We can write the continued fraction of $\alpha$ as $\alpha=[a_0;a_1,...]$ and for each $n\in \mathbb{N}$, we put $\alpha_n=[a_n;a_{n+1},...]$ and $\beta_n=[0;a_{n-1}, a_{n-2},...,a_1].$ Using elementary continued fraction techniques it can be showed that 
$$k(\alpha)=\limsup_{n\to \infty}(\alpha_n+\beta_n).$$

With the above characterization of $k(\alpha)$ and from Hall's results, it follows that $L\supset [6,+\infty).$

In 1975, Freiman (cf. \cite{CF-89}) proved some difficult results on the arithmetic sums of regular Cantor sets, related to continued fractions, and using them he showed that the biggest interval contained in $L$ is $[c,+\infty)$, where 
$$c=\frac{2221564096+283748\sqrt{462}}{491993569}\simeq 4,52782956616...$$

There are several characterizations of $L$. We give one that will be useful for dynamical generalizations. Throughout in this paper, we denote $\mathbb{N}=\{1,2,...\}$ the set of natural numbers while $\mathbb{N}_0=\mathbb{N}\cup \{0\}$.

Consider $\Sigma=\mathbb{N}^{\mathbb{Z}}$ and let $\sigma:\Sigma \rightarrow \Sigma$ be the shift map defined by $\sigma((a_n)_{n\in \mathbb{Z}})=(a_{n+1})_{n\in \mathbb{Z}}.$ If we define $f:\Sigma \rightarrow \mathbb{R}$ by 
$$f((a_n)_{n\in \mathbb{Z}})=\alpha_0+\beta_0=[a_0;a_1,...]+[0;a_{-1},a_{-2},...],$$
then 
$$L=\left\{\limsup_{n\to \infty} f(\sigma^n(\underline{\theta})); \underline{\theta}\in \Sigma \right \}.$$

In a similar way we can define the set
\begin{equation}\label{m1}
M=\left\{\sup_{n\in \mathbb{Z}}f(\sigma^n(\underline{\theta})); \underline{\theta}\in \Sigma \right\}.
\end{equation}
The set $M$ is called \textbf{Markov spectrum} and it also has an arithmetical interpretation. In fact, if  $\mathcal{Q}=\{f(x,y)=ax^2+bxy+cy^2; \ f \ \mbox{real indefinite and} \ b^2-4ac=1\}$ and $m(f)=\inf\{|f(x,y)|; (x,y)\in \mathbb{Z}^2\backslash \{(0,0)\} \}$ then (cf. \cite{CF-89}) 
$$M=\{m(f)^{-1}<\infty;f\in \mathcal{Q}\}.$$
Thanks to (\ref{m1}) we have that $L\subset M$. During a long time it was believed that $L=M$. In fact this is not true, Freiman showed that the number $\sigma\approx 3.118120178...$ is in $M$ but not in $L$ ( cf. \cite{CF-89} ). Moreover $L$ and $M$ are closed sets. 
An important open question related to Lagrange and Markov spectra is to know whether $C(2)+C(2)$ contains any interval. S. Astels (cf. \cite{SA99})
showed using a method similar to local thickness that $C(2)+C(5)+\mathbb{Z}=\mathbb{R}$.

C. G. Moreira showed in \cite{M3}
that if we define $d(t)=HD(L\cap(-\infty, t))$ then $d(t)$ is a continuous and surjective function from $\mathbb{R}$ onto $[0,1]$ and 
\begin{equation}\label{m2}
d(t)=HD(M\cap (-\infty,t))
\end{equation}
where $HD(A)$ is the Hausdorff dimension of a set $A\subset \mathbb{R}$. Let $a=\inf\{t\in \mathbb{R}; d(t)=1\}$. It follows from the results in \cite{M3} and the Bumby's results in \cite{B} 
that $3.33437...<a<\sqrt{12}$.  In \cite{M3} Moreira ask the following question:

\textbf{Question 1.} Is it true that for every $\delta>0$ 
\begin{eqnarray}\label{Q1}
\mbox{int}(L\cap (\infty, a+\delta))\neq \emptyset?
\end{eqnarray} 

In the same context we leave the following question:

\textbf{Question 2.} In the negative case for Question 1, what is the smallest $\delta>0$ for which (\ref{Q1}) 
holds? 

\bigskip

We write $C(N)=\{x=[0;a_1,a_2,...]; a_j\le N, \forall \ j\ge 1\}$. Since $HD(C(2))>0.53$, a positive answer for Question 1 implies that $C(2)+C(2)$ contains an interval.
\subsection{Generalized Markov and Lagrange spectra from Dynamical Systems}

The above questions are closely related to Palis' conjecture on the regular Cantor sets. It says that, generically, the arithmetic difference of two regular Cantor sets $K$ and $K'$,
$$K-K'=\{x-y;x\in K, y\in K'\},$$
have zero Lebesgue measure or nonempty interior. In \cite{MY-01} Moreira and Yoccoz proved a stronger version of this conjecture.


The characterization of $L$ via $(\Sigma, \sigma, f)$ admits a natural generalization in the context of hyperbolic dynamics of surfaces. Throughout the text $M$ denotes a bidimensional surface.  

Let $\varphi:M\rightarrow M$ be a $C^2$ diffeomorphism of a surface $M$ with a compact invariant set $\Lambda$ (for example a horseshoe) and let $f:M\rightarrow \mathbb{R}$ be a continuous function. We define for $x\in M$, \textbf{the Lagrange value} of $x$ associated to $f$ and $\varphi$ as being the number $l_{f,\varphi}(x)=\limsup_{n\to \infty}f(\varphi^n(x))$. Similarly, the \textbf{Markov value} of $x$ associated to $f$ and $\varphi$ is the number $m_{f,\varphi}(x)=\sup_{n\in \mathbb{Z}}f(\varphi^n(x))$. The sets
$$L_f(\Lambda)=\{l_{f,\varphi}(x);x\in \Lambda\}$$
and
$$M_f(\Lambda)=\{m_{f,\varphi}(x);x\in \Lambda\}$$
are called \textbf{Lagrange Spectrum} of $(f,\Lambda)$ and \textbf{Markov Spectrum} of $(f, \Lambda)$.

In a similar way as the classical Lagrange and Markov spectra we can define two real functions
\begin{equation}\label{f1}
d_{L_f(\Lambda)}(t)=HD(L_{f}(\Lambda)\cap (-\infty,t))
\end{equation}
and
\begin{equation}
d_{M_f(\Lambda)}(t)=HD(M_{f}(\Lambda)\cap (-\infty,t)).
\end{equation}

We leave the question

\textbf{Question 3.}\label{Q3}
When $d_{L_f(\Lambda)}(t)=d_{M_f(\Lambda)}(t)$ and is it a continuous functions?

\bigskip

Cerqueira, Matheus and Moreira showed in \cite{CMM16} in the conservative case, i.e., which preserves a smooth measure, that $d_{L_f(\Lambda)}=d_{M_f(\Lambda)}$ and it is a continuous function. The other cases remains open.

Our main goal in this paper is to give a positive answer to question 1 for the Markov and Lagrange spectra associated to generic hyperbolic dynamics. 


Given $f:M\rightarrow \mathbb{R}$ we define 
$$\Lambda_t=\bigcap_{n\in \mathbb{Z}}\varphi^n(\Lambda\cap f^{-1}(-\infty, t]).$$

We can imagine $\Lambda_t$ as the horseshoe under the viewpoint of $f$ up the time $t$, but $\Lambda_t$ may not be a horseshoe. Each continuous function sees the horseshoe at time $t$ in its own way. 


Given $\varphi\in \mbox{Diff}^2(M)$ with a horseshoe $\Lambda$, such that $HD(\Lambda)>1$ and $f\in C^1({M,\mathbb{R}})$ we put $a=a(f,\varphi)=\sup\{t\in \mathbb{R}; HD(\Lambda_t)<1\}$.

\begin{mydef}\label{TA}
	Given $\varphi\in \Diff^2(M)$ with a horseshoe $\Lambda$, $HD(\Lambda)>1$ and $k\ge 1$, there are an open set $\mathcal{U}\ni \varphi$ and a residual set  $\mathcal{R}\subset \mathcal{U}$ such that for every $\psi\in \mathcal{R}$ there is a $C^k$-residual subset $H_{\psi}\subset C^k(M,\mathbb{R})$ such that
		$$\L(M_f(\Lambda_{\psi})\cap (-\infty,a-\delta))=0,$$
		but
	$$\inte(M_f(\Lambda_{\psi})\cap (-\infty,a+\delta))\neq \emptyset,$$
	for all $\delta>0$, where $a=a(f,\psi)$.
\end{mydef}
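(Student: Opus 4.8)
The proof naturally separates into the two displayed assertions, and the plan is to dispose of the measure‑zero one first, since it needs no genericity at all. Writing $(\Lambda_\psi)_t=\bigcap_{n\in\mathbb Z}\psi^n(\Lambda_\psi\cap f^{-1}(-\infty,t])$, the starting point is the elementary identity
\[
M_f(\Lambda_\psi)\cap(-\infty,t]=\{\,m_{f,\psi}(x):x\in(\Lambda_\psi)_t\,\}\subseteq f((\Lambda_\psi)_t),
\]
which is immediate from the observation that $m_{f,\psi}(x)\le t$ exactly when $f(\psi^n(x))\le t$ for every $n\in\mathbb Z$, i.e. when $x\in(\Lambda_\psi)_t$. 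Since $f\in C^k$ with $k\ge 1$ is Lipschitz on the compact set $\Lambda_\psi$, one has $HD(f((\Lambda_\psi)_t))\le HD((\Lambda_\psi)_t)$; and since $s\le t$ implies $(\Lambda_\psi)_s\subseteq(\Lambda_\psi)_t$, the function $t\mapsto HD((\Lambda_\psi)_t)$ is non‑decreasing, so the definition $a=a(f,\psi)=\sup\{t: HD((\Lambda_\psi)_t)<1\}$ forces $HD((\Lambda_\psi)_{a-\delta})<1$ for every $\delta>0$. Hence $HD(M_f(\Lambda_\psi)\cap(-\infty,a-\delta))<1$, which gives $\L(M_f(\Lambda_\psi)\cap(-\infty,a-\delta))=0$.

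For the nonempty‑interior assertion the plan is, for each fixed $\delta>0$, to locate a subhorseshoe $\Lambda'$ of $\Lambda_\psi$ with $\Lambda'\subseteq(\Lambda_\psi)_{a+\delta/2}$ and $HD(\Lambda')>1$, and then to use it to manufacture an interval inside $M_f(\Lambda_\psi)$. Note first that such a $\Lambda'$ is $\psi$‑invariant and contained in $f^{-1}(-\infty,a+\delta/2]$, so every orbit through $\Lambda'$ stays there and $m_{f,\psi}(\Lambda')\subseteq(-\infty,a+\delta/2]\subseteq(-\infty,a+\delta)$; this automatically places any interval we build in the required window. To get $HD(\Lambda')>1$ one first needs $HD((\Lambda_\psi)_{a+\delta/2})>1$: granting this, the approximation of $HD((\Lambda_\psi)_t)$ from below by Hausdorff dimensions of subhorseshoes of $\Lambda_\psi$ contained in $(\Lambda_\psi)_t$ (a structural fact available in this setting), together with the classical identity $HD(\Lambda')=HD(K^s_{\Lambda'})+HD(K^u_{\Lambda'})$ for horseshoes, yields $\Lambda'$ with $HD(K^s_{\Lambda'})+HD(K^u_{\Lambda'})>1$, the input needed below.

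Given such a $\Lambda'$, the plan is to realize inside $M_f(\Lambda_\psi)$ an arithmetic sum of two regular Cantor sets of dimension sum larger than one, and then invoke the stable‑intersection theory. Fixing a Markov partition for $\Lambda'$, passing to a finite sub‑subhorseshoe, and fixing an admissible word that pins down the position and a neighborhood where the orbital supremum of $f$ is attained, the restriction of $m_{f,\psi}$ to the corresponding family of points becomes — provided $f$ is in general position with respect to the stable and unstable foliations of $\Lambda'$, which is a residual condition defining part of $H_\psi$ — a function whose image contains $g(K^s_{\Lambda'})+h(K^u_{\Lambda'})$ for $C^1$ diffeomorphisms $g,h$ onto their images. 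Since $HD(g(K^s_{\Lambda'}))+HD(h(K^u_{\Lambda'}))=HD(\Lambda')>1$, the Moreira--Yoccoz theorem on stable intersections of regular Cantor sets of large Hausdorff dimension (\cite{MY-01}) applies: taking $\mathcal U$ a neighborhood of $\varphi$ small enough that $HD(\Lambda_\psi)>1$ persists (continuity of the Hausdorff dimension of horseshoes under perturbation), and $\mathcal R\subseteq\mathcal U$ the residual set on which, for every combinatorial type of subhorseshoe of $\Lambda_\psi$, this stable‑intersection property holds robustly, the sum $g(K^s_{\Lambda'})+h(K^u_{\Lambda'})$ contains an interval. By the location remark above, that interval lies in $M_f(\Lambda_\psi)\cap(-\infty,a+\delta)$, so $\inte(M_f(\Lambda_\psi)\cap(-\infty,a+\delta))\neq\emptyset$; running this for $\delta=1/n$ and intersecting the resulting (open and dense) conditions on $f$ over $n$ keeps $H_\psi$ residual and yields the conclusion for all $\delta>0$ at once.

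The step I expect to be the genuine obstacle is precisely the one assumed above: guaranteeing $HD((\Lambda_\psi)_t)>1$ for every $t>a(f,\psi)$, equivalently that the non‑decreasing function $t\mapsto HD((\Lambda_\psi)_t)$ has no plateau at the value $1$, so that high‑dimensional subhorseshoes exist arbitrarily close below the transition value — without this, one can only extract subhorseshoes of dimension $<1$ just above $a$, the corresponding Cantor‑set sums are borderline and generically of measure zero, and the interior conclusion would fail. Since $HD(\Lambda)>1$ and $(\Lambda_\psi)_t\uparrow\Lambda_\psi$ as $t\uparrow\max_{\Lambda_\psi}f$, the dimension does eventually exceed $1$; what must be arranged, through the choice of $\mathcal R$ and of $H_\psi$, is the absence of a flat piece at level $1$, which is where the continuity and (generic in $f$) strict monotonicity of this dimension function in the relevant range must be established, in the spirit of the dimension‑theoretic analysis of generic dynamical spectra. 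Once that is in hand, everything else is the now‑standard passage from ``stable plus unstable dimension exceeds one'' to ``the spectrum contains an interval''.
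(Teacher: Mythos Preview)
Your measure-zero half is fine and matches the paper: $M_f(\Lambda_\psi)\cap(-\infty,t]\subset f((\Lambda_\psi)_t)$, $f$ is Lipschitz, and $HD((\Lambda_\psi)_{a-\delta})<1$ by definition of $a$.

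The interior half has a genuine gap, and it is not quite the one you flag. You correctly isolate the plateau problem --- ensuring $HD((\Lambda_\psi)_{a+\delta})>1$ rather than merely $\ge 1$ --- but your proposed cure (generic-in-$f$ strict monotonicity of $t\mapsto HD((\Lambda_\psi)_t)$) is not how the paper proceeds, and your surrounding strategy would fail even if the plateau were removed. The problem is that you pass from $HD((\Lambda_\psi)_{a+\delta})>1$ to a \emph{subhorseshoe} $\Lambda'\subset(\Lambda_\psi)_{a+\delta}$ with $HD(\Lambda')>1$ via ``approximation of $HD((\Lambda_\psi)_t)$ from below by Hausdorff dimensions of subhorseshoes''. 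That approximation is false in general: $(\Lambda_\psi)_t$ is only a hyperbolic set of finite type, and its dimension can come entirely from a \emph{transient} component $W^u(\Lambda_1)\cap W^s(\Lambda_2)$ with $HD(K^s(\Lambda_1))+HD(K^u(\Lambda_2))>1$ while every transitive piece $\Lambda_i$ has $HD(\Lambda_i)<1$. Theorem~C of the paper is built precisely on such an example, and this is exactly why $a(f,\psi)$ (the Markov threshold) can be strictly smaller than $\tilde a(f,\psi)$ (the Lagrange threshold). With your subhorseshoe approach you would at best prove the Lagrange statement at $\tilde a$, not the Markov statement at $a$.

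The paper fixes both issues at once. The residual set $\mathcal R$ is chosen (Proposition~\ref{2}) so that \emph{no hyperbolic set of finite type} of $\Lambda_\psi$ has Hausdorff dimension exactly $1$; this is a condition on $\psi$, not on $f$. Then, for $\delta'<\delta$ with $f^{-1}(a+\delta')\cap\Lambda_\psi\neq\emptyset$, a fine Markov partition gives a hyperbolic set of finite type $\tilde\Lambda$ with $(\Lambda_\psi)_{a+\delta'}\subset\tilde\Lambda\subset(\Lambda_\psi)_{a+\delta}$. The lower inclusion forces $HD(\tilde\Lambda)\ge HD((\Lambda_\psi)_{a+\delta'})\ge 1$ (monotonicity plus the definition of $a$), and the genericity of $\psi$ rules out equality, so $HD(\tilde\Lambda)>1$. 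This simultaneously kills the plateau and produces the right object: not a subhorseshoe but a hyperbolic set of finite type of dimension $>1$. One then needs the Moreira--Roma\~na theorem in its adapted form (Theorem~\ref{MR-adapted}, Corollary~\ref{c1}), which yields $\mathrm{int}\,M_f(\tilde\Lambda)\neq\emptyset$ for such $\tilde\Lambda$; since $M_f(\tilde\Lambda)\subset M_f(\Lambda_\psi)\cap(-\infty,a+\delta]$, the conclusion follows.
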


\begin{mydef3} We have that
	$$\sup\{t\in \mathbb{R}; HD(\Lambda_t)<1\}=\inf\{t\in \mathbb{R};\inte(M_f(\Lambda)\cap (-\infty,t+\delta))\neq \emptyset, \delta>0\}.$$
\end{mydef3}

In a similar way we have

\begin{mydef4}\label{LPPT}
	Given $\varphi\in \Diff^2(M)$ with a horseshoe $\Lambda$, $HD(\Lambda)>1$ and $k\ge 1$, there are an open set $\mathcal{U}\ni \varphi$ and a residual set  $\mathcal{R}\subset \mathcal{U}$ such that for every $\psi\in \mathcal{R}$ there are a $C^k$-residual subset $H_{\psi}\subset C^k(M,\mathbb{R})$ and $\tilde{a}:=\tilde{a}(f,\psi)$ such that
	$$\L(L_f(\Lambda_{\psi})\cap (-\infty,\tilde{a}-\delta))= 0$$
	but
	$$\inte(L_f(\Lambda_{\psi})\cap (-\infty,\tilde{a}+\delta))\neq \emptyset,$$
	for all $\delta>0$, where $\tilde{a}=\tilde{a}(f,\psi)$.
\end{mydef4}

From the above theorem we can ask the if $a=\tilde{a}$. To the answer this question we have the following theorem.

\begin{mydef5}\label{TheoremC}
	There is a $C^2$-diffeomorphism $\varphi$ on the sphere, such that there are a $C^2$-open $\mathcal{U}\ni \varphi$ and an open and dense set $\mathcal{U}^{\ast}\subset \mathcal{U}$ such that $a(f,\psi)<\tilde{a}(f,\psi)$ for every $\psi\in \mathcal{U}^{\ast}$ and for a $C^1$-open set of $f:\mathbb{S}^2\rightarrow \mathbb{R}$. In particular, 
	$$\inte (M_f(\Lambda_{\psi})\setminus L_f(\Lambda_{\psi}))\neq \emptyset.$$
\end{mydef5}

Nevertheless we can say generically when $a=\tilde{a}$. Let $\Diff^2_{\ast}(M)\subset \Diff^2(M)$ be the set of $C^2$-conservative diffeomorphisms on $M$. We have the following theorem:

\begin{mydef2}\label{TD}
	Given $\varphi\in \Diff^2_{\ast}(M)$ with a horseshoe $\Lambda$, $HD(\Lambda)>1$ and $k\ge 2$, there are an open set $\Diff^2_{\ast}(M)\supset \mathcal{U}\ni \varphi$ and a residual set  $\mathcal{U}^{\ast \ast}\subset \mathcal{U}$ such that for every $\psi\in \mathcal{U}^{\ast \ast}$ there is a $C^k$-residual subset $\mathcal{R}_{\psi}\subset C^k(M,\mathbb{R})$ such that if $f\in \mathcal{R}_{\psi}$ then
	$$\L(M_f(\Lambda_{\psi})\cap (-\infty,a-\delta))= 0 = \L(L_f(\Lambda_{\psi})\cap (-\infty,a-\delta))$$
	and
	$$\inte(L_f(\Lambda_{\psi})\cap (-\infty,a+\delta))\neq \emptyset \neq \inte(M_f(\Lambda_{\psi})\cap (-\infty,a+\delta)).$$
	for all $\delta>0$, where $a=a(f,\psi)$ defined as above.
	Moreover, 
	$$HD(M_f(\Lambda_{\psi})\cap (-\infty,a)))=HD(L_f(\Lambda_{\psi})\cap (-\infty,a))=1.$$\end{mydef2} 

In a similar way, we can define $b(f,\varphi)=\inf\{t\in \mathbb{R}; HD(\Lambda_t)>0\}$. We also prove the following theorem

\begin{mydef6}\label{TE}
	If $\varphi\in \Diff^2(M)$ has a horseshoe $\Lambda$, $k\ge 1$, there is a residual set $\mathcal{R}_{\Lambda}$ in $C^k(M,\mathbb{R})$ such that for every $f\in \mathcal{R}_{\Lambda}$ 
	$$HD(L_f(\Lambda)\cap (-\infty,b+\delta))>0$$
	and
	$$HD(M_f(\Lambda)\cap (-\infty,b+\delta))>0,$$
	for all $\delta>0$, where $b=b(f,\varphi)$.
\end{mydef6}

We observe that, while the Theorem D is unknown to the classical Markov and Lagrange spectra, the Theorem E is also true in the classical case (cf. \cite{M3}). 

These results are close related to fractal geometry of horseshoe. We begin giving some important results on the fractal geometry of regular Cantor sets and horseshoes which have relevance by yourself.

\subsection{Organization of article}

In section 2, we show that given a regular Cantor, set we can find,  sufficiently close to it, another regular Cantor with different Hausdorff dimension. We state in section 3, a similar result as the previous section, about horseshoes and hyperbolic set of finite type. In section 4, we remember known results on the Markov and Lagrange spectra and make some remarks which derive on the previous results so that we can use them. In section 5, using the result of the sections 3 and 4 we prove Theorem A. In Section 6, we prove Theorems B, C and D. Finally, in section 7 we prove Theorem E. 

\section{Non-stability of the Hausdorff dimension of regular Cantor sets}

In this section, we prove that given a regular Cantor set $K$, there is, for any $k\in \mathbb{N}$, $C^k$-arbitrarily close to it, another regular Cantor set with different Hausdorff dimension. We begin recalling the definition of regular Cantor sets.

\begin{definition}
	A set $K\subset \mathbb{R}$ is called a $C^s$-\textbf{regular Cantor set}, $s\geq 1$, if there exists a collection $\mathcal{P}=\{I_1,I_2,...,I_r\}$ of compacts intervals and a $C^s$ expanding map $\psi$, defined in a neighbourhood of $\displaystyle \cup_{1\leq j\leq r}I_j$ such that
	
	\begin{enumerate}
		\item[(i)] $K\subset \cup_{1\leq j\leq r}I_j$ and $\cup_{1\leq j\leq r}\partial I_j\subset K$,
		
		\item[(ii)] For every $1\leq j\leq r$ we have that $\psi(I_j)$ is the convex hull of a union of $I_t$'s, for $l$ sufficiently large $\psi^l(K\cap I_j)=K$ and $$K=\bigcap_{n\geq 0}\psi^{-n}(\cup_{1\leq j\leq r}I_j).$$
	\end{enumerate}
	
\end{definition}

We say that $\mathcal{P}$ is a \textbf{Markov partition} for $K$ and we will write, when necessary, $(K,\mathcal{P},\psi)$ to understand that $K$ is defined by $\mathcal{P}$ and $\psi$.

\begin{remark}\label{rr1}
	We gave a definition of regular Cantor set from [12]. There are alternative definitions of regular Cantor sets. For instance, in [10] they defined as follows: 
	
	Let $\mathcal{A}$ be a finite alphabet, $\mathcal{B}\subset \mathcal{A}^2$, and $\Sigma$ the subshift of finite type of $\mathcal{A}^{\mathbb{Z}}$ with allowed transitions $\mathcal{B}$ which is topologically mixing, and such that every letter in $\mathcal{A}$ occurs in $\Sigma$. An expanding map of type $\Sigma$ is a map $g$ with the following properties:
	
	\begin{enumerate}
		\item[(1)] the domain of $g$ is a disjoint union $\sqcup_{\mathcal{B}}I(a,b)$, where, for each $(a,b)$, $I(a,b)$ is a compact subinterval of $I(a):=[0,1]\times {a}$;
		
		\item[(2)] for each $(a,b)\in \mathcal{B}$, the restriction of $g$ to $I(a,b)$ is a smooth diffeomorphism onto $I(b)$ satisfying $|Dg(t)|>1$, for all $t$.
	\end{enumerate}
	
	The regular Cantor set associated to $g$ is defined as the maximal invariant set
	
	$$K=\bigcap_{n\ge 0}g^{-n}\left(\sqcup_{\mathcal{B}}I(a,b)\right).$$
	
\end{remark}

\begin{remark}\label{rr2}
	These two definitions are equivalent. On one hand, we may, in the first definition take $I(j):=I_j$, for each $1\le j\le r$, and for each pair $j,k$ such that $\psi(I_j)\supset I_k$, take $I(j,k)=I_j\cap \psi^{-1}(I_k)$. Conversely, in the second definition, we could consider an abstract line containing all intervals $I(a)$ as subintervals, and $\{I(a,b);(a,b)\in \mathcal{B}\}$ as the Markov partition. Moreover we write the data $(\mathcal{A},\mathcal{B},\Sigma,g)$ defining $K$.
\end{remark}

The Markov partition $\mathcal{P}=\{I_1,I_2,...,I_r\}$ and the map $\psi$ define an $r\times r$ matrix $B=(b_{ij})_{i,j=1}^r$ by

$$b_{ij}= \left\{\begin{array}{rc}
1, &\mbox{if}\quad \psi(I_i)\supset I_j\\ 
0, &\mbox{if}\quad \psi(I_i)\cap I_j=\emptyset \end{array}\right.\\$$
which encodes the combinatorial properties of $K$. To give $\mathcal{B}$ is equivalent to give $B$. We put $\Sigma_B=\{\underline{\theta}=(\theta_1,\theta_2,...)\in \{1,2,...,r\}^{\mathbb{N}}; b_{\theta_i,\theta_{i+1}}=1, \ \forall i\ge 1\}$ and $\sigma:\Sigma_B\rightarrow \Sigma_B$ such that $\sigma(\underline{\theta})=\underline{\tilde{\theta}}$, $\tilde{\theta}_i=\theta_{i+1}$ we call the shift map. 
There is a natural homeomorphism between the pairs $(K,\psi)$ and $(\Sigma_B,\sigma)$. For each finite word $(a_1,a_2,...,a_n)$ such that $(a_i,a_{i+1})\in \mathcal{B}, 1\le i\le n-1$, the intersection 
$$I_{\underline{a}}=I_{a_1}\cap \psi^{-1}(I_{a_2})\cap ...\cap \psi^{-(n-1)}(I_{a_n})$$  
is a non-empty interval and by the mean-value theorem there is $x\in I_{\underline{a}}$ such that
$$\mbox{diam}(\psi^{n-1}(I_{\underline{a}}))=|(\psi^{n-1})'(x)|\mbox{diam}(I_{\underline{a}}),$$
but, since $\psi^{n-1}(I_{\underline{a}})\subset I_{a_n}$ we have 
$$\mbox{diam}(I_{\underline{a}})=\frac{\mbox{diam}((\psi^{n-1}(I_{\underline{a}}))}{|(\psi^{n-1})'(x)|}\le \frac{\mbox{diam}(I_{a_n})}{|(\psi^{n-1})'(x)|}$$
which is exponentially small if $n$ is large. Then, the map 
$$\begin{array}{rcl}
h:\Sigma&\rightarrow & K \\
\underline{\theta}& \mapsto& \cap_{n\ge 1}I_{(\theta_1,\theta_2,...,\theta_n)}
\end{array}$$
defines a homeomorphism between $\Sigma_B$ and $K$ such that 
$$h\circ \sigma=\psi \circ h.$$ 

We will denote $\hat{I}_{\underline{a}^1,\underline{a}^2,...,\underline{a}^k}$ the convex hull of intervals $I_{\underline{a}^1}\cup I_{\underline{a}^2}\cup ...\cup I_{\underline{a}^k}$, where $\underline{a}^j=(a^j_1,a^j_2,...,a^j_{n_j})$, $1\le j\le k$, and $\underline{a}^j$ is an \textbf{admissible} word, in the sense that if $\underline{a}^j=(a^j_1,...,a^j_{n_j})$ then $(a_i,a_{i+1})\in \mathcal{B}$  for all $1\le i\le n_j-1$.
We say that a regular Cantor set is \textbf{affine} if $\psi_{|I_j}$ is affine and onto $\hat{I}_{1,r}$, the convex hull of $I_1,...,I_r$. Let $K$ be an affine regular Cantor set. We know that (cf. \cite{PT93}, pag. 71) if $|\psi'_{|I_j}|=\lambda_j$ then the Hausdorff dimension of $K$ is the only number $d>0$ such that
$$\sum_{j=1}^r\lambda_j^{-d}=1.$$
Note that, in the case of affine regular Cantor sets, if we forget any non-extremal interval of the Markov partition $\mathcal{P}$, say $J$, we still have that $\tilde{\mathcal{P}}=\mathcal{P}-\{J\}$ is a Markov partition of another regular Cantor set, 
$$\tilde{K}=\bigcap_{n\ge  0}\psi^{-n}\left(\bigcup_{I\in \tilde{\mathcal{P}}}I\right).$$
The following example give us an idea of how to decrease the Hausdorff dimension of a regular Cantor set. 

\begin{example}Let $(K,\mathcal{P},\psi)$ be an affine regular Cantor set. We suppose that $|\psi'_{|I_j}|=\lambda_j$. We know that the Hausdorff dimension of $K$ is the number $d$ such that
	$$\sum_{j=1}^r \lambda_j^{-d}=1.$$
	So, if we forget, for example, $I_2$, the new regular Cantor set defined by $\mathcal{\tilde{P}}=\{I_1,I_3,...,I_r\}$ 
	has Hausdorff dimension $\tilde{d}$ such that
	$$\sum_{j=1,j\neq 2}^r \lambda_j^{-\tilde{d}}=1.$$
	It follows that $\sum_{j=1}^r \lambda_j^{-\tilde{d}}>1$ and therefore $d>\tilde{d}$, i.e., $HD(K)>HD(\tilde{K}).$
\end{example}

Recall that the pressure of $\psi$ and a potential $\phi$ is given by
\begin{eqnarray}\label{1}
P(\psi,\phi)=\sup\{h_{\mu}(\psi)+\int \phi \ d\mu; \mu \ \mbox{is \ an \ invariant \ measure}\}.
\end{eqnarray}
Moreover, by the Ergodic Decomposition Theorem and the Jacobson's theorem, the last supremum can be taken on the ergodic invariant measure. We say that a measure $m$ is an \textbf{equilibrium state} if the supremum is attained it in (\ref{1}). When $\psi$ is $C^{1+\alpha}$ and the potential $\phi=-s\log|\psi'|$, we know that (cf. \cite{P98}, pag. 203, thm 20.1, (2)) there exists an unique equilibrium measure and it is equivalent to the $d$-dimensional Hausdorff measure, where $d$ is the Hausdorff dimension of Cantor set defined by $\psi$.

In the next lemma, we show that if $K$ is a regular Cantor set and $J$ is an interval of the construction of $K$, i.e., there are $m\in \mathbb{N}$ and $j\in \{1,2,...,r\}$ such that $J\in \psi^{-(m-1)}(I_j)$, the Cantor set $\tilde{K}$ obtained by the omission of $J$ has Hausdorff dimension strictly smaller than the Hausdorff dimension of $K$. Precisely

\begin{lemma}\label{L1}Given $(K,\mathcal{P},\psi)$ a $C^s$-regular Cantor set, $s>1$, we denote $\mathcal{P}^m$ the set of connected components of $\psi^{-(m-1)}(I_j)$, $1\leq j\leq r$. 
	If $J\in \psi^{-(m-1)}(I_j)$ for some natural number $m$ and some $j\leq r$ such that $\mathcal{P}^m-\{J\}$ is a Markov partition of $\psi$, then the Cantor set
	$$\tilde{K}=\bigcap_{n\geq 0}\psi^{-nm}\left(\bigcup_{I\in \mathcal{P}^m, I\neq J}I\right )$$ 
	satisfies $HD(\tilde{K})<HD(K)$.
\end{lemma}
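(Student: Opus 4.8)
The plan is to reduce the statement to the thermodynamic formalism for the expanding map $\psi$ and its iterate $\psi^m$, using the fact that Hausdorff dimension of a $C^s$-regular Cantor set ($s>1$) is characterized as the unique zero of the pressure function $t\mapsto P(\psi,-t\log|\psi'|)$, and likewise $HD(\tilde K)$ is the unique zero of $t\mapsto P(\psi^m|_{\tilde K},-t\log|(\psi^m)'|)$. Since $\mathcal{P}^m$ is a Markov partition for $\psi^m$ (with the subshift of finite type encoding admissible length-$m$ words), and $K$ is the Cantor set defined by $(\mathcal{P}^m,\psi^m)$ as well, we have $HD(K)=d$ where $P(\psi^m,-d\log|(\psi^m)'|)=0$, the pressure being taken over $\sigma^m$-invariant measures supported on the full symbolic system $\Sigma_{B,m}$ of admissible $m$-blocks. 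Deleting $J$ corresponds to passing to the subsystem $\Sigma'\subsetneq \Sigma_{B,m}$ of sequences that never use the block corresponding to $J$; this is again a topologically mixing (or at least transitive, after the hypothesis that $\mathcal{P}^m-\{J\}$ is still a Markov partition) subshift of finite type, properly contained in $\Sigma_{B,m}$.

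The key step is strict monotonicity of pressure under passing to a proper subsystem. Concretely, I would show that for the value $d=HD(K)$ one has
\begin{equation*}
P\bigl(\psi^m|_{\tilde K},\,-d\log|(\psi^m)'|\bigr) < P\bigl(\psi^m,\,-d\log|(\psi^m)'|\bigr)=0 .
\end{equation*}
Then, since $t\mapsto P(\psi^m|_{\tilde K},-t\log|(\psi^m)'|)$ is continuous and strictly decreasing (because $|(\psi^m)'|>1$ uniformly, as $\psi$ is expanding), its unique zero $\tilde d=HD(\tilde K)$ must satisfy $\tilde d<d$, which is exactly the claim. To get the strict inequality, I would use the variational principle: the equilibrium state $\mu$ for the potential $-d\log|(\psi')|$ on the full system is unique and, by the result quoted from \cite{P98}, equivalent to the $d$-dimensional Hausdorff measure on $K$; in particular $\mu$ gives positive mass to the cylinder $[J]$ (it is fully supported). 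Hence $\mu$ is not supported on $\tilde K$, so the supremum defining $P(\psi^m|_{\tilde K},-d\log|(\psi^m)'|)$ is taken over a strictly smaller weak-$*$-closed set of invariant measures that does not contain the unique maximizer $\mu$; by uniqueness of the equilibrium state the supremum over that smaller set is strictly below $0$. Alternatively, and perhaps more cleanly, I would give the Bowen-type counting argument: $\sum_{\underline a \text{ admissible of }\psi^m\text{-length }n}\mathrm{diam}(I_{\underline a})^{d}$ stays bounded away from $0$ and $\infty$ (comparable to the $d$-Hausdorff measure of $K$), whereas the analogous sum restricted to words avoiding $J$ decays exponentially — a finite positive proportion of $(n)$-cylinders is killed at each application of $\psi^m$, using bounded distortion to turn "positive proportion of cylinders" into "loss of a definite multiplicative factor in the $d$-sum".

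I would organize the proof as: (1) record that $(\mathcal{P}^m,\psi^m)$ defines the same $K$ and that $HD$ is the zero of the corresponding pressure (Bowen's formula, valid in the $C^{1+\alpha}$ setting since $s>1$); (2) observe $\tilde K$ is the regular Cantor set of $(\mathcal{P}^m-\{J\},\psi^m)$, so $HD(\tilde K)$ is the zero of the pressure of the subsystem; (3) prove the strict pressure drop at $t=d$ via the full support of the equilibrium measure / the bounded-distortion counting estimate; (4) conclude by strict monotonicity of $t\mapsto P(\cdot,-t\log|(\psi^m)'|)$. The main obstacle is step (3): making rigorous that removing a single cylinder $J$ (which, after $n$ iterates of $\psi^m$, forbids an exponentially growing but also exponentially sparse collection of $n$-words) forces a strictly exponential deficit in the pressure sum. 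The cleanest route is the uniqueness-of-equilibrium-state argument already available from the quoted theorem in \cite{P98}, together with the fact that the equilibrium measure, being equivalent to a Hausdorff measure on $K$, charges every cylinder; bounded distortion (the defining property of $C^s$-regular Cantor sets, $s>1$) is what guarantees both Bowen's formula and the uniform comparability that upgrades a qualitative "$\mu([J])>0$" into the quantitative strict inequality.
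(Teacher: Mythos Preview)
Your argument is correct and shares its starting point with the paper's proof: both reduce to the case $m=1$ by passing to $\psi^m$, and both use that the equilibrium state $\mu$ for $-d\log|\psi'|$ (with $d=HD(K)$) is equivalent to the $d$-dimensional Hausdorff measure $m_d$ on $K$ and therefore charges every cylinder, in particular $J$. The finishing arguments differ, however. The paper proceeds ergodic-theoretically: by Birkhoff's theorem, $\mu$-a.e.\ point of $K$ visits the deleted interval with positive frequency, so $\mu(\tilde K)=0$, hence $m_d(\tilde K)=0$; since $\tilde K$ is itself a $C^s$-regular Cantor set, $HD(\tilde K)=d$ would force $m_d(\tilde K)>0$ (via the Ahlfors regularity quoted from \cite{PT93}), a contradiction. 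Your route stays entirely inside the thermodynamic formalism: uniqueness of the equilibrium state on the full system, together with upper semicontinuity of the free-energy functional, forces the restricted pressure $P(\psi^m|_{\tilde K},-d\log|(\psi^m)'|)$ to be strictly negative, and then strict monotonicity in $t$ plus Bowen's formula give $HD(\tilde K)<d$. Your approach leans on a bit more machinery (Bowen's formula and the attainment of the restricted supremum), whereas the paper's is more elementary and self-contained once the equivalence $\mu\sim m_d$ is granted; conversely, your argument delivers a strict pressure gap at $t=d$, which is a quantitatively stronger conclusion than the paper extracts.
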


\begin{proof}
	Note that we can write $(K,\mathcal{P},\psi)=(K,\mathcal{P}^m, \psi^m)$, in the sense that
	$$K=\bigcap_{n\geq 0}\psi^{-n}\left(\bigcup_{I\in \mathcal{P}}I\right )=\bigcap_{n\geq 0}\psi^{-mn} \left(\bigcup_{I\in \mathcal{P}^m}I\right ).$$ So, to show the lemma is equivalent to show that if we forget some interval of $\mathcal{P}$ then the Hausdorff dimension decreases. 
	
	Let $d=HD(K)$ and $m_d$ be the Hausdorff dimension of $K$ and the $d$-dimensional Hausdorff measure, respectively. We know that $m_d(K)>0$ and there exists $c>0$ such that, for all $x\in K$ and $0<r\leq 1$ (cf. \cite{PT93}, pag. 72, proposition 3),
	$$c^{-1}\leq \frac{m_d(B_r(x)\cap K)}{r^d}\leq c.$$
	Moreover, if $\mu$ denotes the unique equilibrium measure corresponding to the H\"older continuous potential $-s \log |\psi'|$, we have that $m_d$ is equivalent to $\mu$ (cf. \cite{P98}, pag. 203). 
	
	By uniqueness, $\mu$ is an ergodic invariant measure for $\psi$. Consider for $x\in K$ 
	$$\tau(I_l\cap K;x)=\lim_{n\to \infty}\frac{\#\{0\leq j\leq n-1; \psi^j(x)\in I_l\cap K\}}{n}.$$  
	From the Birkhoff's Ergodic Theorem, $\tau(I_l\cap K;x)=\mu(I_l\cap K)$ for $\mu$-almost every $x\in K$. Take $y\in I_l\cap K$ and $\tilde{I}\subset I_l$ any interval centred in $y$. Note that $\mu(I_l\cap K)>0$  because $$\displaystyle m_d(I_l\cap K)\geq m_d(\tilde{I}\cap K)\geq \frac{c^{-1}}{2^d}|\tilde{I}|^{d}>0.$$ This implies that the set of points which never visit $I_l$ has measure zero. Note that this set contains  
	$$\tilde{K}=\bigcap_{n\geq 0}\psi^{-n}(I_1\cup ...\cup I_{l-1}\cup I_{l+1}\cup...  \cup I_r).$$ Since $\mu$ is equivalent to $m_d$, we have $m_d(\tilde{K})=0.$ On the other hand, if $d=HD(\tilde{K})$ we would have $m_d(\tilde{K})>0$. So $HD(\tilde{K})<d=HD(K).$
	
\end{proof}





\begin{definition}
	For $a\in \mathcal{A}$, consider $\mathcal{B}(a)=\{b\in \mathcal{A}; (a,b)\in \mathcal{B}\}$. The \textbf{degree} of $a$ is defined by $\deg(a)=\# \mathcal{B}(a)$, as the number of elements $b\in \mathcal{A}$ such that $(a,b)\in \mathcal{B}$.
	In other words, $\deg(a)$ represents the number of intervals that are visited by the interval $I_a$. In particular, $\psi(I_a)$ is the convex hull of $\deg(a)$ intervals. In a similar way, if $\underline{a}=(a_1,...,a_n)\in \mathcal{B}_n$ we define $\deg(\underline{a}):=\deg(a_n)$.
\end{definition}

\begin{definition}\label{i1}
	We say that a Markov partition $\mathcal{P}$ has the interior property (IP) if there exists an interval $I\in \mathcal{P}$ such that for every $J\in \mathcal{P}$ with the following property: 
	\begin{equation}\label{eq21}
	\mbox{If} \  \psi(J)\supset I \ \mbox{then} \ \inte(\psi(J))\supset I,
	\end{equation} where $\inte{(A)}$ denotes the interior of a set $A$. 
\end{definition}

\begin{example}
	Let $\mathcal{P}=\{I_1,I_2,I_3\}$  be a Markov partition of a regular Cantor set $K$, defined by an expanding map $\psi$ satisfying $\psi(I_1)=\hat{I}_{1,2}$, $\psi(I_2)=\hat{I}_{2,3}$ and $\psi(I_3)=I_1$. Thus, $\mathcal{A}=\{1,2,3\}$ and $\mathcal{B}=\{(1,1),(1,2),(2,2),(2,3),(3,1)\}$. Note that $\psi(I_1)\supset I_2$, but $\inte(\psi(I_1))\cap \inte(I_2)=\inte(I_2)$.
	Then $\mathcal{P}$ does not has (IP).
\end{example}

In the above example we note that $\deg(a)\le 2$ for all $a\in \mathcal{A}$. We shall see below when there is a word $a$ with $\deg(a)=3$, easily we can to obtain a refinement $\mathcal{\tilde{P}}$ of $\mathcal{P}$ such that $\mathcal{\tilde{P}}$ has $(IP)$.

\begin{example}
	Let $\mathcal{P}=\{I_1,I_2,I_3\}$ be a Markov partition of a regular Cantor set $K$, defined by an expanding map $\psi$ satisfying $\psi(I_1)=\hat{I}_{1,2,3}$, $\psi(I_2)=\hat{I}_{2,3}$ and $\psi(I_3)=I_1$, $|I_3|<|I_1|$,. So $\mathcal{A}=\{1,2,3\}$ and $\mathcal{B}=\{(1,1),(1,2),(1,3),(2,2),(2,3),(3,1)\}$. Now, we consider $\tilde{\mathcal{P}}=\{I_{11},I_{12},I_{13},I_2,I_3\}$. Hence, in a similar way as the above example, we have that $\mathcal{\tilde{P}}$ is a Markov partition. Observe that $\deg(1)=3$ and $$I_{12}\subset \inte(I_1).$$
	Since for every $J\in \tilde{\mathcal{P}}$ we have $\psi(J)=I_a$ for some $a\in \mathcal{A}$, it follows that $$\psi(J)\supset I_{12}\Rightarrow \inte(\psi(J))\supset I_{12}.$$ 
	In this present case we have that $\tilde{\mathcal{P}}$ has $(IP)$.
\end{example}

\begin{lemma}\label{L2}
	If $(K,\mathcal{P},\psi)$ is a $C^s$-regular Cantor set such that either there is $j$ with $\deg(j)\ge 3$ or $\deg(j)=2$ for every $j$ then there exists a Markov partition $\tilde{\mathcal{P}}\prec \mathcal{P}$ for $\psi$ determining $K$ such that $\tilde{\mathcal{P}}$ has $(IP)$.
	
\end{lemma}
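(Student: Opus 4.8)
\textbf{Proof plan for Lemma~\ref{L2}.}
The goal is to produce a refinement $\tilde{\mathcal{P}}\prec\mathcal{P}$ with the interior property (IP), distinguishing the two hypotheses. The plan is to handle the case $\deg(j)\ge 3$ for some $j$ first, and then the case where every vertex has degree exactly $2$. In the first case I would imitate the second example above: pick $j_0$ with $\deg(j_0)\ge 3$, say $\psi(I_{j_0})=\hat{I}_{b_1,\dots,b_m}$ with $m=\deg(j_0)\ge 3$ where $I_{b_1},\dots,I_{b_m}$ appear in this (left-to-right) order inside $\psi(I_{j_0})$. Refine $\mathcal{P}$ by replacing $I_{j_0}$ with the $m$ subintervals $I_{j_0,b_1},\dots,I_{j_0,b_m}$ (and keeping all other $I_t$), which is a Markov partition by the same reasoning as in Remarks~\ref{rr1}--\ref{rr2}. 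I claim the "middle" interval $I:=I_{j_0,b_2}$ witnesses (IP): since $b_2$ is neither the leftmost nor the rightmost child of $j_0$, the interval $I_{j_0,b_2}$ is contained in the interior of $I_{j_0}\subset\hat I_{1,r}$, hence is bounded away from $\partial\hat I_{1,r}$. Now for any $J\in\tilde{\mathcal P}$, $\psi(J)$ is the convex hull of a union of intervals of the \emph{old} partition $\mathcal P$, so $\psi(J)\supset I_{j_0}$ whenever it contains $I$ (as $I\subset\inte I_{j_0}$ forces $J$'s image to reach across $I_{j_0}$); and once $\psi(J)\supset I_{j_0}$, since $I$ sits in $\inte I_{j_0}$ we get $I\subset\inte\psi(J)$, which is exactly \eqref{eq21}. (Care is needed: one must check $\psi(J)\supset I$ actually implies $\psi(J)\supset I_{j_0}$, using that $\psi(J)$ is a convex hull of whole intervals $I_t$ of $\mathcal P$ and $I$ is strictly interior to one such interval.)

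For the second case, where $\deg(j)=2$ for every $j$, the map is "binary" everywhere, so I would pass to a suitable power. Concretely, for $n$ large consider $\mathcal{P}^n$ (the connected components of $\psi^{-(n-1)}(I_j)$), which is a Markov partition for $\psi^n$ determining $K$ as noted in the proof of Lemma~\ref{L1}. A cylinder $I_{\underline a}$ with $\underline a=(a_1,\dots,a_n)$ admissible has $\psi^n(I_{\underline a})=\psi(I_{a_n})=\hat I_{b,b'}$ for the two children $b,b'$ of $a_n$; so every vertex of $\mathcal P^n$ still has degree $2$ and this alone does not help. Instead I would use topological mixing (equivalently, since $\psi^l(K\cap I_j)=K$ for $l$ large, the transition graph is primitive): there is a vertex $a_*$ and $n$ large with at least $3$ distinct admissible words of length $n$ ending at $a_*$, i.e. at least three cylinders $I_{\underline a^{(1)}},I_{\underline a^{(2)}},I_{\underline a^{(3)}}$ with $\psi^n$-image equal to $\hat I_{b,b'}$. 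Among these, order them inside their common ambient interval and let $I$ be one that is not leftmost and not rightmost among \emph{all} cylinders of $\mathcal P^n$ lying in the same $I_{a_1}$ (such a choice exists because degree $\ge 2$ at every vertex makes each $I_{a_1}$ contain at least two level-$n$ cylinders, and iterating the mixing condition we can arrange one extra cylinder strictly between, away from $\partial I_{a_1}\subset\partial\hat I_{1,r}$). Then, as in the first case, $I$ is strictly interior to $\hat I_{1,r}$, and for any $J\in\mathcal P^n$, $\psi^n(J)$ is a convex hull of whole intervals of $\mathcal P$, so $\psi^n(J)\supset I$ forces $\psi^n(J)$ to contain the level-$1$ interval $I_{a_1}\supset I$ and hence $I\subset\inte\psi^n(J)$, giving (IP) for $(K,\mathcal P^n,\psi^n)$.

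The main obstacle I expect is the bookkeeping in the degree-$2$ case: making precise, via the mixing/primitivity of the transition matrix $B$, that for $n$ large one can find three admissible length-$n$ words ending at a common letter and, crucially, that one of the corresponding cylinders lies strictly between two others inside a common level-one interval (so that it is interior to $\hat I_{1,r}$). This requires a careful combinatorial argument about paths in the graph of $B$ — e.g. fixing a letter $a_*$ that admits two distinct return loops of coprime-ish lengths, or simply using that $B^n$ has all entries positive for $n\ge n_0$ and that each row of $B$ has at least two nonzero entries, to build the needed branching. The geometric part (interior interval $\Rightarrow$ (IP), using that images are convex hulls of whole Markov intervals) is then routine, essentially a repetition of the computation in the two worked examples.
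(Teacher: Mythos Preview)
Your treatment of the $\deg(j)\ge 3$ case is the paper's argument. For the all-degree-$2$ case the paper takes a different route: it keeps the map $\psi$ and builds an explicit mixed-depth refinement
\[
\tilde{\mathcal P}=(\mathcal P\setminus\{I_1,I_{\ell_2},I_b\})\cup\{I_{1\ell_1},I_{1\ell_2a},I_{1\ell_2bc},I_{1\ell_2bd},I_{\ell_2a},I_{\ell_2bc},I_{\ell_2bd},I_{bc},I_{bd}\},
\]
taking $I:=I_{1\ell_2bc}$ as the (IP) witness; the point is that this depth-$4$ cylinder sits strictly inside $I_{1\ell_2}$ and inside $I_1$, while every $\psi(J)$ for $J\in\tilde{\mathcal P}$ is either a single cylinder one level shallower or a convex hull of whole level-$1$ intervals.

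Your route via $(\mathcal P^n,\psi^n)$ is viable, but a miscount sends you on an unnecessary detour. You assert that in $(\mathcal P^n,\psi^n)$ every vertex still has degree $2$; this is false. For $J=I_{\underline a}\in\mathcal P^n$ you correctly compute $\psi^n(J)=\psi(I_{a_n})=\hat I_{b,b'}$, but the degree of $\underline a$ in the \emph{new} partition counts the level-$n$ cylinders contained in $\hat I_{b,b'}$, namely all those beginning with $b$ or with $b'$ --- there are $2\cdot 2^{n-1}=2^n\ge 4$ of them as soon as $n\ge 2$. So after passing to $\psi^2$ you are already back in Case~1, and the mixing argument, together with the hunt for three words sharing a terminal letter, is superfluous. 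Your closing verification (a cylinder $I$ that is neither leftmost nor rightmost inside its level-$1$ parent $I_{a_1}$ satisfies $I\subset\inte I_{a_1}$; since $\psi^n(J)$ is a convex hull of whole level-$1$ intervals, $\psi^n(J)\supset I$ forces $I_{a_1}\subset\psi^n(J)$ and hence $I\subset\inte\psi^n(J)$) is correct and is exactly the Case-1 mechanism. The only residual discrepancy is that you obtain (IP) for the iterate $\psi^n$ rather than for $\psi$ as the lemma literally requests; this is harmless for the downstream use (compare the paper's own replacement of $\psi$ by $T(\psi)$ in Lemma~\ref{Lema de pert} and the sentence following it), but you should flag the substitution explicitly.
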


\begin{proof}
	We begin supposing that there is $j\in \mathcal{A}$ such that $\mbox{deg}(j)\ge 3$. Thus, there is $l(j)$ such that $\mbox{int}(I_j)\supset I_{jl(j)}$.
	\begin{center}
		
		\begin{figure}[!h]
			\includegraphics[trim=0.5cm 8.5cm 2cm 1.3cm]{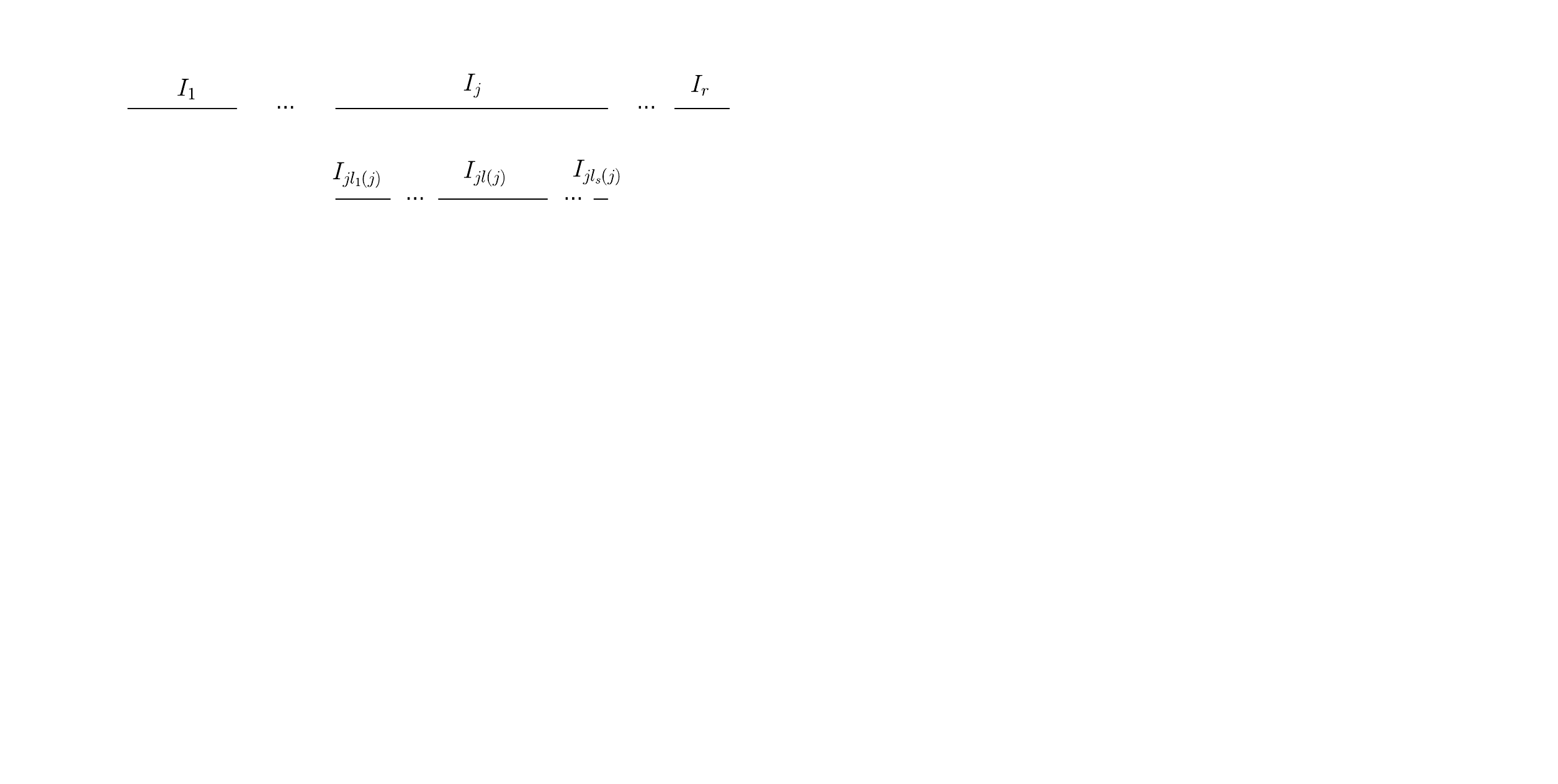}
			\caption{arrangement of intervals}
			\label{fig.1}
		\end{figure}
		
	\end{center}
	Consider $\tilde{\mathcal{P}}=(\mathcal{P}-\{I_j\})\cup (\cup_{b\in \mathcal{B}(j)}I_{jb}).$ 
	It is easy to see that $\tilde{\mathcal{P}}$ is a Markov partition to $\psi$. If $b\neq j$ then $\psi(I_{jb})=I_b$ and $I_b\in \tilde{\mathcal{P}}$, because $b\neq j$ . If $j\in \mathcal{B}(j)=\{l_1,...,l_s\}$ then $\psi(I_{jj})=I_j=\hat{I}_{jl_1,...jl_s}$. For every $I_a\in \tilde{\mathcal{P}}$ with $I_a\in \mathcal{P}$, the convex hull does not change, so $\psi(I_a)$ is the convex hull of elements in $\tilde{\mathcal{P}}$ because if $\psi(I_a)\supset I_j$ then we use that $I_{j}=\hat{I}_{jl_1,...,jl_s}$.   Moreover, if $\psi(I_a)\supset I_{jl(j)}$ then $\mbox{int}(\psi(I_a))\supset I_{jl(j)},$ because $\psi(I_a)\cap I_{jl(j)}\neq \emptyset$ implies that $\psi(I_a)\cap I_j\neq \emptyset$ and so $\psi(I_a)\supset I_j$. But $\inte(I_j)\supset I_{jl(j)}$, thus $\tilde{\mathcal{P}}$ is the Markov partition in the statement with $I=I_{jl(j)}$.
	
	Next we suppose that $\deg(j)=2$ for every $j\le \# \mathcal{P}$. In particular, $\deg(1)=2$ and there are $a$ and $b$ such that $\hat{I}_{1\ell_1,1\ell_2}=I_1$. There is no loss of generality suppose that $\ell_2\neq 1$. Since $\deg(\ell_2)=2$ there are $a,b$ such that $\hat{I}_{1\ell_2a,1\ell_2b}=I_{1\ell_2}$ and there are $c,d$ such that $\hat{I}_{1\ell_2bc,1\ell_2bd}=I_{1\ell_2b}$, see figure \ref{fig2}. 	\begin{center}
	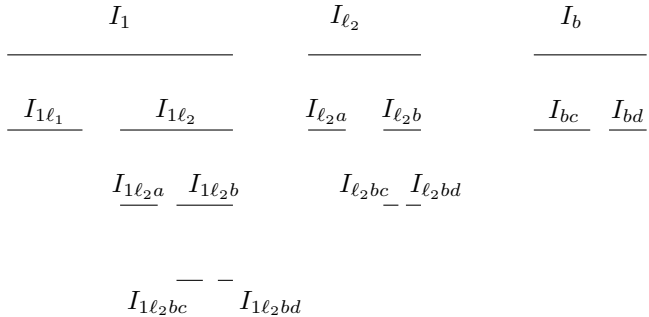
\begin{figure}\label{fig2}
	\begin{tikzpicture}
	\draw (-3,0) -- (0,0);
	\draw (-1.5,0.5) node {$I_1$};
	\draw (-3,-1.0) -- (-2,-1.0);
	\draw (-2.5,-0.75) node {$I_{1\ell_1}$};
	\draw ( -1.5,-1.0) -- (0,-1.0);
	\draw (-0.75,-0.75) node {$I_{1\ell_2}$};
	\draw (-1.5, -2) -- (-1,-2);
	\draw (-1.25,-1.75) node {$I_{1\ell_2a}$};
	\draw (-0.75,-2) -- (0,-2);
	\draw (-0.25,-1.75) node {$I_{1\ell_2b}$};
	\draw (-0.75,-3) -- (-0.4,-3);
	\draw (-1,-3.3) node {$I_{1\ell_2bc}$};
	\draw (-0.2,-3) -- (0,-3);
	\draw (0.5,-3.3) node {$I_{1\ell_2bd}$};
	\draw (1,0) -- ( 2.5,0);
	\draw (1.5,0.5) node {$I_{\ell_2}$};
	\draw (1,-1)--(1.5,-1);
	\draw (1.25,-0.75) node {$I_{\ell_2a}$};
	\draw (2,-1) -- (2.5,-1);
	\draw (2.25,-0.75) node {$I_{\ell_2b}$};
	\draw (2,-2)--(2.2,-2);
	\draw (1.75,-1.75) node {$I_{\ell_2bc}$};
	\draw (2.3,-2) -- (2.5,-2);
	\draw (2.7,-1.75) node {$I_{\ell_2bd}$};
	\draw (4,0) -- (5.5,0);
	\draw (4.5, 0.5) node {${I_b}$}; 
	\draw (4,-1) -- (4.75,-1);
	\draw (4.4,-0.75) node {$I_{bc}$};
	\draw (5,-1) -- (5.5, -1);
	\draw (5.25,-0.75) node {$I_{bd}$};
	\end{tikzpicture}
	\caption{Configuration of the intervals in the preserving orientation case}
	\end{figure}
	\end{center}
	Consider 
	$$\tilde{\mathcal{P}}=(\mathcal{P}\setminus \{I_1,I_{\ell_2},I_b\})\cup \{I_{1\ell_2bc},I_{1\ell_2bd},I_{1\ell_2a},I_{\ell_2bc},I_{\ell_2bd},I_{1\ell_1},I_{\ell_2a},I_{bc},I_{bd}\}.$$
Then $\tilde{\mathcal{P}}$ is the Markov partition of the statement and $I:=I_{1\ell_2bc}$ is the interval such that if $\psi(J)\supset I$ then $\inte(\psi(J))\supset I$.	
\end{proof}

\begin{remark}
Thanks the lemma \ref{L2} if we forget $I$, that is, if we consider $\mathcal{P}_0=\mathcal{P}-\{I\}$, then $\mathcal{P}_0$ is still a Markov partition of $\psi$, and thanks the lemma (\ref{L1}) the Cantor set 
$$K_0=\bigcap_{n\ge 0}\psi^{-n}\left(\bigcup_{J\in \mathcal{P}_0}J\right)$$
has Hausdorff dimension strictly smaller than $K$. 

\end{remark}

\begin{remark}\label{re 2}
Note that the property of the above lemma is a $C^0$-open property. We use this one in the next section. 
\end{remark}

Next, we define when two regular Cantor sets, $K$ and $K'$ are $C^s$-close.

\begin{definition}
Given a $C^s$, $s\ge 1$, regular Cantor set, $K$, with a Markov partition $\mathcal{P}$, we say that another $C^s$ regular Cantor set, $K'$, with a Markov partition $\mathcal{P}'$, is $C^s$ close to $K$ if $\#\mathcal{P}=\#\mathcal{P}'$, the extremal points of any element $P_i\in \mathcal{P}$ is close to the extremal points of $P_i'\in \mathcal{P}'$ and maps $\psi$ and $\psi'$ are close in the $C^s$ topology.
\end{definition}

\begin{example}
Consider the intervals $I_1,I_2,I_3, I_4$ such that $|I_1|<|I_3|$, $|I_2|<|I_3|<|i_4|$. Let $\psi: I_1\cup I_2 \cup I_3\cup I_4\rightarrow \hat{I}_{1,4}$ such that $\psi(I_1)=I_3$, $\psi(I_2)=I_3$, $\psi(I_3)=i_4$ and $\psi(I_4)=\hat{I}_{1,4}$ in the affine and increasing way. Therefore $\psi'>1$. We have $\deg(1)=\deg(2)=\deg(3)=1$ and $\deg(4)=4$. We define 
$$T(\psi):I_1\cup I_2 \cup I_3\cup I_4\rightarrow \hat{I}_{1,4}$$ by
	\begin{equation}\label{eq1.exampleT}
T(\psi)|_{I_1}=\psi^3=\psi|_{I_4}\circ \psi|_{I_3}\circ \psi|_{I_1}
	\end{equation}
	\begin{equation}\label{eq2.exampleT}
T(\psi)|_{I_2}=\psi^3=\psi|_{I_4}\circ \psi|_{I_3}\circ \psi|_{I_2}
	\end{equation}
	\begin{equation}\label{eq.exampleT}
T(\psi)|_{I_1}=\psi^3=\psi|_{I_4}\circ \psi|_{I_3}\circ \psi|_{I_1}
	\end{equation}
	\begin{equation}\label{eq.exampleT}
T(\psi)|_{I_3}=\psi^2=\psi|_{I_4}\circ \psi|_{I_3}
	\end{equation}
	\begin{equation}\label{eq.exampleT}
T(\psi)|_{I_4}=\psi=\psi|_{I_4}.
	\end{equation}
Note that by definition $T(\psi)|_{I_j}(I_j)=\hat{I}_{1,4}$ and $\deg(j)=4$ for every $j\in \{1,2,3,4\}$. We also have $T(\psi)'>1$.  Consider 
$$K=\bigcap_{n\ge 0}\psi^{-n}(I_1\cup I_2\cup I_3\cup I_4) \ \ \mbox{and} \ \ \tilde{K}=\bigcap_{n\ge 0}T(\psi)^{-n}(I_1\cup I_2\cup I_3\cup I_4).$$
\begin{claim}
$K=\tilde{K}$.
\end{claim}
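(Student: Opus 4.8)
The plan is to show that $\psi$ and $T(\psi)$ generate the same maximal invariant set by exhibiting, for each point, the equivalence of the two symbolic codings. First I would observe that, by construction, $T(\psi)$ is a ``return-type'' acceleration of $\psi$: on each $I_j$ the map $T(\psi)|_{I_j}$ equals an iterate $\psi^{n_j}$, namely $n_1=n_2=n_3=$ (with the obvious reading of the displayed equations) $3$ on $I_1$ and $I_2$, $2$ on $I_3$, and $1$ on $I_4$, and in each case $T(\psi)^{}|_{I_j}(I_j)=\hat I_{1,4}$, the full convex hull. Concretely, the identities $\psi(I_1)=\psi(I_2)=I_3$, $\psi(I_3)=I_4$, $\psi(I_4)=\hat I_{1,4}$ say exactly that the only $\psi$-itinerary leaving $I_1$ is $I_1\to I_3\to I_4\to\hat I_{1,4}$, leaving $I_2$ is $I_2\to I_3\to I_4\to\hat I_{1,4}$, and leaving $I_3$ is $I_3\to I_4\to\hat I_{1,4}$; so composing $\psi$ along these forced blocks is precisely what produces $T(\psi)$. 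Hence a point survives one application of $T(\psi)$ staying in $I_1\cup I_2\cup I_3\cup I_4$ if and only if it survives the corresponding block of applications of $\psi$ staying in $I_1\cup I_2\cup I_3\cup I_4$, because the intermediate intervals visited by the $\psi$-orbit ($I_3$, $I_4$) are themselves members of the partition.

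The key step is to turn this into an equality of infinite intersections. I would argue both inclusions. For $K\subset\tilde K$: if $x\in K$, then $\psi^n(x)\in I_1\cup\dots\cup I_4$ for all $n\ge 0$; grouping the $\psi$-iterates into the forced blocks above shows $T(\psi)^m(x)$ is a well-defined point of $I_1\cup\dots\cup I_4$ for all $m\ge 0$, whence $x\in\tilde K$. For $\tilde K\subset K$: if $x\in\tilde K$, then for each $m$ the point $T(\psi)^m(x)\in I_{j_m}$ for some $j_m$, and since $T(\psi)|_{I_{j_m}}=\psi^{n_{j_m}}$ with all intermediate $\psi$-images lying in the partition (they are $I_3$ and/or $I_4$), every $\psi$-iterate of $x$ also lies in $I_1\cup\dots\cup I_4$; thus $x\in K$. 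The bookkeeping amounts to checking that the map from $\psi$-itineraries in $\{1,2,3,4\}^{\mathbb N}$ to $T(\psi)$-itineraries, obtained by reading off the blocks $(1,3,4)$, $(2,3,4)$, $(3,4)$, $(4)$, is a bijection onto all of $\{1,2,3,4\}^{\mathbb N}$ — which it is, since after any symbol the next allowed symbol for $T(\psi)$ is arbitrary (each $\deg(j)=4$), matching the fact that after completing a forced $\psi$-block one lands in $\hat I_{1,4}$ and may continue into any $I_i$.

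The main obstacle I anticipate is purely notational rather than conceptual: one must be careful that the displayed definition of $T(\psi)$ in \eqref{eq1.exampleT}--\eqref{eq.exampleT} contains a typo (the line for $I_1$ is repeated and no line is given that is visibly ``$T(\psi)|_{I_2}$ alone''), so I would first restate cleanly that $T(\psi)|_{I_1}=T(\psi)|_{I_2}=\psi|_{I_4}\circ\psi|_{I_3}\circ\psi|_{I_1\text{ resp. }I_2}$, $T(\psi)|_{I_3}=\psi|_{I_4}\circ\psi|_{I_3}$, $T(\psi)|_{I_4}=\psi|_{I_4}$, and verify that these are consistently defined on the overlaps of domains (there are none, the $I_j$ are disjoint) and that each image is exactly $\hat I_{1,4}$. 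Once this normalization is in place, the inclusion argument above is routine and the Claim follows. No result beyond the elementary description of maximal invariant sets of expanding maps (as in the definition of regular Cantor set recalled above) is needed.
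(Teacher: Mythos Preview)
Your proposal is correct and follows essentially the same approach as the paper: both arguments prove the two inclusions by observing that $T(\psi)$ is built from iterates of $\psi$ whose intermediate images lie in $I_3$ or $I_4$, so a point survives all $T(\psi)$-iterates in the partition if and only if it survives all $\psi$-iterates there. The paper carries this out by an explicit case analysis on which $I_j$ contains $x$, whereas you phrase the same computation in terms of forced itinerary blocks, but the content is identical.
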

Note that $x\in K$ if and only if $\psi^n(x)\in I_1\cup I_2\cup I_3\cup I_4$. While $x\in \tilde{K}$ if and only if $T(\psi)^n\in I_1\cup I_2\cup I_3\cup I_4$. Then, it follows from the definition of $T(\psi)$ that $K\subset \tilde{K}.$ 

On the other hand, take $x\in \tilde{K}$. 

\begin{enumerate}

\item[(i)] If $x\in I_1\cup I_2$ then for $i=1$ or $2$ we have 
	$$T(\psi)(x)=\psi|_{I_4}\circ \psi|_{I_3}\circ \psi|_{I_i}(x)\in I_1\cup I_2\cup I_3\cup I_4.$$
	That is, 
	$$\psi(x)=\psi|_{I_i}(x)\in (\psi|_{I_3})^{-1}\circ (\psi|_{I_4})^{-1}(I_1\cup I_2\cup I_3\cup i_4)=(\psi|_{I_3})^{-1}(I_4)=I_3.$$
	 In particular, $\psi^2(x)=\psi|_{I_3}\circ \psi|_{I_i}(x)\in I_4$.  
	 By definition, 
	 $$\psi^3(x)=T(\psi)(x)\in I_1\cup I_2\cup I_3\cup I_4.$$
	 Following the above argument, $\psi^n(x)\in I_1\cup I_2\cup I_3\cup I_4$ for all $n\ge 0.$
\item[(ii)] If $x\in I_3$ then $T(\psi)(x)=\psi^2(x)=\psi|_{I_4}\circ \psi|_{I_3}(x)\in I_1\cup I_2\cup I_3\cup I_4.$ In particular, 
$$\psi(x)=\psi|_{I_3}(x)\in (\psi|_{I_4})^{-1}(I_1\cup I_2\cup I_3\cup i_4)=I_4.$$

\item[(iii)] If $x\in I_4$ then $T(\psi)(x)=\psi(x)\in I_1\cup I_2\cup I_3\cup I_4$.
\end{enumerate}
By (i), (ii) and (iii), $\psi^n(x)\in I_1\cup I_2\cup I_3\cup I_4$ for all $n\ge 0$. That is, $x\in K$. This proves the claim.

Next, we take $\tilde{\phi}:\tilde{I_1}\cup \tilde{I_2}\cup \tilde{I_3}\cup \tilde{I_4}\rightarrow \hat{\tilde{I}}_{1,4}$ close to $T(\psi)$ in the $C^s$ topology and such that the extremal points of $I_j$ are close to the extremal point of $I_j$. In particular, $K\sim K_{\tilde{\phi}}$ in the $C^s$ topology. Let $J_i=I_i\cap \tilde{I}_i$. We want to find $\phi\sim \psi$ in the $C^s$ topology such that $T(\phi)=\tilde{\phi}$. An algorithm to do that is the following. First of all, we know that $T(\psi)|_{I_4}=\psi|_{I_4}$. By hypotesis, $\psi|_{J_4}\sim \tilde{\phi}|_{J_4}$. Then, we put, $\phi|_{\tilde{I}_4}=\tilde{\phi}|_{\tilde{I}_4}$. In the following step we look for $T(\psi)|_{I_{3}}=\psi|_{I_4}\circ \psi|_{I_3}$. Since $T(\psi)|_{J_3}\sim \tilde{\phi}|_{J_3}$, that is, $\psi|_{J_4}\circ \psi|_{J_3}\sim \tilde{\phi}|_{J_3}$, in particular, $\psi|_{J_3}\sim (\tilde{\phi}|_{J_4})^{-1} \circ \tilde{\phi}|_{J_3}$ we put $\phi|_{\tilde{I}_3}=(\tilde{\phi}|_{\tilde{I_4}})^{-1}\circ \tilde{\phi}|_{\tilde{I_3}}.$ The other two step are clear. For $i=1,2$ we begin with $T(\psi)|_{I_i}=\psi|_{I_4}\circ \psi|_{I_3}\circ \psi|_{I_i}\sim \tilde{\phi}|_{J_i}$. Then, $\psi|_{I_3}\circ \psi|_{I_i}\sim (\tilde{\phi}|_{\tilde{I_4}})^{-1}\circ \tilde{\phi}|_{J_i}$. But, we know that $\psi|_{J_3}\sim (\tilde{\phi}|_{J_4})^{-1} \circ \tilde{\phi}|_{J_3}$. This implies that 
$$(\tilde{\phi}|_{J_4})^{-1} \circ \tilde{\phi}|_{J_3}\circ \psi|_{I_i}\sim \psi|_{I_3}\circ \psi|_{I_i}\sim (\tilde{\phi}|_{\tilde{I_4}})^{-1}\circ \tilde{\phi}|_{J_i}.$$ That is, $\phi|_{\tilde{I}_i}:=(\tilde{\phi}|_{\tilde{I}_3})^{-1}\circ \tilde{\phi}|_{\tilde{I}_{i}}\sim \psi|_{I_i}$. By construction we have that
$T(\phi)=\tilde{\phi}$.

\end{example}

Following the above example we can prove the Lemma below. Since the proof follows the same ideas as the above example but more longer, we omit here.

\begin{lemma}\label{Lema de pert}
Fix $\psi:I_1\cup ... \cup I_r\rightarrow \hat{I}_{1,r}$ such that $\psi$ defines a $C^s$ regular Cantor set $K$ such that $\deg(j)\le 2$. For $j\le r$ consider 
$$m_j=\min\{n; \psi^n(I_j) \mbox{cover two intervals of} \ \mathcal{P} \}.$$
 We define $T(\psi)$ by $T(\psi)|I_j=\psi^{m_j}$. Then $K_{\psi}=K_{T(\psi)}$. Moreover, If $\tilde{\phi}$ is $C^{\tilde{s}}$ close to $T(\psi)$ then there is $\phi$ such that $\phi$ is $C^{\tilde{s}}$ close to $\psi$ and $T(\phi)=\tilde{\phi}$.

\end{lemma}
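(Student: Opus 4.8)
\textbf{Proof proposal for Lemma \ref{Lema de pert}.}

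The plan is to generalize the construction and the perturbation algorithm exhibited in the preceding example to an arbitrary regular Cantor set $K$ defined by $\psi:I_1\cup\dots\cup I_r\to\hat I_{1,r}$ with $\deg(j)\le 2$ for all $j$. First I would verify that $T(\psi)$ is well defined: since $\psi$ is expanding and $K$ is not a single point, for each $j$ the iterates $\psi^n(I_j)$ eventually cover at least two elements of $\mathcal{P}$ (otherwise the forward orbit of $I_j$ would stay, at the level of the itinerary, inside a "linear chain'', contradicting the mixing/finiteness hypotheses built into the definition of a regular Cantor set), so $m_j<\infty$ and $T(\psi)|_{I_j}:=\psi^{m_j}$ maps $I_j$ onto the convex hull of the intervals it covers, which has degree $\ge 2$; one checks $T(\psi)$ is still $C^s$ and expanding (it is a composition of the expanding branches of $\psi$). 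The identity $K_\psi=K_{T(\psi)}$ is then proved exactly as the Claim in the example: the inclusion $K_\psi\subset K_{T(\psi)}$ is immediate because each branch of $T(\psi)$ is an iterate of $\psi$; for the reverse inclusion one argues that if $x\in K_{T(\psi)}$ then, decoding the word of length $m_j$ used in the branch $T(\psi)|_{I_j}$, all intermediate images $\psi(x),\dots,\psi^{m_j-1}(x)$ are forced to lie in the unique interval of $\mathcal{P}$ prescribed by the chain (because those intervals have degree $1$, the preimage is uniquely determined), and then inductively $\psi^n(x)\in\bigcup_i I_i$ for all $n\ge 0$, i.e. $x\in K_\psi$.

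The substantive half is the perturbation statement. Given $\tilde\phi$ that is $C^{\tilde s}$-close to $T(\psi)$ (with nearby endpoints), I want to produce $\phi$ that is $C^{\tilde s}$-close to $\psi$ with $T(\phi)=\tilde\phi$. The idea is to reconstruct the branches of $\phi$ from those of $\tilde\phi$ by "peeling off'' the composition defining $T$, processing the intervals in order of the length of their defining chains. Concretely, order the branches so that whenever $T(\psi)|_{I_j}=\psi|_{I_{c_{m_j}}}\circ\cdots\circ\psi|_{I_{c_1}}$ (with $c_1=j$), all the intervals $I_{c_2},\dots,I_{c_{m_j}}$ appearing strictly later in the chain have already been assigned a value of $\phi$; this is possible because each such $I_{c_k}$ ($k\ge 2$) has degree $1$ and its chain is a proper "tail'' of the chain of $I_j$, so the dependency relation is a forest and can be resolved by induction on chain length. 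For a degree-$\ge 2$ interval $I_j$ (chain of length $m_j$), having already defined $\phi$ on $I_{c_2},\dots,I_{c_{m_j}}$, set
$$\phi|_{\tilde I_j}:=\big(\phi|_{\tilde I_{c_{m_j}}}\circ\cdots\circ\phi|_{\tilde I_{c_2}}\big)^{-1}\circ\tilde\phi|_{\tilde I_j},$$
which makes sense because the already-defined partial composition is a $C^{\tilde s}$ diffeomorphism onto a neighbourhood of $I_{c_{m_j}}$'s image, hence invertible, and one checks it is $C^{\tilde s}$-close to $\psi|_{I_j}$ using that $\tilde\phi|_{\tilde I_j}$ is close to $T(\psi)|_{I_j}=\psi|_{I_{c_{m_j}}}\circ\cdots\circ\psi|_{I_j}$ and, inductively, each $\phi|_{\tilde I_{c_k}}$ is close to $\psi|_{I_{c_k}}$. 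Finally, by construction $T(\phi)|_{I_j}=\phi|_{\tilde I_{c_{m_j}}}\circ\cdots\circ\phi|_{\tilde I_j}=\tilde\phi|_{\tilde I_j}$, after checking that the combinatorics (the values $m_j$ and the chains $c_k$) are unchanged under a small enough perturbation, which holds by Remark \ref{re 2}-type $C^0$-openness of the relevant covering relations.

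I expect the main obstacle to be purely bookkeeping: setting up the partial order on branches correctly and checking that "resolve in order of chain length'' terminates and covers every branch exactly once — equivalently, showing that the directed graph whose vertices are the $I_j$ and whose edges record "appears in the chain of'' is acyclic, which follows from expansion (a cycle would force a branch of $T(\psi)$ to be an iterate $\psi^N$ of $\psi$ restricted to an interval that maps over itself without ever covering two intervals, impossible once $N\ge m_j$). The analytic estimates — that composition and inversion of $C^{\tilde s}$ maps depend continuously on the maps, uniformly on the relevant compact domains, so that "close'' propagates through the finitely many steps — are standard and I would only indicate them, exactly as the example does. For this reason, and since the example already contains the complete argument in the representative case $r=4$ with one chain of length $3$, one chain of length $2$ and one of length $1$, I would, as the authors do, state the lemma and refer back to the example rather than write out the general induction in full.
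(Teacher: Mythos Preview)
Your proposal is correct and follows exactly the approach the paper intends: the paper itself omits the proof entirely, stating only that it ``follows the same ideas as the above example but more longer,'' and your inductive peeling-off algorithm (process branches in order of increasing chain length, define $\phi|_{\tilde I_j}$ by inverting the already-constructed tail of the composition) is precisely the general form of that example's construction. One small slip worth noting: in your chain $c_1,\dots,c_{m_j}$ the final interval $I_{c_{m_j}}$ has degree $\ge 2$, not degree $1$ as you write---but this does not affect your induction, since what you actually need (and what holds) is that its chain has length $1<m_j$ and so has already been processed.
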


The Lemmas \ref{L2} and \ref{Lema de pert} we can assume that our dynamics has a Markov partition with the Interior Property.


We remember that if we have a regular Cantor set $K$ with a Markov partition $\mathcal{P}=\{P_1,...,P_r\}$ we can define $\mathcal{P}^n$ as being the connected component of $\psi^{-(n-1)}(I_j)$, $1\le j\le r$. Moreover, if we write
$$\lambda_{n,P}=\inf_{x\in P}|(\psi^n)'(x)|, \ P\in \mathcal{P}^n$$
and define $\beta_n$ implicitly by
$$\sum_{P\in \mathcal{P}^n}\lambda^{-\beta_n}_{n,P}=1$$
then $\beta_n\ge HD(K)$ for all $n\in \mathbb{N}$ and
\begin{equation}\label{hd}
\lim_{n\to \infty}\beta_n=HD(K).
\end{equation}
see \cite{PT93} pp 68-71.
\begin{proposition}\label{P1}
Given any $C^{\omega}$-regular Cantor set $(K,\mathcal{P},\psi)$ there exist $C^{k}$-arbitrarily close to it, $k\in \mathbb{N}$, another $C^{\omega}$-regular Cantor set $(\tilde{K},\tilde{\mathcal{P}},\tilde{\psi})$ with

$$HD(\tilde{K})\neq HD(K).$$
\end{proposition}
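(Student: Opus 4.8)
The plan is to produce the nearby Cantor set with different Hausdorff dimension by a small perturbation that ``breaks'' the combinatorics in a controlled way, following the philosophy of Lemma~\ref{L1}. First I would apply Lemmas~\ref{L2} and~\ref{Lema de pert}: after passing to a suitable iterate and refinement, we may assume without loss of generality that $(K,\mathcal{P},\psi)$ is presented by a Markov partition with the interior property (IP). Fix the interval $I\in\mathcal{P}$ provided by (IP). The key point of (IP) is that $I$ sits in the interior of $\psi(J)$ for every $J\in\mathcal{P}$ with $\psi(J)\supset I$; this gives room to move.

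Next I would construct the perturbation $\tilde\psi$. Keep the combinatorial matrix $B$ the same, so $\tilde{\mathcal{P}}$ has the same number of intervals and the same allowed transitions, and $(K,\mathcal{P},\psi)$ is $C^k$-close to $(\tilde K,\tilde{\mathcal{P}},\tilde\psi)$. The idea is to modify $\psi$ near the preimages of $I$ so that, for the new map, the interval $I$ is no longer fully covered — i.e., $\tilde\psi(\tilde J)$ only overlaps a proper subinterval of $I$ for each relevant $\tilde J$, while leaving $\tilde\psi$ equal to $\psi$ far from these preimages. Because of (IP) there is a positive amount of slack, so such a modification can be made with $C^k$-norm as small as we like (we only need to shrink the image slightly, not to change its position dramatically). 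After the perturbation, $\tilde\psi(\tilde J)$ still covers a union of elements of $\tilde{\mathcal{P}}$ except that the element $I$ is shrunk; concretely I would arrange that the new Cantor set $\tilde K$ is contained in the Cantor set obtained from $\psi$ by deleting the interval $I$ and its $\psi$-preimages, up to a bounded error. The cleanest route is to instead phrase $\tilde\psi$ so that $\tilde K$ itself misses a definite neighbourhood (inside $I$) of an interior point of $I$, so that $\tilde K$ is a proper regular subset; then compare dimensions.

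To compare Hausdorff dimensions I would use the estimate $(\ref{hd})$, i.e. $\beta_n\to HD(K)$ where $\beta_n$ is defined implicitly by $\sum_{P\in\mathcal{P}^n}\lambda_{n,P}^{-\beta_n}=1$. For $\tilde\psi$, the corresponding sum at level $n$ omits (or strictly shrinks, hence makes the contraction constants $\tilde\lambda_{n,P}$ strictly larger on) the blocks passing through the modified interval; since at least one term is genuinely removed or the $\lambda$'s are increased by a definite factor on a positive-proportion set of blocks, we get $\sum_{P\in\tilde{\mathcal{P}}^n}\tilde\lambda_{n,P}^{-\beta_n}<1$ for the value $\beta_n=HD(K)$-neighborhood, forcing $\tilde\beta_n<\beta_n$ uniformly, and passing to the limit $HD(\tilde K)<HD(K)$. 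This is exactly the mechanism of the Example before Lemma~\ref{L1} and of Lemma~\ref{L1} itself; the only novelty here is that we realize the ``forgotten interval'' Cantor set as a genuine nearby regular Cantor set $\tilde K$ rather than as a sub-Cantor-set of the same $\psi$.

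The main obstacle is the bookkeeping that ensures $\tilde\psi$ is a bona fide expanding map of the same combinatorial type, is $C^k$- (indeed $C^\omega$-) close to $\psi$, and yet genuinely decreases the relevant pressure/dimension sum. In particular one must check that shrinking the image of the branches hitting $I$ does not accidentally destroy other required coverings (this is where (IP) is used: the slack guarantees all other coverings $\tilde\psi(\tilde J)\supset I_t$ are preserved), and that the resulting map can be taken analytic — this is handled by choosing the perturbation from an analytic family (e.g. composing with a small analytic diffeomorphism of the ambient line supported near the preimages of $I$). Once these are in place, the dimension drop follows formally from $(\ref{hd})$ together with the strict inequality produced exactly as in the displayed Example.
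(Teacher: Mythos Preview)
Your construction of $\tilde\psi$ is internally inconsistent. You insist on keeping the transition matrix $B$ unchanged, yet you want ``$\tilde\psi(\tilde J)$ only overlaps a proper subinterval of $I$'' for each branch $J$ hitting $I$. If the image of $\tilde J$ no longer contains the Markov interval $I$, the transition $J\to I$ is destroyed and the combinatorics \emph{does} change; if instead you shrink $I$ itself to that subinterval, you must also say what $\tilde\psi$ does on the new, smaller $I$---and then $\tilde\psi(I)$ will in general fail to cover what it should. Either way the object you describe is not a regular Cantor set of the same combinatorial type, and the later comparison with $K_0$ (the ``forgotten $I$'' set) has no clear meaning: a $C^k$-small perturbation of $\psi$ produces a Cantor set close to $K$, not one contained in $K_0$.

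Even granting a coherent construction (say, the one the paper actually uses: replace $I_l$ by the shorter interval $I_l^\lambda$ and set $\psi_\lambda|_{I_l^\lambda}=\psi\circ h_\lambda$ with $h_\lambda$ the affine stretch), your dimension argument does not go through. For a $C^k$-small perturbation the derivatives change only by a factor $1+o(1)$, so there is no ``definite factor'' increase on any block; and the cylinders $I_{\underline b}$ for $\psi$ and for $\tilde\psi$ are \emph{different} intervals, so comparing $\tilde\lambda_{n,\underline b}$ with $\lambda_{n,\underline b}$ term by term is not legitimate. Even if you had $\tilde\beta_n<\beta_n$ for each $n$, you would still need a uniform gap to conclude $\lim\tilde\beta_n<\lim\beta_n$, and none is produced. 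This is exactly the difficulty the paper circumvents: rather than proving a strict drop for one nearby map, it builds the whole analytic family $\{\psi_\lambda\}_{\lambda\in(0,1]}$, invokes Ruelle's theorem to get that $\lambda\mapsto HD(K_\lambda)$ is real-analytic, and then only needs to show this function is \emph{nonconstant}. That is done by a contradiction using the limit $\lambda\to 0$, where the contributions of all words passing through $I_l$ vanish and one is forced down to $HD(K_0)<HD(K)$ via the $\beta_n$-sums and Lemma~\ref{L1}. The analyticity step is the missing idea in your proposal.
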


\begin{proof}
By Lemma \ref{L2} we can suppose that $\mathcal{P}$ has the $(IP)$. Let $I_l=[a_l,b_l]$ be the set of $\mathcal{P}$ as the above lemma. For $\lambda\in (0,1)$ let us define, $b^{\lambda}_l=(1-\lambda)a_l+\lambda b_l$, $I^{\lambda}_l=[a_l,b^{\lambda}_l]$, $h_{\lambda}:I^{\lambda}_l\rightarrow I_l$, by $\displaystyle h_{\lambda}(x)=\frac{x-(1-\lambda)a_l}{\lambda}$ and $\psi_{\lambda}$ in the following way:

$$\psi_{\lambda}(x)=\left\{\begin{array}{rc}
\psi(h_{\lambda}(x)),&\mbox{if}\quad x\in I^{\lambda}_l\\ 
\psi(x), &\mbox{otherwise}\end{array}\right.$$

Note that $\psi_{\lambda}(a_l)=\psi(a_l)$ and $\psi_{\lambda}(b^{\lambda}_l)=\psi(b_l)$ and therefore $\psi_{\lambda}(I^{\lambda}_l)=\psi(I_l)$. It follows that $\psi_{\lambda}(J)=\psi(J)$ for every $J\in \mathcal{P}.$ Moreover, $|\psi'_{\lambda}|>\frac{1}{\lambda}|\psi'|$ on $I^{\lambda}_l$. In particular, $\inf |\psi'_{\lambda}|\to +\infty$ when $\lambda\to 0$. Consider $\mathcal{P}^{\lambda}=(\mathcal{P}-\{I_l\})\cup \{I^{\lambda}_l\}.$ We claim that $\mathcal{P}^{\lambda}$ is a Markov partition to $\psi_{\lambda}.$ We need only to prove that $\psi_{\lambda}(I_s)$ is the convex hull of a union of $I_t$'s. Let $\mathcal{S}$ be the set of index such that if $s\in \mathcal{S}$ then $\psi(I_s)\supset I_l$. If $s\notin \mathcal{S}$ then $\psi_{\lambda}(I_s)=\psi(I_s)$ is the convex hull of intervals distinct from $I_l^{\lambda}$. If $s\in \mathcal{S}$ then 

$$\psi_{\lambda}(I_s)=\psi(I_s)=\hat{I},$$
where $I=I_{j_1}\cup ...\cup I_l\cup... \cup I_{j_p}$ and $\hat{I}$ is the convex hull of $I$. If we set $I_a<I_b$ iff for every $x\in I_a$ and for every $y\in I_b$ we have $x<y$, so $I_l$ cannot be an extremal, because it is contained in the interior of any $\psi(J)$ that contains it. So  
$$\psi_{\lambda}(I_s)=\psi(I_s)=\hat{I}^{\lambda},$$
where $I^{\lambda}=I_{j_1}\cup ...\cup I^{\lambda}_l\cup... \cup I_{j_p}$. Thus, for any $I_s\in \mathcal{P}^{\lambda}$ we have  that $\psi_{\lambda}(I_s)$ is the convex hull of intervals in $\mathcal{P}^{\lambda}.$ We write $\mathcal{P}^{\lambda}=\{I_1^{\lambda},...,I_l^{\lambda},...,I_r^{\lambda}\}$, where $I_j^{\lambda}=I_j$ if $j\neq l$ and $I_l^{\lambda}=[a_l,b^{\lambda}_l].$  We showed above that $\psi_{\lambda}(I^{\lambda}_j)=\psi(I_j)$ for $j\in \{1,2,...,r\}$. Thus, if $k\neq l$ we have
$$\psi_{\lambda}(I^{\lambda}_j)\supset I^{\lambda}_k \Leftrightarrow \psi(I_j)\supset I_k.$$
If $k=l$, $\psi(I_j)\supset I_l$ implies $\psi_{\lambda}(I_j^{\lambda})\supset I_l\supset I_l^{\lambda}.$ Conversely, $\psi_{\lambda}(I^{\lambda}_j)\supset I^{\lambda}_l$ implies that $\psi(I_j)\supset I_l^{\lambda}$ and then $\psi(I_j)\cap I_l\neq \emptyset$, so $\psi(I_j)\supset I_l$. Thus, if we write
$$K_{\lambda}=\bigcap_{-n\ge 0}\psi^n_{\lambda}(I_1\cup... \cup I^{\lambda}_l\cup ... I_r),$$
$K_{\lambda}$ have the same combinatorics for $\lambda\in (0,1].$

Let $\mathcal{B}_n=\{\underline{a}=(a_0,a_1,...,a_{n-1});(a_i,a_{i+1})\in \mathcal{B}, \forall i\in\{0,1,...,n-2\}\}$ be the set of admissible words of length $n$. Given $0\le j\le n-1$ and $k\in \mathcal{A}$ we denote $\mathcal{B}_n(j,k)=\{\underline{a}\in \mathcal{B}_n; a_j=k\}$ and $\mathcal{B}_n(k)=\cup_{0\le j\le n-1}\mathcal{B}_n(j,k)$. We observe that $\mathcal{B}_n(l)^c$ is the set of admissible words of length $n$ associated to $K_0$, where $K_0$ is the regular Cantor set obtained by forgetting $I_l$. Suppose that $d_{\lambda}\neq d_{\mu}$ for some $\lambda\neq \mu$. Since the map $(\lambda,x)\mapsto \psi_{\lambda}(x)$ is analytic, by Ruelle (cf. \cite{R-82}, Corollary 3), the map $\lambda\mapsto HD(K_{\lambda})$ is also analytic. Since analytic functions cannot be locally constant we have that arbitrarily close to $1$ there exists $K_{\lambda}$ such that $HD(K_{\lambda})\neq HD(K).$ Suppose, by contradiction that $\lambda\mapsto d_{\lambda}$ is a constant map, we set $d_{\lambda}=a$, $\lambda\in (0,1]$.

If $\underline{b}=(b_0,...,b_{n-1})\in \mathcal{B}_n(j,l)$ and $x\in I_{\underline{b}}$, from the chain rule, we have
$$|(\psi^n_{\lambda})'_{|I_{\underline{b}}}(x)|=|(\psi^{n-j-1}_{\lambda})'(\psi_{\lambda}^{j+1}(x))||(\psi_{\lambda})'(\psi_{\lambda}^j(x))||(\psi^j_{\lambda})'(x)|.$$ 
Since $\psi^j_{\lambda}(I_{\underline{b}})\subset I^{\lambda}_{b_j...b_{n-1}}\subset I^{\lambda}_l$ if $j<n-1$, we have 

\begin{eqnarray}\label{2.2}
\lambda_{n,\underline{b}}\ge \inf |(\psi_{\lambda}^{n-j-1})'_{|\psi_{\lambda}^{j+1}(I_{\underline{b}})}|\inf |(\psi'_{\lambda})_{|I^{\lambda}_l}|\inf |(\psi_{\lambda}^j)'_{|I^{\lambda}_{\underline{b}}}|.
\end{eqnarray}
The same holds if $j=n-1$, without the first factor, here $\lambda_{n,\underline{b}}=\inf|(\psi^n_{\lambda})'_{|I_{\underline{b}}}|$. From (\ref{2.2}) we have that $\lambda^{-d}_{n,\underline{b}}\to 0$ if $\lambda\to 0$ for every $d>0$ and $n\in \mathbb{N}$. Since $d_{\lambda}=a$ and $d_{\lambda}>d_0>0$ ( Note that $d_0>0$ because $K_0$ is a regular Cantor set ) we have $a>0$. Let us define $\beta_n^{\lambda}$ as the only real number such that 

\begin{equation}\label{2.3}
1=\sum_{\underline{b}\in \mathcal{B}_n}\lambda^{-\beta_n^{\lambda}}_{n,\underline{b}}.
\end{equation}
It follows from (\ref{2.3}) and $a=d_{\lambda}\le \beta_n^{\lambda}$ that 
\begin{equation}\label{2.4}
1\le \sum_{\underline{b}\in \mathcal{B}_n}\lambda^{-a}_{n,\underline{b}}
\end{equation}
Since $\mathcal{B}_n=\mathcal{B}_n(l)\cup \mathcal{B}_n(l)^{c}$, (\ref{2.4}) can be written by
\begin{equation}\label{2.5}
1\le \sum_{\underline{b}\in \mathcal{B}_n(l)}\lambda^{-a}_{n,\underline{b}}+\sum_{\underline{b}\in \mathcal{B}_n(l)^c}\lambda^{-a}_{n,\underline{b}}.
\end{equation}
Thus, since $\lambda_{n,\underline{b}}^{-a}\to 0$ when $\lambda\to 0$ we have that the first parcel in the sum in (\ref{2.5}) goes to zero when $\lambda\to 0$. This implies that
\begin{equation}\label{2.6}
\sum_{\underline{b}\in \mathcal{B}_n(l)^c}\lambda^{-a}_{n,\underline{b}}\ge 1.
\end{equation}
If we define $\beta_n^0$ in the same way as $\beta_n^{\lambda}$, we see that $\beta^0_n\ge a$. However, $\mathcal{B}_n(l)^c$ is the set of admissible words of length $n$ associate to $K_0$ and then we have

$$d_0=\lim_{n\to \infty}\beta_n^0\ge a=d_{\lambda}>d_0,$$
contradiction. Thus, the map $\lambda\mapsto d_{\lambda}$ is not a constant map. Then there is $\lambda$ arbitrarily close to $1$ such that $HD(K_{\lambda})\neq HD(K)$. Moreover, given $k\in \mathbb{N}$ for every $\lambda$ sufficiently close to $0$ we have that $K_{\lambda}$ is $C^k$-arbitrarily close to $K$. Therefore $HD(K_{\lambda})\neq HD(K)$ for some $\lambda$ sufficiently close to $0$. 
\end{proof}

\begin{corollary}Given $a\in (0,1)$, the set $\mathcal{U}=\{K\in \mathcal{K}; HD(K)\neq a\}$ of regular Cantor sets is an open and dense set.
\end{corollary}
\begin{proof}
Since the map $K\mapsto HD(K)$ is a continuous function, $\mathcal{U}$ is an open set. Since the analytic Cantor sets are dense, we can suppose that we have an analytic Cantor set $K$ such that $HD(K)=a$. By the proposition \ref{P1} we can find another analytic Cantor set with $HD(K)\neq a$. This conclude the proof. 
\end{proof}

\section{Non-stability of the Hausdorff dimension of horseshoes}

Let $\varphi:M\rightarrow M$ be a $C^2$-diffeomorphism of a surface with a horseshoe $\Lambda$. Next, following \cite{PT93} let us fix a geometrical Markov partition $\{P_a\}_{a\in \mathcal{A}}$ with sufficiently small diameter consisting of rectangles $P_a \sim I_a^s \times I_a^u$ delimited by compact pieces $I_a^s$, $I_a^u$, of stable and unstable manifolds of certain points of $\Lambda$. We can define a subset $\mathcal{B}\subset \mathcal{A}^2$ of admissible transitions as the subset of pairs $(a,b)$ such that $\varphi(P_a)\cap P_b\neq \emptyset$. In this way there is an homeomorphism $\Pi:\Lambda\rightarrow \Sigma_{\mathcal{B}}$ such that 
	$$\Pi(\varphi(x))=\sigma(\Pi(x)).$$
Next, fix $p\in \Lambda$ a saddle fixed point. Consider the regular Cantor sets $K^s=W^s(p)\cap \Lambda$ and $K^u=W^u(p)\cap \Lambda$ (cf. \cite{PT93}, p. 54). We can define $K^s$ and $K^u$ as follows:

We define $g_s$ and $g_u$ in the following way: If $y\in R_{a_1}\cap \varphi(R_{a_0})$ we put

$$g_s(\pi^u_{a_1}(y))=\pi^u_{a_0}(\varphi^{-1}(y)).$$
and if $z\in R_{a_0}\cap \varphi^{-1}(R_{a_1})$ we put

$$g_u(\pi^s_{a_0}(z))=\pi^s_{a_1}(\varphi(z)).$$
We have that $g_s$ and $g_u$ are $C^{1+\epsilon}$ expanding maps of type $\Sigma_{\mathcal{B}}$ defining $K^s$ and $K^u$ in the sense that

\begin{enumerate}
	\item[(i)] The domains of $g_s$ and $g_u$ are disjoint unions $$\bigsqcup_{(a_0,a_1)\in \mathcal{B}} I^s(a_1,a_0) \ \mbox{and} \bigsqcup_{(a_0,a_1)\in \mathcal{B}}I^u(a_0,a_1),$$ where $I^s(a_1,a_0)$, resp. $I^u(a_0,a_1)$, are compact subintervals of $I^s_{a_1}$, resp. $I^u_{a_0}$;
	
	\item[(ii)] For each $(a_0,a_1)\in \mathcal{B},$ the restrictions $g_s|_{I^s(a_1,a_0)}$ and $g_u|_{|I^u(a_0,a_1)}$ are $C^{1+\epsilon}$ diffeomorphisms onto $I^s_{a_0}$ and $I^u_{a_1}$ with $|Dg_s(t)|,|Dg_u(t)|>1$, for all $t\in I^s(a_1,a_0)$, $t\in I^u(a_0,a_1)$ (for appropriate choices of the parametrization of $I^s_a$ and $I^u_a$);  
	
	\item[(iii)] $K^s$ and $K^u$ satisfies 
	$$K^s=\bigcap_{n\ge 0}g_s^{-n}\left(\bigsqcup_{(a_0,a_1)\in \mathcal{B}} I^s(a_1,a_0)\right) \ \ \ K^u=\bigcap_{n\ge 0}g_u^{-n}\left(\bigsqcup_{(a_0,a_1)\in \mathcal{B}}I^u(a_0,a_1)\right).$$
	
\end{enumerate}
This definition is in accordance with the remark \ref{rr1}.

The stable and unstable Cantor sets, $K^s$ and $K^u$, respectively, are closely related to the fractal geometry of the horseshoe $\Lambda$; for instance, it is well-known that

\begin{equation}\label{eq.1}
HD(\Lambda)=HD(K^s)+HD(K^u)
\end{equation}

Fix a Markov partition $\mathcal{P}=\{P_a\}_{a\in \mathcal{A}}$. Given an admissible finite sequence $\theta=(a_1,...,a_n)\in \mathcal{A}^n$ (i.e., $(a_i,a_i+1)\in \mathcal{B}$) for all $1\le i<n$, we define
	$$I^u(\theta)=\{x\in K^u; g_u^i(x)\in I^u(a_i,a_{i+1}), i=1,2,...,n-1\}.$$
Moreover, in a similar way
	$$I^s(\theta^t)=\{y\in K^s; g_s^i(y)\in I^s(a_{i},a_{i-1}), i=2,...,n\}.$$

We recall that a Markov partition of $\Lambda$ for $\varphi$ is a finite collection $\mathcal{P}=\{P_1,P_2,...,P_k\}$ of boxes, i.e., diffeomorphic images of the square $Q=[-1,1]$, say $P_1=f_1(Q),..., P_k=f_k(Q)$ such that

\begin{enumerate}
	\item[(i)] $\Lambda\subset P_1\cup ... \cup P_k$
	
	\item[(ii)] int$(P_i)\cap $int$P_j=\emptyset$ if $i\neq j$.
	
	\item[(iii)] $\varphi(\partial_sP_i)\subset \bigcup_j \partial_s P_j$ and $\varphi^{-1}(\partial_u P_i) \subset \bigcup_j \partial_u P_j$. 
	
	
	\item[(iv)] There is a positive integer $n$ such that $\varphi^n(P_i)\cap P_j\neq \emptyset$, for all $1\le i,j\le k.$
\end{enumerate}

We remark that we can obtain Markov partitions with similar properties as the lemma (\ref{L2}). We have the following theorem about Markov partitions useful for us in many results (cf. \cite{PT93}, pag. 172, theorem 2). We shall use the next theorem many times

\begin{theorem}\label{p}
	If $\Lambda$ is a horseshoe associated to a $C^2$-diffeomorphism $\varphi$, then there are Markov partitions for $\Lambda$ with arbitrarily small diameter.
\end{theorem}

\subsection{Hyperbolic sets of finite type}

Given a Horseshoe $\Lambda$ we know that there is a complete subshift of finite type $\Sigma(\mathcal{B})$ and a homeomorphism $\Pi:\Lambda\rightarrow \Sigma(\mathcal{B})$ such that $\varphi\circ \Pi=\Pi \circ \sigma$ as explained above. We write $\mathcal{B}_n=\{\gamma=(a_0,a_1,...,a_n); (a_i,a_{i+1})\in \mathcal{B}\}$ and $\mathcal{B}^{\infty}=\sqcup_{n\ge 0}\mathcal{B}_n$. If $\gamma^1,\gamma^2\in \mathcal{B}^{\infty}$ the concatenation of $\gamma^1=(a_1^1,...,a^1_{m_1})$ and $\gamma^2=(a_1^2,...,a^2_{m_2})$ is $\gamma^1\gamma^2$ and it is defined if $(a^1_{m_1},a^2_1)\in \mathcal{B}$. In this case, $\gamma^1\gamma^2$ is admissible.

	 Let $X=\{\gamma^1,\gamma^2,...,\gamma^n\}$ with $\gamma^i$ admissible finite word. We write $\gamma^i\to  \gamma^j$ iff $\gamma^i\gamma^j$ is admissible and $\gamma^i\sim \gamma^j$ iff $\gamma^i\to \gamma^j$ and $\gamma^j\to \gamma^i$. 
	 
\begin{definition}[Hyperbolic set of finite type]\label{hsf}
	The pre-image $\tilde{\Lambda}$ of 
	$$\Sigma(X)=\{\theta=(\beta^i)_{i\in \mathbb{Z}}; \beta^i\in X\}$$ by $\Pi$ will be called a \textbf{hyperbolic set of finite type}.
\end{definition}	
By definition, hyperbolic sets of finite type have local product sttucture.
\begin{definition}\label{subhorshoe}
A hyperbolic set of finite type, $\tilde{\Lambda}$, is called a \textbf{subhorseshoe} of $\Lambda$ if $\varphi|{\tilde{\Lambda}}$ is transitive. If $\tilde{\Lambda}$ is a periodic orbit it will be call of \textbf{trivial}.
\end{definition}

\begin{definition}\label{def.transient}
We call an word $\gamma^i\in X$ of \textbf{transient} if $\gamma^i$ it is not related to itself.
\end{definition}
A relation class determines a submatrix of $B$. The submatrix is composed of all $b_{\gamma^i,\gamma^j}$ with $[\gamma^i]=[\gamma^j]$. This submatrix is called an \textbf{irreducible component} of $B$.  It follows from Theorem 1.3.10 of \cite{BK} that there is a permutation matrix $C$ such that
$$C^{-1}BC=\left[ \begin{array}{ccccc}
B_1 & \ast & \ast & ... & \ast \\
0 & B_2 & \ast & ... & \ast \\
0 & 0 & B_3 & ... & \ast \\
. & . & . & ... & . \\
. & . & . & ... & . \\
. & . & . & ... & . \\
0 & 0 & 0 & ... & B_m
\end{array}  \right]
$$ 
where $B_i$ is either a nonnegative, square, irreducible matrix corresponding an irreducible component of $B$ or the one by one matrix $[0]$ corresponding to a transient state.

Using the homeomorphism $\Pi:\Lambda\rightarrow \Sigma_B$ for any hyperbolic set of finite type $\tilde{\Lambda}$ there is a submatrix $\tilde{B}$ of $B$ such that $\varphi |\tilde{\Lambda}\sim \sigma |\Sigma_{\tilde{B}}$. Following what was said above there are $\tilde{B}_i$ irreducible components such that the preimage of $\Pi$ of the associated sub-shifts $\Sigma_{\tilde{B}_i}$ are subhorseshoes. 
\begin{proposition}\label{prop2}
Let $\tilde{\Lambda}$ be a hyperbolic set of finite type. For $x\in \tilde{\Lambda}$ we have

\begin{enumerate}
\item[(i)] There are subhorseshoes $\Lambda_1$ and $\Lambda_2$ such that $\alpha(x)\subset \Lambda_1$ and $\omega(x)\subset\Lambda_2$.

\item[(ii)] $x$ is nonwandering if and only if $x$ is in a subhorseshoe $\Lambda'$.

\item[(iii)] $x$ is nonwandering if and only if there is a subhorseshoe $\Lambda'$ such that 
	$$\alpha(x)\cup w(x)\subset \Lambda'.$$
\end{enumerate}
\end{proposition}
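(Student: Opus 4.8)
Proposition~\ref{prop2} describes the topological dynamics of a hyperbolic set of finite type $\tilde\Lambda$ in terms of its subhorseshoes. The plan is to work entirely at the symbolic level, transporting everything through the homeomorphism $\Pi$ so that $\tilde\Lambda$ becomes a subshift $\Sigma_{\tilde B}$ of $\Sigma_B$, and then to exploit the block-triangular normal form $C^{-1}BC$ displayed above, whose diagonal blocks $\tilde B_i$ are either irreducible (hence give subhorseshoes after applying $\Pi^{-1}$, since an irreducible subshift of finite type that is topologically mixing on each piece yields a transitive, and hence by Definition~\ref{subhorshoe} ``subhorseshoe'', invariant set) or are transient $1\times 1$ zero blocks.

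First I would prove~(i). Given $x\in\tilde\Lambda$, consider its itinerary $\underline\theta=\Pi(x)=(\beta^i)_{i\in\mathbb Z}\in\Sigma(X)$. Because the alphabet $X$ is finite, the sequence of irreducible components visited by $\beta^i$ as $i\to+\infty$ is eventually constant: if it changed infinitely often, the block-triangular structure of $\tilde B$ would force it to leave and re-enter the same component, contradicting that distinct components cannot be mutually accessible (the triangular form is strict below the diagonal). So there is $N$ and a fixed irreducible component, say with matrix $\tilde B_j$, such that $\beta^i$ belongs to it for all $i\ge N$; every accumulation point of $\sigma^n(\underline\theta)$ as $n\to+\infty$ then has all its coordinates in that component, i.e.\ lies in $\Sigma_{\tilde B_j}$, whence $\omega(x)\subset\Lambda_2:=\Pi^{-1}(\Sigma_{\tilde B_j})$, a subhorseshoe. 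Symmetrically, running $i\to-\infty$ gives an irreducible component $\tilde B_i$ with $\alpha(x)\subset\Lambda_1:=\Pi^{-1}(\Sigma_{\tilde B_i})$. I should double-check that an $\omega$-limit (resp.\ $\alpha$-limit) set is automatically contained in the maximal invariant set of that component; this is immediate since $\Sigma_{\tilde B_j}$ is closed and $\sigma$-invariant and contains all sufficiently-forward shifts of $\underline\theta$.

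For~(ii), one direction is easy: if $x$ lies in a subhorseshoe $\Lambda'$, then $\varphi|_{\Lambda'}$ is transitive, so $x$ is non-wandering for $\varphi|_{\Lambda'}$ and a fortiori for $\varphi$ (a transitive homeomorphism of a compact set has every point non-wandering, as the orbit closure of a transitive point is the whole space). For the converse, suppose $x$ is non-wandering. Using~(i), pick the forward component $\tilde B_j$ (with $\omega(x)\subset\Lambda_2$) and the backward component $\tilde B_i$ (with $\alpha(x)\subset\Lambda_1$). The non-wandering hypothesis forces these two components to coincide: if $i\ne j$ then, by the strict triangular form, one cannot return from component $j$ back to component $i$, so no orbit can pass arbitrarily close to $x$, come from near $\alpha(x)\subset\Lambda_1$ and go to near $\omega(x)\subset\Lambda_2$, and then come back near $x$ — this would require a cycle between two different irreducible components, which is impossible. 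Hence $i=j$, and I claim $\beta^k$ lies in that single component $\tilde B_i$ for \emph{all} $k\in\mathbb Z$: the itinerary enters component $i$ at time $-N$, and since there is no way out of $i$ and back into $i$ except by staying in $i$, and it is back in component $i$ by time $+N$, the whole segment between must also be in $i$; combined with the tails, $\underline\theta\in\Sigma_{\tilde B_i}$, so $x\in\Pi^{-1}(\Sigma_{\tilde B_i})=:\Lambda'$, a subhorseshoe. Finally~(iii): if $x$ is non-wandering, the single component $\Lambda'$ just produced contains $\underline\theta$ itself and hence all forward and backward shifts, so $\alpha(x)\cup\omega(x)\subset\Lambda'$; conversely if $\alpha(x)\cup\omega(x)\subset\Lambda'$ for a subhorseshoe $\Lambda'$, transitivity of $\varphi|_{\Lambda'}$ plus the shadowing/local product structure of $\tilde\Lambda$ lets one close up an orbit through a neighborhood of $x$, showing $x$ is non-wandering — alternatively, this direction follows directly once one knows (from the argument for~(ii)) that $x\in\Lambda'$ already, since $\alpha(x)$ and $\omega(x)$ both nonempty and contained in the same irreducible component pins the whole itinerary of $x$ into that component.

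The main obstacle is the combinatorial bookkeeping around the block-triangular form: one must argue carefully that an itinerary which is eventually (both ways) in a single irreducible component is \emph{entirely} in that component, using that $C^{-1}BC$ is strictly upper triangular with irreducible diagonal blocks so that distinct components admit at most a one-way connection. The rest is soft — transitivity giving non-wandering, compactness giving eventual stabilization of the visited component, and closedness/invariance of $\Sigma_{\tilde B_i}$ giving the containments of $\alpha$- and $\omega$-limit sets. I would also make explicit at the start the translation dictionary ``subhorseshoe of $\Lambda$ $\leftrightarrow$ irreducible component of the defining matrix'' so that parts (i)--(iii) read as direct consequences of the normal form from \cite{BK}.
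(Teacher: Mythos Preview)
Your proposal is correct and is exactly the symbolic-dynamics argument underlying the paper's proof, which consists solely of the citation ``This is the observation 5.1.2 from \cite{BK}.'' You have unpacked that reference: the block-triangular normal form of the transition matrix forces the component index along an itinerary to be eventually constant in each time direction (giving (i)), and the impossibility of leaving and re-entering an irreducible component pins a nonwandering itinerary entirely into a single component (giving (ii) and (iii)).
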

\begin{proof}
This is the observation 5.1.2 from \cite{BK}.
\end{proof}
By the Proposition \ref{prop2}, if $x\in \tilde{\Lambda}$ is wandering then there are subhorseshoes $\Lambda_1\neq \Lambda_2$ such that $\alpha(x)\subset \Lambda_1$ and $\omega(x)\subset\Lambda_2$. 	
\begin{definition}\label{ts}
 Any set $\bar{\Lambda}$ for which there are subhorshoes $\Lambda_1$ and $\Lambda_2$ such that
	$$\bar{\Lambda}=\cup\{x\in \tilde{\Lambda}; \alpha(x)\subset \Lambda_1 \ \mbox{and} \ \omega(x)\subset\Lambda_2\}$$
will be called a \textbf{transient set} or \textbf{transient component} of $\tilde{\Lambda}$.
\end{definition}

\begin{remark}\label{description1}
	
It follows from the above description that any hyperbolic set of finite type can be written as
	$$\tilde{\Lambda}=\bigcup_{i=1}^k\tilde{\Lambda}_i,$$
where it is possible that some $\tilde{\Lambda}_j$ are periodic orbits, horseshoes or transient sets.

\end{remark}

\begin{remark}\label{o1}
	Note that there are only countably many hyperbolic set of finite type of $\Lambda$. In fact, each Markov partition has only finitely many elements and therefore the number of boxes in all the Markov partition is countable.
\end{remark}

Note that if $\bar{\Lambda}=\cup\{x\in \tilde{\Lambda}; \alpha(x)\subset \Lambda_1 \ \mbox{and} \ \omega(x)\subset\Lambda_2\}$ then
\begin{equation}\label{eq.transient}
HD(\bar{\Lambda})=HD(K^s(\Lambda_1))+HD(K^u(\Lambda_2)),
\end{equation}
where $K^s(\Lambda_1)$ is the stable Cantor set associated to $\Lambda_1$ and $K^u(\Lambda_2)$ is the unstable Cantor set associated to $\Lambda_2$.

\subsection{Fractal Geometry of Hyperbolic sets of finite type}
Recall that every $C^1$-manifold has a $C^{\omega}$-subatlas, so without loss of generality we assume, from now on, $M$ is a real analytic surface.
We want to use the above results about regular Cantor sets on the real line for $K^s$ and $K^u$ to show the following proposition

\begin{proposition}\label{2}
	Let $\varphi:M\rightarrow M$ be a $C^2$-diffeomorphism with a horseshoe $\Lambda_{\varphi}$ and $HD(\Lambda_{\varphi})\ge b>0$. There is a $C^2$-open set $\mathcal{U}\ni \varphi$ and a residual set $\mathcal{R}\subset \mathcal{U}$ such that for $\psi\in \mathcal{R}$ with a horseshoe $\Lambda_{\psi}$, continuation of $\Lambda_{\varphi}$, every hyperbolic set of finite type $\tilde{\Lambda}_{\psi}$ is such that 
	$$HD(\tilde{\Lambda}_{\psi})\neq b.$$		
	\begin{remark}
		Note that if $HD(\Lambda)<b$ the statement is trivial and it is true over the open set $\mathcal{U}$. Moreover, $\mathcal{U}$ is an open set where we have hyperbolic continuation.
	\end{remark}

\end{proposition}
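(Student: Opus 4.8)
The plan is to reduce the statement about horseshoes to the one-dimensional results of Section 2, applied to the stable and unstable Cantor sets. First I would use Theorem \ref{p} to fix a Markov partition of $\Lambda_\varphi$ with small diameter, and recall that $K^s$ and $K^u$ are $C^{1+\epsilon}$ regular Cantor sets with $HD(\Lambda_\varphi)=HD(K^s)+HD(K^u)$ by \eqref{eq.1}. Since $M$ may be taken real analytic and $\varphi$ only $C^2$, the first technical point is that $K^s$ and $K^u$ are only finitely differentiable; however, by Proposition \ref{P1} and its proof (the perturbation $\psi_\lambda$ there is a local modification that can be performed in the $C^k$ topology for any $k$, here $k=2$), one can perturb within $\Diff^2(M)$ so that the stable and unstable Cantor sets of the continuation $\Lambda_\psi$ have Hausdorff dimensions avoiding any prescribed value. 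One does this by realizing the one-dimensional perturbation of $g_s$ (resp. $g_u$) as coming from a $C^2$-small perturbation of $\varphi$ supported near a piece of the horseshoe, using the local product structure; this is where Lemmas \ref{L2} and \ref{Lema de pert} are invoked, to arrange that the relevant Markov partition has the interior property so that forgetting an interval still yields a Cantor set and Proposition \ref{P1} applies.

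Next I would address the fact that we must control \emph{all} hyperbolic sets of finite type $\tilde\Lambda_\psi$ simultaneously, not just $\Lambda_\psi$ itself. By Remark \ref{o1} there are only countably many such sets (they correspond to submatrices coming from the countably many Markov partitions), and by Remark \ref{description1} each one decomposes as a finite union $\tilde\Lambda=\bigcup_i\tilde\Lambda_i$ where each $\tilde\Lambda_i$ is a subhorseshoe, a periodic orbit, or a transient set; by \eqref{eq.transient} the dimension of a transient component is $HD(K^s(\Lambda_1))+HD(K^u(\Lambda_2))$ for subhorseshoes $\Lambda_1,\Lambda_2$, so in all cases $HD(\tilde\Lambda)$ is a finite sum of terms of the form $HD(K^s(\Lambda'))$ or $HD(K^u(\Lambda''))$ over subhorseshoes. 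Since a subhorseshoe is associated to an irreducible submatrix of $B$, and there are countably many of these over all Markov partitions, the collection of all possible values $HD(\tilde\Lambda_\psi)$ is governed by a countable family of stable/unstable Cantor sets. For each such Cantor set, the set of $\psi$ in a fixed $C^2$-neighborhood $\mathcal{U}$ for which its dimension equals a given number $b$ — more precisely, for which any of the countably many relevant finite sums equals $b$ — is closed with empty interior, by the perturbation argument of the previous paragraph (one can always perturb to move at least one summand, hence the sum, off the value $b$), combined with continuity of Hausdorff dimension of Cantor sets under $C^2$ perturbations.

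Then the residual set is obtained by a Baire category argument: enumerate the countably many finite sums $S_j(\psi)$ of dimensions of stable/unstable Cantor sets of subhorseshoes that can arise as $HD(\tilde\Lambda_\psi)$, let $\mathcal{R}_j=\{\psi\in\mathcal{U} : S_j(\psi)\neq b\}$, show each $\mathcal{R}_j$ is open and dense in $\mathcal{U}$, and set $\mathcal{R}=\bigcap_j\mathcal{R}_j$. Openness of $\mathcal{R}_j$ follows from continuity of the dimension functions; density is the content of the perturbation construction above. For $\psi\in\mathcal{R}$, every hyperbolic set of finite type $\tilde\Lambda_\psi$ has $HD(\tilde\Lambda_\psi)=S_j(\psi)\neq b$ for the appropriate $j$, which is the claim. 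The remark in the statement handles the case $HD(\Lambda_\varphi)<b$ trivially, since then by continuity $HD(\Lambda_\psi)<b$ on a neighborhood and every subhorseshoe has dimension at most $HD(\Lambda_\psi)<b$.

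The main obstacle I expect is the bookkeeping in the reduction step: showing that a purely one-dimensional perturbation of the expanding map $g_s$ (the operation $\psi\mapsto\psi_\lambda$ from Proposition \ref{P1}, which shrinks one interval of a refined Markov partition) can genuinely be realized by a $C^2$-small perturbation of the surface diffeomorphism $\varphi$, compatibly with the product structure and without destroying hyperbolicity or introducing unwanted changes to the combinatorics of other subhorseshoes. One must check that perturbing $g_s$ while keeping $g_u$ fixed (or vice versa) is achievable, that the perturbation can be localized to a small neighborhood of a single box of the Markov partition so it affects the dimension of $K^s(\Lambda_\psi)$ but the combinatorics of all hyperbolic sets of finite type are preserved (so the countable enumeration is stable on $\mathcal{U}$), and that the resulting change in $HD(\Lambda_\psi)$ — and hence in each finite sum $S_j$ — is nonconstant in the perturbation parameter, which is exactly the analyticity/non-locally-constant argument of Proposition \ref{P1} transported to this setting. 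Once this realization lemma is in place, the rest is a routine Baire-category packaging.
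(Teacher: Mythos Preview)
Your overall architecture matches the paper's exactly: enumerate the countably many hyperbolic sets of finite type (Remark~\ref{o1}), show for each one that the set of $\psi$ with $HD(\tilde\Lambda_\psi)\neq b$ is open and dense (this is Corollary~\ref{c1} in the paper), and intersect. The decomposition into subhorseshoes and transient pieces via Remark~\ref{description1} and \eqref{eq.transient} is also exactly how the paper reduces to stable/unstable Cantor sets (see Remark~\ref{rl4}).

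Where you leave a flagged gap --- the realization of the one-dimensional perturbation of $g_s$ as a $C^2$-small perturbation of $\varphi$ --- the paper takes a specific route that you do not mention and that is not quite the direct ``shrink a box in product coordinates'' picture you sketch. The obstruction is that the stable/unstable foliations, and hence the product coordinates, are only $C^{1+\epsilon}$, so one cannot simply write $\varphi(x,y)=(f_i(x),g_i(y))$ smoothly and perturb $f_i$. The paper's Lemma~\ref{l4} instead (i) first approximates $\varphi$ by a $C^\omega$ diffeomorphism using Theorem~\ref{M}, (ii) replaces the invariant foliations by nearby $C^\omega$ ones and modifies the map to preserve them (citing \cite{M1}), (iii) applies Proposition~\ref{P1} to the now analytic $K^s$ to get a second analytic map with different $HD(K^s)$, (iv) joins the two by the analytic isotopy of Lemma~\ref{L3}, and (v) invokes Pollicott's Theorem~\ref{T1} to conclude that $\mu\mapsto HD(\Lambda_\mu)$ is real-analytic and non-constant, hence differs from $b$ for parameters arbitrarily close to $0$. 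Your ``analyticity/non-locally-constant argument of Proposition~\ref{P1} transported to this setting'' is precisely this, but the transport is through Pollicott's result on surface diffeomorphisms rather than Ruelle's one-dimensional theorem directly; you should cite Lemma~\ref{l4} and Theorem~\ref{T1} for the step you called the main obstacle.
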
	
	Next, we give a simple but central lemma for our goal.
	
	\begin{lemma}\label{L3}
		If we have $C^s$ maps $\varphi:M\rightarrow M$ and $\psi:M \rightarrow M$, such that when restrict to an open set $U\subset M$ are $C^r$ maps, $1\le s\le r\le \omega$ and they are $C^1$-sufficiently close over this one, then there is a $C^r$-isotopy $H:U\times [0,1]\rightarrow M$ such that $H(x,0)=(\varphi|U)(x)$ and $H(x,1)=(\psi|U)(x)$. Moreover $H(\cdot,t):U\rightarrow M$ is $C^1$-close to $\varphi|U$ for all $t\in [0,1]$. 
	\end{lemma}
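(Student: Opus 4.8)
The plan is to construct the isotopy by straight-line interpolation in a suitable chart, together with a standard cutoff argument to glue it to $\varphi$ away from $U$. First I would cover $M$ by finitely many real-analytic (hence $C^r$) charts; since the statement is local over $U$ and the conclusion only concerns the restriction to $U$, I can work in a single coordinate chart on a neighborhood of a given point, and then patch. In such a chart, $\varphi|U$ and $\psi|U$ both become $C^r$ maps into $\mathbb{R}^2$, and the hypothesis that they are $C^1$-close means that for each $x\in U$ the segment $t\mapsto (1-t)(\varphi|U)(x)+t(\psi|U)(x)$ stays inside the chart's target and avoids leaving $M$; define $H(x,t)$ to be this segment read back through the chart. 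Because $\varphi,\psi$ are $C^r$ on $U$ and the interpolation is affine (hence $C^\omega$) in $t$, the map $H\colon U\times[0,1]\to M$ is $C^r$; and $\|H(\cdot,t)-\varphi|U\|_{C^1}=t\,\|\psi|U-\varphi|U\|_{C^1}\le\|\psi|U-\varphi|U\|_{C^1}$, which gives the required $C^1$-closeness uniformly in $t$.

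The only genuine subtlety is making this interpolation \emph{globally} well-defined, i.e. independent of the chart used, so that the pieces agree on overlaps. I would handle this exactly as in the usual proof that two $C^1$-close maps are isotopic: fix a $C^r$ Riemannian metric on $M$ (available since $M$ is real-analytic), and for $x\in U$ with $\varphi(x)$ and $\psi(x)$ sufficiently close, let $H(x,t)=\exp_{\varphi(x)}\!\big(t\,\exp_{\varphi(x)}^{-1}(\psi(x))\big)$, the geodesic from $\varphi(x)$ to $\psi(x)$. The exponential map of a $C^r$ metric is $C^{r-1}$, but on a real-analytic manifold with the analytic metric $\exp$ is real-analytic, so no derivatives are lost; here the $C^1$-closeness hypothesis is used quantitatively to ensure $\psi(x)$ lies within the injectivity radius of $\varphi(x)$ for every $x\in U$, so that $\exp_{\varphi(x)}^{-1}(\psi(x))$ is defined and depends $C^r$-smoothly on $x$. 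This $H$ is manifestly chart-independent, satisfies $H(x,0)=\varphi(x)$, $H(x,1)=\psi(x)$, and again $d_{C^1}(H(\cdot,t),\varphi|U)\le d_{C^1}(\psi|U,\varphi|U)$ because $t\mapsto H(x,t)$ is a minimizing geodesic and geodesic interpolation is $1$-Lipschitz in the relevant norms.

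The step I expect to be the main (minor) obstacle is the bookkeeping in the $C^1$ estimate: one must check that differentiating $x\mapsto H(x,t)=\exp_{\varphi(x)}(t\,v(x))$, with $v(x)=\exp_{\varphi(x)}^{-1}(\psi(x))$, produces a derivative that is $C^0$-close to $D\varphi$ uniformly in $t$, which follows because $v$ and $Dv$ are small (of the same order as $d_{C^1}(\varphi,\psi)$) and $\exp$ together with its differential is Lipschitz on the relevant compact set. Once this uniform estimate is in hand, the lemma is complete; there is no need for a cutoff at all if we only want $H$ defined on $U\times[0,1]$ mapping into $M$, since $U$ is where everything is $C^r$ and the closeness hypothesis is assumed. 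I would state the construction via the analytic exponential map, record the $C^r$-regularity and the uniform $C^1$-bound, and remark that on the region where $\varphi$ and $\psi$ were only $C^s$ one gets the analogous $C^s$-isotopy by the same formula.
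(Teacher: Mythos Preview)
Your proposal is correct and takes essentially the same approach as the paper: define $H(x,t)=\exp_{\varphi(x)}\bigl(t\,v(x)\bigr)$ with $v(x)=\exp_{\varphi(x)}^{-1}(\psi(x))$, using $C^1$-closeness to stay inside the injectivity radius. The paper's proof is exactly this geodesic interpolation, stated in two sentences; your additional discussion of the $C^1$ estimate and the regularity of $\exp$ (and the preliminary chart-based version) is extra detail the paper omits.
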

	
	\begin{proof}
		Let $\delta>0$ be the injectivity radius of $M$. Then, if $\varphi$ and $\psi$ are $\delta$-$C^1$-close then for every $x\in U$ there exists $v(x)\in T_{\varphi(x)}M$ such that $\exp_{\varphi(x)}(v(x))=\psi(x).$ Since $\exp_p$ is a $C^r$ diffeomorphism of $B^{T_pM}_{\delta}(0_p)$ on $B^M_{\delta}(p)$ and $v$ is defined implicitly by $C^r$ functions, over $U$, we have that the map $H:U\times [0,1]\rightarrow M$ defined by
		
		$$H(x,t)=\exp_{\varphi(x)}(tv(x))$$
		is a $C^r$ path between $\varphi$ and $\psi$ as the statement
		\end{proof}
	
	\begin{remark}\label{cons}
		Using generating functions we can obtain the same statement adding that if $\varphi$ and $\psi$ are conservative then we can find a $C^r$-isotopy which is conservative for every $t$.  
	\end{remark}

We shall use the following version of a result due to Mark Pollicott:

\begin{theorem}\label{T1}
	Let $\varphi_{\mu}:M\rightarrow M$ be a family of $C^1$-diffeomorphisms of $M$, where $(\mu, x)\mapsto (\varphi_{\mu}|U)(x)$ is $C^{\omega}$, $\mu\in (-\delta,\delta)$ and such that $\Lambda_0=\bigcap_{n\in \mathbb{Z}}\varphi^n_0(U)$
	is a horseshoe for $\varphi_0$ and $\Lambda_{\mu}$ denote the hyperbolic continuation of $\Lambda_0$ associated to $\varphi_{\mu}$. Then,
	$$\mu\mapsto HD(\Lambda_{\mu})$$
	is an analytic function. 
\end{theorem}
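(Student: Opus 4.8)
The plan is to reduce the statement about the horseshoe $\Lambda_\mu$ on the surface to a statement about the associated stable and unstable regular Cantor sets on the line, and then invoke the real-analytic dependence of the Hausdorff dimension of a hyperbolic Cantor set (Ruelle's analyticity of pressure, as already used in Proposition~\ref{P1}). First I would recall the identity $HD(\Lambda_\mu)=HD(K^s_\mu)+HD(K^u_\mu)$ from \eqref{eq.1}, which holds uniformly along the hyperbolic continuation. Thus it suffices to show that $\mu\mapsto HD(K^s_\mu)$ and $\mu\mapsto HD(K^u_\mu)$ are each real-analytic, since the sum of two analytic functions is analytic.

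Next I would set up the thermodynamic formalism. For each $\mu$ the unstable Cantor set $K^u_\mu$ is defined by the expanding map $g_{u,\mu}$ of type $\Sigma_{\mathcal B}$ built from $\varphi_\mu$ as in the construction preceding \eqref{eq.1}; because $(\mu,x)\mapsto(\varphi_\mu|U)(x)$ is $C^\omega$ and the invariant stable/unstable foliations and the holonomy projections $\pi^{s,u}_a$ depend analytically on the dynamics on the appropriate domain, the map $(\mu,t)\mapsto g_{u,\mu}(t)$ is real-analytic, and similarly for $g_{s,\mu}$. By Bowen's formula, $d_u(\mu):=HD(K^u_\mu)$ is the unique real number $s$ for which the topological pressure $P\big(\varphi_\mu, -s\log|Dg_{u,\mu}|\big)=0$. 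Ruelle's theorem (the Corollary~3 of \cite{R-82} cited in the proof of Proposition~\ref{P1}) gives that $(\mu,s)\mapsto P\big(\varphi_\mu,-s\log|Dg_{u,\mu}|\big)$ is real-analytic in a neighbourhood of any point, and the implicit function theorem applies because $\partial_s P = -\int \log|Dg_{u,\mu}|\,d\nu_s < 0$ (the potential is strictly negative, $|Dg_{u,\mu}|>1$). Hence $d_u(\mu)$ is real-analytic, and likewise $d_s(\mu):=HD(K^s_\mu)$; adding gives the claim.

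An alternative, essentially equivalent route is to quote Theorem~\ref{T1} literally: it already asserts analyticity of $\mu\mapsto HD(\Lambda_\mu)$ for a family with $(\mu,x)\mapsto(\varphi_\mu|U)(x)$ real-analytic. If one wants to prove rather than cite it, the argument above is the content; one should also note the standard point that $HD(\Lambda_\mu)$ can be computed from a pair of conformal expanding Cantor sets even though $\varphi_\mu$ itself need only be $C^2$ globally — all the relevant objects ($g_{s,\mu}$, $g_{u,\mu}$ and their Jacobians) are $C^\omega$ \emph{on the relevant domain} $U$, which is all the thermodynamic formalism sees.

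The main obstacle is justifying the real-analytic dependence of the one-dimensional expanding maps $g_{s,\mu}, g_{u,\mu}$ on $\mu$ — that is, that the stable and unstable foliations of $\Lambda_\mu$ restricted to a neighbourhood, and hence the projections defining $K^s_\mu,K^u_\mu$, vary analytically when $(\mu,x)\mapsto(\varphi_\mu|U)(x)$ does. This requires an analytic invariant-manifold / stable-manifold theorem with parameters (the foliations are only as smooth as the map transversally, but along leaves and in the parameter one gets analyticity on the domain where $\varphi_\mu$ is analytic), together with the fact that the holonomies between stable leaves are analytic on this domain. Once that regularity is in place, the passage through Bowen's equation and Ruelle's analyticity of pressure, plus the implicit function theorem, is routine, and the final addition $HD(\Lambda_\mu)=d_s(\mu)+d_u(\mu)$ finishes the proof.
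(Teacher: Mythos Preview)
Your overall strategy --- decompose via $HD(\Lambda_\mu)=d_s(\mu)+d_u(\mu)$ and invoke Bowen's formula together with analyticity of pressure --- is sound in spirit, and the paper itself remarks right after the theorem that $d_s$ and $d_u$ are separately analytic. But the step you single out as the ``main obstacle'' is a genuine gap that your proposed resolution does not close. The stable and unstable \emph{foliations} of a horseshoe are in general only $C^{1+\epsilon}$ even when $\varphi$ is $C^\omega$; the paper itself uses exactly this in the proof of Lemma~\ref{l4}, where it explicitly replaces the $C^{1+\epsilon}$ foliations by nearby $C^\omega$ ones before proceeding. Individual leaves $W^{s,u}(x)$ are analytic, but the transverse dependence of the leaf on the base point --- which governs the holonomies $\pi^{s,u}_a$ and hence the regularity of the expanding maps $g_{s,\mu},g_{u,\mu}$ --- is not. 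So an analytic invariant-manifold theorem with parameters does not deliver what you need, and Ruelle's Corollary~3 from \cite{R-82} cannot be applied directly to $(\mu,t)\mapsto g_{s,\mu}(t),\,g_{u,\mu}(t)$ as you propose.

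The paper's proof avoids this issue entirely and takes a different route: it observes that the \emph{periodic points} $p_\mu$ depend real-analytically on $\mu$ (implicit function theorem applied to the analytic equation $\varphi_\mu^n(x)=x$), and then cites Lemmas~9 and~10 of Pollicott \cite{P-15}. Pollicott's argument expresses the pressure, and hence the dimensions $d_{s,u}(\mu)$ via Bowen's equation, through periodic-orbit data --- the unstable and stable eigenvalues of $D\varphi_\mu^n(p_\mu)$ --- which require no foliation regularity whatsoever and are manifestly analytic in $\mu$. This is precisely the ingredient your route lacks: a way to access the thermodynamic formalism through objects that inherit the analyticity of $\varphi_\mu|U$ without passing through the (merely H\"older) holonomies.
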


\begin{proof}
	We observe that by the basic structural stability theorem for Axiom A diffeomorphisms there is a natural bijection between the periodic points of
	$\varphi_0$ and $\varphi_{\mu}$. Moreover, the map $\mu\mapsto p_{\mu}$ the hyperbolic continuation of periodic points is $C^{\omega}$, because $\varphi_{\mu}|U$ is analytic. The claim follows from lemmas 9 and 10 in \cite{P-15}.
\end{proof}
In fact, if we call $K^{s,u}_{\mu}$ the stable and unstable Cantor sets associated to $\Lambda_{\mu}$ then $d_{s,u}({\mu})=HD(K^{s,u}_{\mu})$ are analytical functions. This follows from the proof of the above Theorem in \cite{P-15}.
\begin{lemma}\label{l4}
	If $\Lambda_{\varphi}$ is a horseshoe  associated to a $C^{\omega}$-diffeomorphism $\varphi$, with $HD(\Lambda_{\varphi})=b$. Then, there is a $C^2$- diffeomorphism $\hat{\varphi}$, $C^2$-close to $\varphi$, such that $HD(\Lambda_{\hat{\varphi}})\neq b$, where $\Lambda_{\hat{\varphi}}$ is the hyperbolic continuation of $\Lambda_{\varphi}$.
\end{lemma}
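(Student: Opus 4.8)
\textbf{Proof plan for Lemma \ref{l4}.}

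The plan is to perturb $\varphi$ inside a small real-analytic chart around a single periodic point of $\Lambda_\varphi$ and invoke the analyticity of Hausdorff dimension along analytic families. First I would fix a Markov partition $\{P_a\}_{a\in\mathcal A}$ for $\Lambda_\varphi$ with sufficiently small diameter (Theorem \ref{p}), and by Lemmas \ref{L2} and \ref{Lema de pert} I may assume the associated stable and unstable expanding maps $g_s,g_u$ of type $\Sigma_{\mathcal B}$ have Markov partitions with the Interior Property. Passing to a real-analytic subatlas of $M$ (which exists, as noted in the excerpt), I would pick a box $P_{a}$ and an open set $U\supset \Lambda_\varphi$ on which everything is real analytic, or rather a $C^\omega$-small perturbation of $\varphi|_U$: by density of analytic maps I can replace $\varphi$ by a $C^\omega$-diffeomorphism $C^2$-close to it without changing the problem, and then I want to move within the analytic category.

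The heart of the argument is to mimic, at the level of the horseshoe, the construction of Proposition \ref{P1}. Recall $HD(\Lambda_\varphi)=HD(K^s)+HD(K^u)$ by \eqref{eq.1}, and by the remark after Theorem \ref{T1} both $d_s(\mu)=HD(K^s_\mu)$ and $d_u(\mu)=HD(K^u_\mu)$ vary analytically along any analytic family $\varphi_\mu|_U$. So it suffices to produce a one-parameter analytic family $\varphi_\mu$, $\mu\in(-\delta,\delta)$, with $\varphi_0=\varphi$, $\varphi_\mu|_U$ jointly analytic in $(\mu,x)$, each $\varphi_\mu$ being $C^2$-close to $\varphi$, along which $\mu\mapsto d_u(\mu)$ (say) is non-constant. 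Indeed, once this is done, since $d_u$ is analytic and not locally constant it takes a value $\ne d_u(0)$ at some $\mu$ arbitrarily close to $0$; then $HD(\Lambda_{\varphi_\mu})=d_s(\mu)+d_u(\mu)$, and since $d_s+d_u$ is itself analytic in $\mu$ it is likewise non-constant unless $d_s$ exactly cancels the variation of $d_u$ — but that possibility can be ruled out by choosing the perturbation to affect only the unstable direction (e.g. a perturbation supported so that the induced change on $g_s$ is trivial while $g_u$ genuinely changes), so $\mu\mapsto HD(\Lambda_{\varphi_\mu})$ is non-constant and hence takes a value $\ne b$ arbitrarily close to $\mu=0$, giving the desired $\hat\varphi$.

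To build the family: choose, as in Proposition \ref{P1}, an interval $I_l$ of the Markov partition of $K^u$ lying in the interior of every $g_u$-image that contains it (Interior Property), and define $g_{u,\lambda}$ by precomposing with the affine contraction of $I_l$ onto a subinterval, exactly as in the proof of Proposition \ref{P1}; this gives an analytic family $\lambda\mapsto g_{u,\lambda}$ with $K^u_\lambda$ having fixed combinatorics and $HD(K^u_\lambda)$ non-constant (that is the content of that proof). The remaining point is to realize this family of one-dimensional expanding maps as the unstable-holonomy maps of an honest analytic family of surface diffeomorphisms $\varphi_\lambda$ which are $C^2$-close to $\varphi$ and agree with $\varphi$ outside a neighbourhood of the chosen box. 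Here I would use Lemma \ref{L3} together with Remark \ref{cons}: having an analytic isotopy at the level of the local unstable map, I extend it to an isotopy of $\varphi|_U$ (acting essentially only in the unstable direction, leaving stable fibers essentially fixed, so the induced $g_s$-family is constant), and then glue back to $\varphi$ outside $U$ using the isotopy's $C^1$-smallness to keep the global map a $C^2$-diffeomorphism with $\Lambda_{\varphi_\lambda}$ the hyperbolic continuation of $\Lambda_\varphi$. I expect the main obstacle to be precisely this last gluing/realization step: verifying that the abstract perturbation of the expanding map $g_u$ can be produced by a surface diffeomorphism perturbation that (i) is jointly analytic in $(\lambda,x)$ on $U$, (ii) preserves hyperbolicity and the combinatorics of the horseshoe, and (iii) leaves the stable Cantor set's dimension undisturbed (or at least does not accidentally make $d_s+d_u$ constant) — all while staying $C^2$-close to $\varphi$. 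Everything else is a direct transcription of Proposition \ref{P1} plus Theorem \ref{T1}.
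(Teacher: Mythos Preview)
Your overall strategy coincides with the paper's: apply Proposition~\ref{P1} to one of the two expanding maps $g_s,g_u$, lift the resulting one-parameter family of $1$-dimensional maps to an analytic family of surface diffeomorphisms via Lemma~\ref{L3}, and then invoke Theorem~\ref{T1} to conclude that $\mu\mapsto HD(\Lambda_\mu)$ is analytic and non-constant. You have also correctly located the only genuinely delicate point, namely the realization step (your items (i)--(iii)).

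The gap is that you do not actually resolve that point, and the tools you cite for it (Lemma~\ref{L3} and Remark~\ref{cons}) do not do the job. Lemma~\ref{L3} produces an isotopy between two surface maps that are already given and $C^1$-close; it does not tell you how to lift a perturbation of the abstract $1$-dimensional map $g_u$ to a perturbation of $\varphi$, nor does it guarantee that such a lift can be chosen to act ``only in the unstable direction''. The obstruction is that the stable/unstable foliations of $\Lambda_\varphi$ are, a priori, only $C^{1+\epsilon}$, so there is no analytic product chart in which $\varphi$ reads as $(x,y)\mapsto(f_i(x),g_i(y))$ and in which you could perturb one factor while freezing the other.

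The paper fills exactly this gap by an intermediate linearization step (citing \cite{M1}): one first replaces the genuine invariant foliations by $C^\omega$ foliations that are $C^1$-close to them, and modifies the diffeomorphism accordingly to a map $\tilde\varphi$ which in these new coordinates has the product form $\tilde\varphi(x,y)=(\tilde f_i(x),\tilde g_i(y))$ on each Markov box. Only after this does Proposition~\ref{P1} apply cleanly to one factor, yielding $\hat\varphi(x,y)=(\hat f_i(x),\tilde g_i(y))$ with $HD(\hat K^s)\neq HD(K^s)$ (or $\neq HD(\tilde K^s)$); the other Cantor set is literally unchanged, so no accidental cancellation can occur. Lemma~\ref{L3} is then used to interpolate analytically between $\varphi|_U$ and $\hat\varphi|_U$ (both now analytic on $U$), and Theorem~\ref{T1} finishes as you outline. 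Note also that the passage $\varphi\rightsquigarrow\tilde\varphi$ is only $C^1$-small, which is why the paper needs the analyticity of $\mu\mapsto HD(\Lambda_\mu)$ on the whole segment $[0,1]$ and then picks $\mu$ near $0$ to recover $C^2$-closeness; your formulation with $\mu\in(-\delta,\delta)$ hides this subtlety.
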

\begin{proof}
	Let $\varphi:M\rightarrow M$ be a $C^{\omega}$-diffeomorphism of a compact connected surface with a horseshoe $\Lambda_{\varphi}$. The foliations of $\Lambda$ may be only $C^{1+\epsilon}$.  We know that there is a Markov partition of $\Lambda_{\varphi}$ such that in each piece $P_i$ of the partition $\mathcal{P}$, in $C^1$ coordinates given by stable and unstable foliations, the diffeomorphism has the form $\varphi(x,y)=(f_i(x),g_i(y))$ at $P_i\in \mathcal{P}$. Taking a fixed point $p\in \Lambda$ and consider the stable regular Cantor set $K^s=W_{\varphi}^s(p)\cap \Lambda$. We know that $K^s$ is a regular Cantor set. We can suppose that its Markov partition has the interior property $(IP)$ as in the lemma \ref{L2}. Now, we replace the foliations by $C^{\omega}$ foliations, $C^1$-close to the previous ones, and change the diffeomorphism in order to have the new invariant foliations in a neighbourhood of the horseshoe, defining it by the $C^{\omega}$ formulas $\tilde{\varphi}(x,y)=(\tilde{f}_i(x),\tilde{g}_i(y))$ in the pieces $\tilde{P}_i$, where $\tilde{\varphi}$ is $C^1$-close to $\varphi$ (cf. \cite{M1}). As we observe in the remark \ref{re 2} the Markov partition associated to now $C^{\omega}$-regular Cantor set $\tilde{K}^s=W_{\tilde{\varphi}}^s(\tilde{p})\cap \Lambda_{\tilde{\varphi}}$, also has the property in the Lemma \ref{L2}, where $(\tilde{p},\tilde{\Lambda})$ are the hyperbolic continuation of $(p,\Lambda)$, $\tilde{p}\in \tilde{\Lambda}$.  
	By the proposition \ref{P1} there is a $C^w$ map $\hat{f}_i$, $C^2$ close to $f_i$, such that if we put $\hat{\varphi}(x,y)=(\hat{f}_i(x),\tilde{g}_i(y))$ in the corresponding Markov partition for the extended diffeomorphism $\hat{\varphi}:M\rightarrow M$ with the property that $\hat{\varphi}=\varphi$ on $M\backslash U$, where $U$ is the neighbourhood of $\Lambda_{\varphi}$ such that $\Lambda_{\varphi}=\bigcap_{n\in \mathbb{Z}}\varphi^n(U)$, we have either $HD(\hat{K^s})\neq HD(\tilde{K}^s)$ or 
	$HD(\hat{K}^s)\neq HD(K^s)$ where $\hat{K}^s=W_{\hat{\varphi}}^s(\hat{p})\cap \Lambda_{\hat{\varphi}}$. We suppose without loss of generality that $HD(\hat{K}^s)\neq HD(K^s)$. Thus, both diffeomorphisms $\varphi$ and $\hat{\varphi}$ are analytic when restrict to $U$ and by the Lemma \ref{L3} we have a family $\{\varphi_{\mu}\}_{\mu\in [0,1]}$ of $C^{\omega}$ maps, $\varphi_{\mu}:U\rightarrow M$, such that $\varphi_0=\varphi|U$ and $\varphi_1=\hat{\varphi}|U$. Taking $\tilde{f}_i$ and $\tilde{g}_i$ sufficiently $C^1$ close to $f_i$ and $g_i$ and $\hat{f}_i$ sufficiently $C^2$ close to $\tilde{f}_i$, respectively, for every $i$ such that makes sense talk to $\tilde{P}_i$ and $\hat{P_i}$, we can assume that, for a possible small neighbourhood $\tilde{U}\subset U$, because the continuity of $(\mu,x)\mapsto \varphi_{\mu}(x)$, 
	$\varphi_{\mu}(\tilde{U})\subset \varphi(U)$, for all $\mu\in [0,1]$. Thus, we can extend $\varphi_{\mu}$ to $M$ with the property that $\varphi_{\mu}=\varphi$ on $M\backslash U$ (cf. \cite{H73}, Lemma 2.8, p.50). Note that $\varphi_1=\hat{\varphi}$ is $C^1$-close to $\varphi$, because inside $U$ we only have $\varphi |U$ $C^1$-close to $\hat{\varphi}|U$ but for $\mu$ close to $0$ we have that $\varphi_{\mu}$ is $C^2$-close $\varphi$, by the Lemma \ref{L3}. Then we have a family of $C^2$-diffeomorphisms satisfying the hypotheses of Theorem \ref{T1} and since the map $\mu\mapsto HD(\Lambda_{\mu})$ is analytic with $HD(\Lambda_0)\neq HD(\Lambda_1)$ there are arbitrarily close to $0$ parameters $\mu$  such that $HD(\Lambda_{\mu})\neq HD(\Lambda_0)=HD(\Lambda_{\varphi})$, because (\ref{eq.1}). But, $\varphi_{\mu}$ is $C^2$-close to $\varphi$ when $\mu$ is close to $0$. 
\end{proof}

\begin{remark}\label{rl4}
	The same statement is true if we replace \textbf{horseshoe} by \textbf{hyperbolic set of finite type}. In fact, by (\ref{eq.transient}), every  hyperbolic set of finite type $\tilde{\Lambda}_{\varphi}$ can be write as a finite union of periodic orbits, horseshoes and transient set
	$$\tilde{\Lambda}_{\varphi}=\tilde{\Lambda}^1_{\varphi}\cup ... \cup \tilde{\Lambda}^k_{\varphi}.$$
	Moreover, $HD(\tilde{\Lambda}_{\varphi})=\max\{HD(\tilde{\Lambda}^i_{\varphi});1\le i\le k\}$.
	Let $j$ such that $$HD(\tilde{\Lambda}_{\varphi})=HD(\tilde{\Lambda}^j_{\varphi}).$$ If $\tilde{\Lambda}^j_{\varphi}$ is a horseshoe there is nothing to do, it follows from the above lemma. If $\tilde{\Lambda}^j_{\varphi}$ is a transient set we proceed as well as the proof of the above lemma, just note that $HD(\tilde{\Lambda}^j_{\varphi})=HD(K^u(\Lambda'))+HD(K ^u(\Lambda''))$, where $\tilde{\Lambda}^j_{\varphi}=W^s(\Lambda'_{\varphi})\cap W^u(\Lambda''_{\varphi})$ and $K^s(\Lambda')$ and $K^u(\Lambda'')$ are the stable Cantor set of $\Lambda'$ and ustable Cantor set of $\Lambda'',$ respectively.
	
\end{remark}

In order to show the Proposition \ref{2} we remember a deep result of Morrey in the compact case and Grauert and Remmert in the general case ( cf. \cite{H73} p. 65 )

\begin{theorem}\label{M}
	Let $M$ and $N$ be $C^{\omega}$ manifolds. Then $C^{\omega}(M,N)$ is dense in $C^r(M,N)$, $1\le r\le \infty$.
\end{theorem}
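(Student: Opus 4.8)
The plan is to reduce the statement to a Euclidean approximation problem by embedding the target analytically into some $\mathbb{R}^q$ and then ``projecting'' a Euclidean approximant back onto $N$ through a real-analytic retraction. First I would invoke the real-analytic embedding theorem (Morrey in the compact case, Grauert in general): there is a proper $C^{\omega}$ embedding $\iota\colon N\hookrightarrow\mathbb{R}^q$ realizing $N$ as a closed real-analytic submanifold, and similarly a closed analytic embedding $M\hookrightarrow\mathbb{R}^p$. Under these embeddings a map $f\in C^r(M,N)$ becomes the $C^r$ map $\iota\circ f\colon M\to\mathbb{R}^q$ whose image happens to lie on $\iota(N)$, so the problem becomes: produce a nearby \emph{analytic} $\mathbb{R}^q$-valued map on $M$ whose image still lies on $\iota(N)$.

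Second, I would build a real-analytic tubular neighborhood of the target: an open set $V\supset\iota(N)$ in $\mathbb{R}^q$ together with a $C^{\omega}$ retraction $\pi\colon V\to\iota(N)$ restricting to the identity on $\iota(N)$. This is the analytic version of the tubular neighborhood theorem; it exists because the normal bundle of an analytic submanifold is analytic and the exponential-type identification furnishing the tube can be taken real-analytic. The same construction applied to $\iota(M)\subset\mathbb{R}^p$ provides an analytic retraction $\rho$ of a neighborhood of $\iota(M)$, which I will use to extend source-defined maps to open sets of $\mathbb{R}^p$.

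Third comes the analytic approximation step. Extending $\iota\circ f$ to a $C^r$ map $F$ on a neighborhood of $\iota(M)$ in $\mathbb{R}^p$ (compose with $\rho$), I would approximate $F$ componentwise by convolution with a Gaussian/heat kernel $G_t(x)=(4\pi t)^{-p/2}\exp(-|x|^2/4t)$. Since $G_t$ extends to an entire function of its argument, $G_t*F$ is real-analytic for each $t>0$, and $G_t*F\to F$ together with all derivatives of order $\le r$, uniformly on compact sets, as $t\to 0^+$. Restricting to $\iota(M)$ yields a real-analytic map $g\colon M\to\mathbb{R}^q$ that is $C^r$-close to $\iota\circ f$. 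For a good enough approximation $g(M)\subset V$, so $\tilde f:=\iota^{-1}\circ\pi\circ g\colon M\to N$ is well defined, real-analytic as a composition of analytic maps, and $C^r$-close to $f$ because $\pi$ is analytic and equal to the identity along $\iota(N)$, hence $C^r$-Lipschitz on the relevant compact sets.

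The hard part is twofold. The two global inputs — the analytic embedding theorem and the existence of an analytic tubular retraction — are the substantial content (precisely the Morrey and Grauert--Remmert results being cited), rather than formal consequences of the convolution trick. Moreover, in the noncompact setting density is meant in the strong (Whitney) $C^r$ topology, so the convolution, which is only locally controlled, must be patched using a locally finite analytic partition-of-unity-type argument with a decreasing sequence of smoothing parameters $t_k\to 0$ adapted to an exhaustion of $M$; securing uniform control of all derivatives up to order $r$ at infinity is where the genuine analytic difficulty lies, whereas on compact manifolds the argument above goes through directly.
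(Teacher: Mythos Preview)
The paper does not prove this theorem at all: it merely records it as a ``deep result of Morrey in the compact case and Grauert and Remmert in the general case'' and gives a page reference to Hirsch's \emph{Differential Topology}. There is therefore no proof in the paper to compare your proposal against.

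Your sketch is a reasonable outline of the standard argument and you correctly flag where the genuine depth lies: the real-analytic embedding of the target and the existence of a real-analytic tubular retraction are precisely the nontrivial inputs (indeed, the Grauert embedding theorem is essentially equivalent in difficulty to the density statement itself, so in a sense your reduction is circular unless one takes those results as black boxes --- which is exactly what the paper does). The convolution-and-project step is the soft part and works as you describe in the compact case. Your caveat about the noncompact case is apt: controlling the approximation in the strong Whitney topology requires more than a single global convolution parameter, and the usual fix via locally finite patching with shrinking $t_k$ needs real care since genuine analytic partitions of unity do not exist; one must instead use Whitney-type analytic approximation with controlled growth, which is again part of the Grauert--Remmert machinery rather than an elementary add-on.
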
  

It follows from our results in this section and of the Theorem \ref{M} that

\begin{corollary}\label{c1}
	Let $\tilde{\Lambda}_{\varphi}$ be a hyperbolic set of finite type of $\Lambda_{\varphi}$ with $HD(\tilde{\Lambda}_{\varphi})=b$, associated to a $C^2$ diffeomorphism $\varphi$. Then, there is a $C^2$-open set $\mathcal{U}$, which does not depend on $\tilde{\Lambda}_{\varphi}$, such that $\mathcal{U}^{\ast}=\{\psi\in \mathcal{U};HD(\tilde{\Lambda}_{\psi})\neq b\}$ is a $C^2$-open and dense subset of $\mathcal{U}$, where $\tilde{\Lambda}_{\psi}$ denotes the hyperbolic continuation of $\tilde{\Lambda}_{\varphi}$.
\end{corollary}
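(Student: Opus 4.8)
The plan is to derive Corollary \ref{c1} by combining Lemma \ref{l4} (in the stronger form of Remark \ref{rl4}, which applies to any hyperbolic set of finite type, not merely to horseshoes) with the density theorem of Morrey--Grauert--Remmert (Theorem \ref{M}), and with the basic structural stability of horseshoes. First I would fix the $C^2$-open set $\mathcal{U}$: since $\Lambda_\varphi$ is a horseshoe, by structural stability there is a $C^2$-neighbourhood $\mathcal{U}\ni\varphi$ on which the hyperbolic continuation $\Lambda_\psi$ is defined and depends continuously (in the Hausdorff metric) on $\psi$; consequently the hyperbolic continuation $\tilde\Lambda_\psi$ of the finite-type subset $\tilde\Lambda_\varphi$ is also defined throughout $\mathcal{U}$, and crucially this $\mathcal{U}$ depends only on $\varphi$ (equivalently on $\Lambda_\varphi$), not on the particular $\tilde\Lambda_\varphi$, since there are only countably many finite-type subsets and they all live inside $\Lambda_\psi$ (Remark \ref{o1}). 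Openness of $\mathcal{U}^\ast$ is then immediate from continuity of $\psi\mapsto HD(\tilde\Lambda_\psi)$, which follows from (\ref{eq.1}), (\ref{eq.transient}) and continuity of the Hausdorff dimension of the associated stable/unstable Cantor sets under $C^1$-perturbation.

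For density, take an arbitrary $\psi\in\mathcal{U}$; I must produce $\psi'$ arbitrarily $C^2$-close to $\psi$ inside $\mathcal{U}$ with $HD(\tilde\Lambda_{\psi'})\neq b$. If already $HD(\tilde\Lambda_\psi)\neq b$ there is nothing to do, so assume equality. By Theorem \ref{M}, approximate $\psi$ in the $C^2$-topology by a $C^\omega$ diffeomorphism $\psi_0$; choosing the approximation fine enough keeps $\psi_0\in\mathcal{U}$, and by structural stability the continuation $\tilde\Lambda_{\psi_0}$ is $C^2$-close to $\tilde\Lambda_\psi$, hence $HD(\tilde\Lambda_{\psi_0})$ is as close to $b$ as we like (but possibly no longer exactly $b$ — if it is already $\neq b$ we are done). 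Assuming it equals $b$, apply Remark \ref{rl4} (the finite-type version of Lemma \ref{l4}): decompose $\tilde\Lambda_{\psi_0}=\tilde\Lambda^1_{\psi_0}\cup\dots\cup\tilde\Lambda^k_{\psi_0}$ into periodic orbits, horseshoes and transient sets, pick the index $j$ realizing the maximum $HD(\tilde\Lambda_{\psi_0})=HD(\tilde\Lambda^j_{\psi_0})=b$, and perturb $\psi_0$ inside a small neighbourhood $U$ of $\Lambda_{\psi_0}$ — using Proposition \ref{P1} to perturb one of the relevant stable or unstable real-line Cantor sets, the $C^1$-conjugacy linearizing the stable/unstable foliations à la \cite{M1}, and the isotopy Lemma \ref{L3} together with Theorem \ref{T1} to realize the perturbation by a family $\psi_\mu$ of $C^2$-diffeomorphisms with $\mu\mapsto HD(\tilde\Lambda^j_{\psi_\mu})$ analytic and nonconstant. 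Since an analytic nonconstant function cannot be locally constant, for arbitrarily small $\mu$ we get $HD(\tilde\Lambda^j_{\psi_\mu})\neq b$; taking $\mu$ small enough that $\psi_\mu\in\mathcal{U}$ and that none of the other pieces $\tilde\Lambda^i_{\psi_\mu}$, $i\neq j$, has jumped above $b$ (possible by continuity, since they all started strictly below $b$ or, if tied, can be handled simultaneously as indicated in Remark \ref{rl4}), we obtain $HD(\tilde\Lambda_{\psi_\mu})=\max_i HD(\tilde\Lambda^i_{\psi_\mu})\neq b$, and $\psi_\mu$ is the desired $C^2$-close diffeomorphism.

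The main obstacle I anticipate is the bookkeeping around the decomposition $\tilde\Lambda=\bigcup_i\tilde\Lambda^i$ together with the requirement that $\mathcal{U}$ not depend on $\tilde\Lambda_\varphi$. One must be careful that perturbing to change $HD(\tilde\Lambda^j)$ does not inadvertently push some other component's dimension up to $b$: this is why it matters that the perturbation is localized (supported in $U$ near $\Lambda_{\psi_0}$, equal to $\psi_0$ outside) and small, so that by continuity every component's dimension stays within $\varepsilon$ of its original value; the components originally below $b$ stay below, and the ones tied at $b$ (including $\tilde\Lambda^j$) are all built from the same underlying stable and unstable Cantor sets and so move coherently under a single perturbation of one of those Cantor sets, exactly as in the proof of Lemma \ref{l4} and Remark \ref{rl4}. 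A second, more routine point is checking that the hyperbolic continuation operation is itself continuous and that it commutes with passing to finite-type subsets — but this is standard structural stability for locally maximal hyperbolic sets, so I would simply cite \cite{PT93}. With these points addressed, the corollary follows.
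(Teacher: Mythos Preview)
Your proposal is correct and follows essentially the same approach as the paper: take $\mathcal{U}$ to be the neighbourhood where hyperbolic continuation exists, obtain openness of $\mathcal{U}^\ast$ from continuity of $\psi\mapsto HD(\tilde\Lambda_\psi)$, and obtain density by first approximating an arbitrary $\psi\in\mathcal{U}$ by a $C^\omega$ diffeomorphism via Theorem~\ref{M} and then invoking Lemma~\ref{l4}/Remark~\ref{rl4}. Your discussion of the component bookkeeping is somewhat more detailed than the paper's (which simply cites Remark~\ref{rl4} as a black box), but the argument is the same.
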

\begin{proof}
	Let $\mathcal{U}\ni \varphi$ be the open set where we have hyperbolic continuation. Take any open set $\hat{\mathcal{U}}\subset \mathcal{U}$. By the Theorem \ref{M} there is an analytic diffeomorphism$\hat{\varphi}\in \hat{\mathcal{U}}$. If $HD(\tilde{\Lambda}_{\hat{\varphi}})\neq b$ there is nothing to do. Suppose then $HD(\tilde{\Lambda}_{\hat{\varphi}})=b$. By the Lemma \ref{l4} or its remark \ref{rl4} we have a $C^2$-diffeomorphism $\psi\in \hat{\mathcal{U}}$ such that $HD(\tilde{\Lambda}_{\psi})\neq b$. Thus $\mathcal{U}^{\ast}$ is dense. Moreover, in the same notation of remark \ref{rl4}, since $HD(\tilde{\Lambda}_{\psi})=\max\{HD(\tilde{\Lambda}^i_{\psi});1\le i\le k\}$ we have that the map $\psi\mapsto HD(\tilde{\Lambda}_{\psi})$ is a continuous function we have that $\mathcal{U}^{\ast}$ is open.
\end{proof}

\bigskip

\begin{flushleft}
\bf{Proof of Proposition \ref{2}.} 
\end{flushleft}
Let $S(\Lambda_{\varphi})=\{\Lambda^1,\Lambda^2,...\}$ be the set of subhorseshoes of $\Lambda_{\varphi}$. By the Corollary \ref{c1} each $\mathcal{U}^{\ast}_j=\{\psi\in \mathcal{U};HD(\Lambda^j)\neq b\}$ is an open and dense subset of $\mathcal{U}$. Therefore
$$\mathcal{R}=\bigcap_{j\in \mathbb{N}}\mathcal{U}^{\ast}_j$$
is the residual set of the statement.  We observe that in the case $HD(\Lambda_{\varphi})=b$, all the subhorseshoes have Hausdorff dimension smaller than $b$ and we are in the conditions of the Corollary \ref{c1}. Thus, $\mathcal{R}$ is an open and dense subset. So we have proved the proposition \ref{2}.

\begin{corollary}\label{c2}
	Given $b\in (0,2)$, let $\mathcal{U}=\{\varphi\in \Diff^2(M); HD(\Lambda_{\varphi})>b\}$. Then there exists a residual set $\mathcal{R}\subset \mathcal{U}$ such that if $\varphi\in \mathcal{R}$ we have $HD(\tilde{\Lambda})\neq b$ for all hyperbolic set of finite type $\tilde{\Lambda}$ of $\Lambda_{\varphi}$.
\end{corollary}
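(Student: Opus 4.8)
The plan is to deduce Corollary \ref{c2} from Proposition \ref{2} by a standard Baire-category argument, exploiting that $\Diff^2(M)$ is a Baire space and that there are only countably many hyperbolic sets of finite type attached to a given horseshoe (Remark \ref{o1}). First I would fix $b\in(0,2)$ and set $\mathcal{U}=\{\varphi\in\Diff^2(M); HD(\Lambda_\varphi)>b\}$; this is open because $HD(\Lambda_\varphi)$ depends continuously on $\varphi$ on the region where the hyperbolic continuation $\Lambda_\varphi$ is defined, and on $\mathcal{U}$ the horseshoe persists. The point is that Proposition \ref{2} is a \emph{local} statement: it produces, for each $\varphi\in\mathcal{U}$, a $C^2$-open neighbourhood $\mathcal{U}_\varphi\ni\varphi$ (inside $\mathcal{U}$) and a residual subset $\mathcal{R}_\varphi\subset\mathcal{U}_\varphi$ on which every hyperbolic set of finite type avoids dimension $b$. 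To globalize, cover $\mathcal{U}$ by such neighbourhoods; since $\Diff^2(M)$ is second countable (hence Lindel\"of), extract a countable subcover $\{\mathcal{U}_{\varphi_n}\}_{n\in\mathbb{N}}$, and observe that on each $\mathcal{U}_{\varphi_n}$ the set $\mathcal{R}_{\varphi_n}$ is residual in the open set $\mathcal{U}_{\varphi_n}$, hence $\mathcal{R}_{\varphi_n}\cup(\mathcal{U}\setminus\mathcal{U}_{\varphi_n})$ is residual in $\mathcal{U}$. Then $\mathcal{R}=\bigcap_{n}\big(\mathcal{R}_{\varphi_n}\cup(\mathcal{U}\setminus\mathcal{U}_{\varphi_n})\big)$ is residual in $\mathcal{U}$, and any $\varphi\in\mathcal{R}$ lies in some $\mathcal{U}_{\varphi_n}$, hence in $\mathcal{R}_{\varphi_n}$, so every hyperbolic set of finite type $\tilde\Lambda$ of $\Lambda_\varphi$ satisfies $HD(\tilde\Lambda)\neq b$.

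Alternatively — and this is arguably cleaner — I would argue directly using Corollary \ref{c1} together with Remark \ref{o1}. For a fixed $\varphi_0\in\mathcal{U}$ with hyperbolic-continuation neighbourhood $\mathcal{U}_0$, enumerate the countably many hyperbolic sets of finite type $\{\tilde\Lambda^{(1)},\tilde\Lambda^{(2)},\dots\}$ of $\Lambda_{\varphi_0}$ (finitely many per Markov partition, countably many Markov partitions). For each $j$, Corollary \ref{c1} (applied to the continuation $\tilde\Lambda^{(j)}$, whose dimension is a continuous function of the diffeomorphism) gives that $\mathcal{V}_j=\{\psi\in\mathcal{U}_0; HD(\tilde\Lambda^{(j)}_\psi)\neq b\}$ is open and dense in $\mathcal{U}_0$: open by continuity of $\psi\mapsto HD(\tilde\Lambda^{(j)}_\psi)$ (using Remark \ref{rl4}, $HD$ of a hyperbolic set of finite type is the max of the dimensions of its horseshoe/transient/periodic pieces, each continuous), and dense by Lemma \ref{l4} / Remark \ref{rl4}. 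Hence $\bigcap_j\mathcal{V}_j$ is residual in $\mathcal{U}_0$; patching these local residual sets over a countable cover of $\mathcal{U}$ as above yields the global residual $\mathcal{R}$.

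The main technical point to be careful about — the potential obstacle — is that the enumeration of hyperbolic sets of finite type and their hyperbolic continuations is uniform over a whole $C^2$-neighbourhood: as long as we stay in the open set $\mathcal{U}_0$ where the horseshoe $\Lambda$ persists, every Markov partition of $\Lambda_{\varphi_0}$ continues to a Markov partition of $\Lambda_\psi$ with the same combinatorics, so the \emph{list} of hyperbolic sets of finite type (as combinatorial objects, i.e.\ choices of admissible finite words $X$ as in Definition \ref{hsf}) does not change with $\psi$, only their geometry does. This is exactly what makes "$HD(\tilde\Lambda^{(j)}_\psi)\neq b$ for all $j$" a countable intersection of open dense conditions rather than something worse. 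One should also note the degenerate case flagged in the Remark after Proposition \ref{2}: if $HD(\Lambda_{\varphi_0})\le b$ the conclusion is automatic for all hyperbolic subsets, so no genericity is needed there; within $\mathcal{U}$ we always have $HD(\Lambda_\varphi)>b$, and proper hyperbolic sets of finite type may still have dimension exactly $b$, which is precisely what the residual set removes. Modulo this bookkeeping, the corollary is a formal consequence of Proposition \ref{2}, Corollary \ref{c1}, Remark \ref{rl4}, Remark \ref{o1}, and the Baire property of $\Diff^2(M)$.
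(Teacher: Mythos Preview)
Your proof is correct and follows the same route as the paper: apply Proposition~\ref{2} to each $\varphi\in\mathcal{U}$, extract a countable subcover $\{\mathcal{U}_{\varphi_j}\}$ by the Lindel\"of property, and patch the local residuals into a global one. The paper writes the global residual simply as $\mathcal{R}=\bigcup_{j}\mathcal{R}_{\varphi_j}$ rather than your intersection $\bigcap_n(\mathcal{R}_{\varphi_n}\cup(\mathcal{U}\setminus\mathcal{U}_{\varphi_n}))$, but the former contains the latter and is hence residual for the same reason; your second alternative via Corollary~\ref{c1} and Remark~\ref{o1} is essentially how Proposition~\ref{2} itself was established.
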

\begin{proof}
	We know that given $\varphi\in \mathcal{U}$ there is an open set $\mathcal{U}_{\varphi}$ and a residual $\mathcal{R}_{\varphi}\subset \mathcal{U}_{\varphi}$ such that if $\psi\in \mathcal{R}_{\varphi}$ we have $HD(\tilde{\Lambda}_{\psi})\neq b$ for every subhorseshoe of $\Lambda_{\psi}$. Since $\mathcal{U}=\bigcup_{\varphi\in \mathcal{U}}\mathcal{U}_{\varphi}$ is Lindel\"of we can take a countable subcover of $\{\mathcal{U}_{\varphi}\}_{\varphi}$. Then $\mathcal{U}=\bigcup_{j\in \mathbb{N}}\mathcal{U}_{\varphi_j}$. So $\mathcal{R}=\cup_{j\ge 1}\mathcal{R}_{\varphi_j}$ is a residual set in the statement.
\end{proof}

\begin{proposition}\label{cons1}
	Let $\varphi:M\rightarrow M$ be a $C^2$-conservative diffeomorphism with a horseshoe $\Lambda_{\varphi}$ and $HD(\Lambda_{\varphi})\ge b>0$. There is a $C^2$-open set $\mathcal{U}\ni \varphi$ of conservative diffeomorphisms and a residual set $\mathcal{R}\subset \mathcal{U}$ such that for $\psi\in \mathcal{R}$ with a horseshoe $\Lambda_{\psi}$, continuation of $\Lambda_{\varphi}$, every hyperbolic set of finite type $\tilde{\Lambda}_{\psi}$ have Hausdorff dimension different from $b$.
\end{proposition}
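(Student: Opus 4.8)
The plan is to mimic the proof of Proposition \ref{2}, replacing at every stage the ambient space $\Diff^2(M)$ by its closed subspace $\Diff^2_{\ast}(M)$ of conservative diffeomorphisms, and checking that each ingredient used there respects the conservative constraint. Concretely, I would first establish the conservative analogue of Lemma \ref{l4}: given a $C^{\omega}$ conservative diffeomorphism $\varphi$ with a horseshoe of Hausdorff dimension $b$, there is a $C^2$ conservative $\hat{\varphi}$, $C^2$-close to $\varphi$, with $HD(\Lambda_{\hat{\varphi}})\neq b$. The proof of Lemma \ref{l4} proceeds by passing to $C^1$ linearizing coordinates along stable/unstable foliations, replacing the foliations by $C^{\omega}$ ones, perturbing one of the one-dimensional expanding maps $f_i$ via Proposition \ref{P1} so that the stable Cantor set changes dimension, and then interpolating through a $C^{\omega}$ family via Lemma \ref{L3} and applying Pollicott's analyticity Theorem \ref{T1}. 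Each of these steps has a conservative version: the perturbation of Proposition \ref{P1} on $K^s$ can be compensated in the unstable direction (or realized through a generating function) so that the total map stays area-preserving — this is exactly the content of Remark \ref{cons}, which already promises a conservative $C^r$-isotopy in Lemma \ref{L3}. Since dimension changes along the analytic family $\mu\mapsto HD(\Lambda_\mu)$ and analytic functions are not locally constant, we get conservative parameters arbitrarily close to $\varphi$ with $HD(\Lambda_{\mu})\neq b$.

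Next I would record the conservative analogue of Corollary \ref{c1}: for a hyperbolic set of finite type $\tilde\Lambda_\varphi$ of dimension $b$, there is a $C^2$-open set $\mathcal{U}\subset \Diff^2_{\ast}(M)$ containing $\varphi$ (the set where hyperbolic continuation holds, intersected with $\Diff^2_{\ast}(M)$) such that $\{\psi\in\mathcal{U}; HD(\tilde\Lambda_\psi)\neq b\}$ is open and dense in $\mathcal{U}$. Density uses Theorem \ref{M} to approximate by analytic diffeomorphisms — here I need the conservative version of the density of analytic maps, i.e. that $C^{\omega}$ conservative diffeomorphisms are $C^r$-dense among $C^2$ conservative ones; this is standard (one can use generating functions, as already invoked in Remark \ref{cons}, or the analytic approximation results for symplectomorphisms) and then applies the conservative Lemma \ref{l4} together with Remark \ref{rl4} (whose argument, decomposing a hyperbolic set of finite type into periodic orbits, horseshoes and transient sets with $HD(\tilde\Lambda_\varphi)=\max_i HD(\tilde\Lambda^i_\varphi)$, is insensitive to the conservative constraint). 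Openness follows from continuity of $\psi\mapsto HD(\tilde\Lambda_\psi)$ exactly as before. Finally, exactly as in the proof of Proposition \ref{2}, I enumerate the countably many subhorseshoes $\{\Lambda^1,\Lambda^2,\dots\}$ of $\Lambda_\varphi$ (Remark \ref{o1}), write $\mathcal{U}^{\ast}_j=\{\psi\in\mathcal{U}; HD(\Lambda^j_\psi)\neq b\}$, each open and dense, and set $\mathcal{R}=\bigcap_{j}\mathcal{U}^{\ast}_j$; since every hyperbolic set of finite type has $HD$ equal to the maximum of the dimensions of the subhorseshoes in its decomposition, $\psi\in\mathcal{R}$ forces $HD(\tilde\Lambda_\psi)\neq b$ for all hyperbolic sets of finite type.

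The step I expect to be the main obstacle is the conservative version of Lemma \ref{l4}, and within it, arranging that the one-dimensional perturbation changing $HD(K^s)$ can be carried out while staying area-preserving with good $C^2$ control. In the dissipative case one simply perturbs $f_i$ on a single stable interval and leaves the unstable dynamics untouched; in the conservative case the perturbation must be balanced so that the Jacobian stays $1$, which forces a simultaneous change of the unstable maps $g_i$. One must then verify that this coupled perturbation is still $C^2$-small, still has a hyperbolic continuation of the horseshoe, and still produces a genuine change in $HD(K^s)+HD(K^u)=HD(\Lambda)$ — a priori the changes in the stable and unstable dimensions could conspire to cancel. The cleanest way around this is to work with generating functions (as flagged in Remark \ref{cons}): a $C^{\omega}$ family of generating functions interpolating between the original and perturbed dynamics automatically gives a $C^{\omega}$ conservative family $\varphi_\mu$, and one invokes Theorem \ref{T1} to conclude that $\mu\mapsto HD(\Lambda_\mu)$ is analytic; since it is non-constant (because $HD(\Lambda_0)\neq HD(\Lambda_1)$, which is arranged by the choice of endpoint perturbation), it takes a value $\neq b$ at parameters arbitrarily close to $0$. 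Modulo this conservative bookkeeping, every remaining step is a verbatim repetition of the arguments already given for Proposition \ref{2} and Corollary \ref{c2}.
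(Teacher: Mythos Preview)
Your proposal is correct and follows essentially the same route as the paper: the paper's proof simply states that one repeats the argument of Proposition~\ref{2}, needing only a conservative version of Lemma~\ref{l4}, which it obtains from Remark~\ref{cons} (conservative isotopies via generating functions) together with the density of analytic conservative diffeomorphisms (citing \cite{GL}). You have identified exactly these ingredients and the correct point of difficulty; the only simplification you might add is that in the conservative setting $HD(K^s)=HD(K^u)$, so any change in $HD(K^s)$ forces a change in $HD(\Lambda)=2HD(K^s)$, which dispels the cancellation worry directly.
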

\begin{proof}
	The proof is analogous to the proof of proposition \ref{2}. We need to prove a version of lemma \ref{l4}, which follows from the remark \ref{cons} and the analogous theorem on the density of analytic conservative diffeomorphisms, cf \cite{GL}.
\end{proof}

\section{The Markov and Lagrange dynamical spectra}

We begin this section giving all the ingredients outside this paper. Recall that by the section 3, if $p$ is a fixed point in $\Lambda$, the sets $K^s=W^s(p)\cap \Lambda$ and $K^u=W^u(p)\cap \Lambda$, are regular Cantor sets defined by the expanding maps $g_s$ and $g_u$.
\begin{theorem}[MY-10]\label{MY-10}
	Suppose that the sum of the Hausdorff dimensions of the regular Cantor sets $K^s$ and $K^u$, defined by $g_s$ and $g_u$, is greater than one. If the neighbourhood $\mathcal{U}$ of $\varphi$ in $\Diff^{\infty}(M)$ is sufficiently small, there is an open and dense set $\mathcal{U}^{\ast}\subset \mathcal{U}$ such that, for $\psi\in \mathcal{U}^{\ast}$, the corresponding pair of expanding $(g,\tilde{g})$ belongs to $V$ ( cf. \cite{MY-10} and \cite{MY-01}).
\end{theorem}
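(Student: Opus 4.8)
\textbf{Proof strategy for Theorem \ref{MY-10}.} The plan is to cite the Moreira--Yoccoz machinery from \cite{MY-10} and \cite{MY-01} and merely verify that its hypotheses are met in the present context; the statement is essentially a translation of their main result into the language of expanding maps of type $\Sigma_{\mathcal{B}}$. First I would recall the setup of \cite{MY-01}: there one works with the space of pairs of regular Cantor sets (equivalently, pairs of expanding maps defining them) equipped with a suitable $C^k$-topology, and one singles out a set $V$ of pairs with the \emph{stable intersection} property --- pairs $(g,\tilde g)$ such that all nearby pairs in the appropriate renormalization limit have (extremally, recurrently compact) intersection. The key input is the Moreira--Yoccoz theorem that, \emph{when $HD(K^s)+HD(K^u)>1$}, the set $V$ contains an open set whose closure contains the pair $(g_s,g_u)$ itself after perturbation; more precisely, it is dense (in the relevant stratum) among pairs of expanding maps with sum of Hausdorff dimensions exceeding one.

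Second, I would explain why the diffeomorphism-level statement follows. Given $\varphi$ with $HD(K^s)+HD(K^u)>1$, the pair $(g_s,g_u)$ of expanding maps attached to a Markov partition varies continuously (indeed $C^k$) with $\psi$ ranging over a small neighbourhood $\mathcal{U}\subset\Diff^\infty(M)$, by the structural stability of the horseshoe $\Lambda$ and the smooth dependence of the stable and unstable foliations on $\psi$ (the same continuity used implicitly in Section 3 for $d_{s,u}(\mu)$). Thus the assignment $\psi\mapsto(g_\psi,\tilde g_\psi)$ is a continuous map from $\mathcal{U}$ into the space of pairs of expanding maps, and the set of pairs landing in the open dense subset of $V$ pulls back to an open set; density then follows because $V$ is dense among pairs with dimension sum $>1$ and this open condition persists under the small perturbations realized by diffeomorphisms near $\varphi$ (here one uses, as in Lemma \ref{l4}, that perturbations of the one-dimensional dynamics $f_i$, $g_i$ inside a Markov box can be realized by genuine perturbations of $\psi$ supported near $\Lambda$). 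Hence there is an open and dense $\mathcal{U}^\ast\subset\mathcal{U}$ on which $(g_\psi,\tilde g_\psi)\in V$.

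The main obstacle --- or rather the main point requiring care --- is matching the two frameworks precisely: \cite{MY-10} and \cite{MY-01} phrase their results for abstract pairs of regular Cantor sets with a prescribed combinatorial type, while here the pair $(g_s,g_u)$ is constrained to arise from a single surface diffeomorphism, so its stable and unstable combinatorics are \emph{linked} (both are governed by the same transition matrix $\mathcal{B}$). One must check that the genericity statement survives this linkage, i.e.\ that the realizable perturbations of $(g_s,g_u)$ through diffeomorphisms are rich enough to reach the open dense subset of $V$; this is exactly the content established in \cite{MY-10} (the "surface" version of the stable intersection theorem), so in the write-up I would simply invoke that paper rather than reprove it. A secondary technical point is the passage from $\Diff^\infty$ to the analytic realizations used in Section 3, but since $\mathcal{U}$ can be taken inside $\Diff^\infty(M)$ and analytic diffeomorphisms are dense (Theorem \ref{M}), no real difficulty arises. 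I therefore expect the proof to consist of one or two paragraphs of citation and bookkeeping rather than new arguments.
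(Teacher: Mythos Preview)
The paper does not prove this statement: it is explicitly introduced as one of ``the ingredients outside this paper'' at the start of Section~4, and the theorem is simply stated with the parenthetical ``cf.\ \cite{MY-10} and \cite{MY-01}'' in lieu of any argument. Your proposal---sketching why the Moreira--Yoccoz abstract genericity result for pairs of regular Cantor sets transfers to the surface-diffeomorphism setting, and flagging the linkage of the stable/unstable combinatorics through the common transition matrix $\mathcal{B}$ as the point requiring care---is a reasonable gloss on what the cited papers actually do, but it goes well beyond the paper itself, which offers nothing to compare against. In short: there is no proof in the paper, only a citation; your write-up would serve as expository commentary on that citation rather than as a competing argument.
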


Using \cite{MY-10}, we can write the main theorem in \cite{MR-15} in the following way

\begin{theorem}[MR-15]\label{MR-15}
	Let $\Lambda$ be a horseshoe associated to a $C^2$-diffeomorphism $\varphi$ such that $HD(\Lambda)>1$. Then, there is 
	a $C^2$-neighbourhood $\mathcal{U}\ni \varphi$ and an open and dense subset $\mathcal{U}^{\ast}(\Lambda)$ such that, if $\Lambda_{\psi}$ denote the hyperbolic continuation of $\Lambda$ associated to $\psi\in \mathcal{U}^{\ast}(\Lambda)$, there is an open and dense set $H_{\psi}(\Lambda_{\psi})\subset C^k(M,\mathbb{R})$, $k\ge 1$, such that for all $f\in H_{\psi}(\Lambda_{\psi})$
	$$\inte(L_f(\Lambda_{\psi}))\neq \emptyset$$
	
\end{theorem}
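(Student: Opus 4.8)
The plan is to derive the statement by feeding the Moreira--Yoccoz stable intersection theorem (Theorem \ref{MY-10}) into the mechanism that turns a stable intersection of the stable and unstable Cantor sets into an interval contained in $L_f(\Lambda_\psi)$. First I would fix a saddle fixed point $p\in\Lambda$ and the expanding maps $g_s,g_u$ of type $\Sigma_{\mathcal{B}}$ defining $K^s=W^s(p)\cap\Lambda$ and $K^u=W^u(p)\cap\Lambda$. Since $HD(\Lambda)=HD(K^s)+HD(K^u)>1$ by (\ref{eq.1}), Theorem \ref{MY-10} provides a neighbourhood $\mathcal{U}$ and an open dense $\mathcal{U}^{\ast}\subset\mathcal{U}$ so that for $\psi\in\mathcal{U}^{\ast}$ the pair $(g_s,g_u)$ attached to $\psi$ lies in the recurrent-compact class $V$; in particular $K^s_\psi$ and $K^u_\psi$ have stable intersection, and this property is $C^1$-robust under perturbation of the pair. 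The statement of Theorem \ref{MY-10} is phrased in $\Diff^\infty$, so I would pass to the $C^2$ setting using that $C^\infty$ diffeomorphisms are $C^2$-dense together with the $C^1$-openness of stable intersection: the $C^2$-interior of $\{\psi : K^s_\psi, K^u_\psi \text{ have stable intersection}\}$ is then $C^2$-open and dense in a $C^2$-neighbourhood of $\varphi$, and this is $\mathcal{U}^{\ast}(\Lambda)$.

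Next I would set up the symbolic picture that lets $f$ "see" the two Cantor sets. Using Theorem \ref{p} choose a Markov partition of $\Lambda_\psi$ of small diameter, together with the conjugacy $\Pi:\Lambda_\psi\to\Sigma_{\mathcal{B}}$; every $x\in\Lambda_\psi$ is a bi-infinite admissible sequence. One shows that for a $C^k$-generic $f$ there is a finite admissible central block such that, among points $x$ whose orbit realizes its Lagrange value $l_{f,\psi}(x)=\limsup_n f(\psi^n x)$ at time $0$ inside that block, the value is, up to an exponentially small $C^1$-controlled error, a fixed smooth function of the backward one-sided itinerary --- which ranges over a copy of $K^s_\psi$ --- and the forward one-sided itinerary --- which ranges over a copy of $K^u_\psi$. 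Since $\Lambda_\psi$ is a transitive horseshoe, one can prescribe the rest of the orbit so that $f(\psi^n x)$ stays below this central value for all other $n$ while the block recurs infinitely often, so the local structure of $L_f(\Lambda_\psi)$ near this value coincides with the image of $K^s_\psi\times K^u_\psi$ under this smooth map. Choosing $f$ so that this map is a $C^1$ submersion near the relevant point --- a transversality condition on the $1$-jet of $f$ along the stable and unstable directions at one periodic configuration, which is $C^k$-open and achievable by an arbitrarily small perturbation of $f$ supported near that orbit --- the stable intersection of $K^s_\psi$ and $K^u_\psi$ forces its image to contain an interval, and both the submersion condition and the stable intersection persist under small perturbations; this yields the open dense $H_\psi(\Lambda_\psi)\subset C^k(M,\mathbb{R})$ on which $\inte(L_f(\Lambda_\psi))\neq\emptyset$.

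The main obstacle is precisely this middle step: stable intersection is an intrinsic property of the one-dimensional Cantor sets, whereas $L_f$ is defined through $\limsup$'s of $f$ on the surface, so one must (i) justify the continued-fraction-type expansion expressing $l_{f,\psi}$, near the chosen configuration, as a genuine $C^1$ function of the pair of one-sided itineraries with uniformly bounded distortion, so that ``the Cantor sets intersect stably'' literally becomes ``a submersion of $K^s_\psi\times K^u_\psi$ has an interval in its image''; (ii) check that the needed nondegeneracy of $f$ is simultaneously $C^k$-dense and $C^k$-open; and (iii) keep all constants uniform as $\psi$ varies over $\mathcal{U}^{\ast}$ and $f$ over $H_\psi$, so that the conclusion is open rather than merely residual. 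Carrying out (i)--(iii) is the content of the construction in \cite{MR-15}, and the only missing hypothesis there --- that $(g_s,g_u)\in V$ --- is exactly what Theorem \ref{MY-10} now supplies whenever $HD(\Lambda)>1$; combining the two gives the theorem.
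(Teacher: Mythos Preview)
This theorem is not proved in the paper: it is quoted from \cite{MR-15} as an external ingredient (the section opens with ``We begin this section giving all the ingredients outside this paper''), and the paper merely summarizes the mechanism in the enumerated items (1)--(4) following the statement. Your sketch is a faithful outline of that mechanism --- combine Theorem~\ref{MY-10} to put the pair $(g_s,g_u)$ in $V$, then use the \cite{MR-15} construction (unique maximum with $Df(z)e^{s,u}_z\neq 0$, local product structure, and item~(4)) to extract an interval in $L_f$ --- so there is nothing substantive to compare; you have reconstructed the cited argument rather than diverged from it.

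One small point: your handling of the $C^\infty$ versus $C^2$ discrepancy in Theorem~\ref{MY-10} via density plus $C^1$-openness of stable intersection is reasonable but is not something the present paper addresses; it simply invokes \cite{MY-10} and \cite{MR-15} at face value, so any such refinement belongs to those references rather than to this paper.
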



\begin{corollary}\label{c}
	Let $\mathcal{U}=\{\varphi\in \Diff^2(M); HD(\Lambda_{\varphi})>1\}$. There is a residual set $\mathcal{R}\subset \mathcal{U}$ such that if $\psi\in \mathcal{R}$ and $\Lambda_{\psi}$ is a horseshoe of $\psi$ with $HD(\Lambda_{\psi})>1$ and $k\ge 1$, then there is a residual set $H_{\psi}\subset C^k(M,\mathbb{R})$, such that for every $f\in H_{\psi}$ we have
	$$\inte(L_f(\tilde{\Lambda}_{\psi}))\neq \emptyset$$
	and
	$$\inte(M_f(\tilde{\Lambda}_{\psi}))\neq \emptyset,$$
	for all subhorseshoe $\tilde{\Lambda}_{\psi}$ of $\Lambda_{\psi}$ such that $HD(\tilde{\Lambda}_{\psi})>1$. 
\end{corollary}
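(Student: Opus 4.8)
The statement is a routine upgrade of Theorem \ref{MR-15} from a single horseshoe to all subhorseshoes of large enough Hausdorff dimension, combined with the passage from an open and dense set to a residual set. The strategy is to first localize around each $\varphi$ via Theorem \ref{MR-15}, then enumerate the countably many hyperbolic sets of finite type (Remark \ref{o1}) to build a countable intersection, and finally glue together by a Lindel\"of argument as in Corollary \ref{c2}.

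First I would fix $\varphi \in \mathcal{U}$ and list its subhorseshoes $S(\Lambda_\varphi)=\{\Lambda^1,\Lambda^2,\dots\}$, which is countable by Remark \ref{o1}. For the finitely many combinatorial types that persist under small perturbation, each $\Lambda^j$ has a hyperbolic continuation $\Lambda^j_\psi$ on a uniform $C^2$-neighbourhood $\mathcal{U}_\varphi \ni \varphi$ (shrinking if necessary so that all relevant continuations are defined and so that the hyperbolic continuation $\Lambda_\psi$ of $\Lambda_\varphi$ exists). For a given $j$ with $HD(\Lambda^j_\varphi)>1$, apply Theorem \ref{MR-15} to the horseshoe $\Lambda^j$: there is an open and dense $\mathcal{U}^\ast(\Lambda^j)\subset\mathcal{U}_\varphi$ such that for $\psi\in\mathcal{U}^\ast(\Lambda^j)$ there is an open and dense $H_\psi(\Lambda^j_\psi)\subset C^k(M,\mathbb{R})$ with $\inte(L_f(\Lambda^j_\psi))\neq\emptyset$ for all $f\in H_\psi(\Lambda^j_\psi)$. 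Since $M_f\supseteq L_f$ always holds (the dynamical analogue of $L\subset M$, because $\sup_{n\in\mathbb{Z}} f(\varphi^n(x)) \geq \limsup_{n\to\infty} f(\varphi^n(x))$), the same $H_\psi$ gives $\inte(M_f(\Lambda^j_\psi))\neq\emptyset$ as well. To deal with the parameter $j$ ranging over subhorseshoes whose dimension crosses $1$ only after perturbation, I would use that $\psi\mapsto HD(\Lambda^j_\psi)$ is continuous, so $\{\psi : HD(\Lambda^j_\psi)>1\}$ is open; on this open set the above applies, and off its closure there is nothing to prove for $\Lambda^j$.

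Next I would assemble the residual set. Inside $\mathcal{U}_\varphi$, set $\mathcal{R}_\varphi=\bigcap_{j} \mathcal{U}^\ast(\Lambda^j)$, a countable intersection of open dense sets, hence residual in $\mathcal{U}_\varphi$. Now one must handle the function $f$: for $\psi\in\mathcal{R}_\varphi$ we want a single residual $H_\psi\subset C^k(M,\mathbb{R})$ that works simultaneously for every subhorseshoe $\tilde\Lambda_\psi$ with $HD(\tilde\Lambda_\psi)>1$. Each hyperbolic set of finite type decomposes (Remark \ref{description1}) into periodic orbits, horseshoes and transient components, and $HD$ of the whole is the max of the pieces; a transient component $W^s(\Lambda')\cap W^u(\Lambda'')$ has $HD = HD(K^s(\Lambda'))+HD(K^u(\Lambda''))$ by \eqref{eq.transient}, and $L_f$, $M_f$ restricted to it are governed by the images under $f$ of the relevant stable/unstable data, so nonempty interior follows once it holds for the horseshoe pieces. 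Hence it suffices to intersect over the countably many subhorseshoes $\Lambda^j_\psi$ with dimension $>1$: put $H_\psi=\bigcap_{j : HD(\Lambda^j_\psi)>1} H_\psi(\Lambda^j_\psi)$, a countable intersection of open dense subsets of $C^k(M,\mathbb{R})$, hence residual.

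Finally, to produce the global residual set $\mathcal{R}\subset\mathcal{U}$, cover $\mathcal{U}=\bigcup_{\varphi\in\mathcal{U}}\mathcal{U}_\varphi$; since $\Diff^2(M)$ is a separable metric space, $\mathcal{U}$ is Lindel\"of, so there is a countable subcover $\mathcal{U}=\bigcup_{i\in\mathbb{N}}\mathcal{U}_{\varphi_i}$, and $\mathcal{R}=\bigcup_{i\in\mathbb{N}}\mathcal{R}_{\varphi_i}$ is residual in $\mathcal{U}$ (each $\mathcal{R}_{\varphi_i}$ is residual in the open set $\mathcal{U}_{\varphi_i}$, and a countable union of such sets is residual in the union, as in Corollary \ref{c2}). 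Given $\psi\in\mathcal{R}$, choose $i$ with $\psi\in\mathcal{R}_{\varphi_i}$ and take $H_\psi$ as above; then for every subhorseshoe $\tilde\Lambda_\psi$ of $\Lambda_\psi$ with $HD(\tilde\Lambda_\psi)>1$ and every $f\in H_\psi$ we get $\inte(L_f(\tilde\Lambda_\psi))\neq\emptyset$ and $\inte(M_f(\tilde\Lambda_\psi))\neq\emptyset$.

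\textbf{Main obstacle.} The one genuinely delicate point is the uniformity needed to make the $f$-quantifier work: a priori Theorem \ref{MR-15} gives, for each subhorseshoe separately, a residual set of $f$'s, and intersecting over infinitely many subhorseshoes is only legitimate because there are countably many of them (Remark \ref{o1}) and each contributing set is open and dense. One must also make sure the neighbourhood $\mathcal{U}_\varphi$ can be taken independent of $j$ — this is exactly the content built into Corollary \ref{c1}/Proposition \ref{2}, where $\mathcal{U}$ "does not depend on $\tilde\Lambda$" — and that the bookkeeping for transient components does not introduce new subhorseshoes outside the countable list. Everything else is the standard Baire-category plus Lindel\"of packaging already used for Corollary \ref{c2}.
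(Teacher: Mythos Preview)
Your proposal is correct and follows essentially the same route as the paper's proof: enumerate the countably many subhorseshoes (Remark~\ref{o1}), apply Theorem~\ref{MR-15} to each one of dimension greater than $1$, take countable intersections via Baire, and glue by a Lindel\"of argument as in Corollary~\ref{c2}. The paper organizes this by fixing the combinatorial type first and explicitly invokes Corollary~\ref{c2} to dispose of the meager set $\{\psi:HD(\Lambda^j_\psi)=1\}$, whereas your bookkeeping absorbs that boundary case directly into the Baire step; your paragraph on transient components is unnecessary here, since the statement concerns subhorseshoes only (that discussion becomes relevant for the later corollary on general hyperbolic sets of finite type).
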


\begin{proof} 
	Firstly we observe that 
	there are only a countable many choice of the combinatorics of a subhorseshoe, that is, its associated subshift. Fix any combinatorics $c$. For each $\varphi$ we can associated $\Lambda^c_{\varphi}$ the horseshoe with this combinatorics. 
	
	\begin{flushleft}
		\textbf{Claim.} There is a residual set $\mathcal{R}^c$ such that for $\varphi\in \mathcal{R}^c$ and $HD(\Lambda^c_{\varphi})>1$ there is an open and dense set $H_{\varphi}(\Lambda^c_{\varphi})\subset C^k(M,\mathbb{R})$ we have that
		$$\inte(L_f(\Lambda^c_{\varphi}))\neq \emptyset$$
		and
		$$\inte(M_f(\Lambda^c_{\varphi}))\neq \emptyset.$$
	\end{flushleft}
	If $HD(\Lambda^c_{\varphi})>1$ for any $\varphi$, by the Theorem \ref{MR-15} there is an open $\mathcal{U}_{\varphi}\ni \varphi$ and a residual set $\mathcal{R}_{\varphi}\subset \mathcal{U}_{\varphi}$ such that the claim is true. Take a countable cover of $\{\varphi\in \mathcal{U}; HD(\Lambda^c_{\varphi})>1\}$, say $\mathcal{U}_{\varphi_1}\cup \mathcal{U}_{\varphi_2}\cup...$. So $\mathcal{R}^c_1=\bigcup_{j\ge 1}\mathcal{R}_{\varphi_j}$ is a residual subset of $\{\varphi\in \mathcal{U}; HD(\Lambda^c_{\varphi})>1\}$. 
	Take $\mathcal{R}^c=\{\varphi\in \mathcal{U}; HD(\Lambda^c_{\varphi})<1\}\cup \mathcal{R}^c_1$. This prove the claim. By Corollary \ref{c2} there is a residual set in $\mathcal{U}$ such that there is no subhorseshoe with Hausdorff dimension $1$. Since there are only a countable many choice of the combinatorics of subhorseshoe and countable intersection of residual sets is residual we have the proof of the corollary.
\end{proof}

We will need of a version of the above theorem and of the corollary \ref{c} for hyperbolic sets of finite type. We summarize some facts on the construction to the proof of the theorem \ref{MR-15}. Firstly consider $\mathcal{M}_f(K)=\{z\in K; f(z)\ge f(y), \ \forall y\in K\}.$ Next we put
$$H_{\varphi}(\tilde{\Lambda})=\{f\in C^1(M,\mathbb{R});  \mathcal{M}_{\varphi}(\tilde{\Lambda})=\{z\} \ \mbox{and} \ Df(z)e^{s,u}_z\neq 0\},$$ 
where $\tilde{\Lambda}$ is a hyperbolic set of finite type contained in a horseshoe $\Lambda$. 

\begin{enumerate}
		
	\item[(1)] 	If $f\in C^1(M,\mathbb{R})$ and $\mathcal{M}_{\varphi}(\tilde{\Lambda})=\{z\}$ then $z\in \partial_s\Lambda\cap \partial_u \Lambda$.
	
	\smallskip
	
	\item[(2)] We have that $$H_{\varphi}(\tilde{\Lambda})=\{f\in C^1(M,\mathbb{R});  \mathcal{M}_{\varphi}(\tilde{\Lambda})=\{z\} \ \mbox{and} \ Df(z)e^{s,u}_z\neq 0\},$$ 
	is open and dense.
	
	\smallskip
	
	\item[(3)] If $\tilde{\Lambda}$ is a hyperbolic set of finite type for $\varphi$ and $f\in H_{\varphi}(\tilde{\Lambda})$ then, given $\epsilon>0$ there are $j_0\in \mathbb{N}$, a rectangle $R$ and a diffeomorphism $A$ defined on a neighbohood of $\tilde{\Lambda}_{\varphi}\cap R$ such that
	$$f(\varphi^j(A(\tilde{\Lambda}_{\varphi}\cap R)))\subset M_{f}(\tilde{\Lambda}_{\varphi})$$ and  
	$$HD(\varphi^j(A(\tilde{\Lambda}_{\varphi}\cap R))>HD(\tilde{\Lambda}_{\varphi})-\epsilon.$$
	Moreover,
	$$Df(\varphi^j(A(z)))e^{s,u}(z)\neq 0.$$
	
	\smallskip
	
	\item[(4)]  	If $\tilde{\Lambda}$ is such that $(K^s(\Lambda'),K^u(\Lambda''))\in V$ then 
	$$\inte f(\tilde{\Lambda})\neq \emptyset.$$ 
	
\end{enumerate}

The proofs are analogous to the ideas of proof of Theorem \ref{MR-15}, for instance, note that $\tilde{\Lambda}$ also has the local product structure and from this it follows (1). 

Following this, we have a slightly different form of Moreira-Roma\~na's Theorem

\begin{theorem}\label{MR-adapted}
	Let $\Lambda$ be a horseshoe associated to a $C^2$-diffeomorphism $\varphi$ such that $HD(\Lambda)>1$. Then, there is 
	a $C^2$-neighbourhood $\mathcal{U}\ni \varphi$ and an open and dense subset $\mathcal{U}^{\ast}(\Lambda)$ such that, if $\Lambda_{\psi}$ denotes the hyperbolic continuation of $\Lambda$ associated to $\psi\in \mathcal{U}^{\ast}(\Lambda)$, there is an open and dense set $H_{\psi}(\Lambda_{\psi})\subset C^k(M,\mathbb{R})$, $k\ge 1$, such that for all $f\in H_{\psi}(\Lambda_{\psi})$
	$$\inte(M_f(\Lambda_{\psi}))\neq \emptyset.$$
\end{theorem}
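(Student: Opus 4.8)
The plan is to mimic the proof of Theorem \ref{MR-15} (Moreira-Roma\~na) exactly, replacing the Lagrange spectrum $L_f$ by the Markov spectrum $M_f$ at each step, and using the facts (1)--(4) listed above in place of their Lagrange-spectrum analogues. The key observation that makes this transfer painless is item (4): when the pair of stable/unstable Cantor sets $(K^s,K^u)$ lies in the recurrent-compact set $V$ of Moreira-Yoccoz, then $\inte f(\Lambda) \neq \emptyset$, i.e. $f$ restricted to the horseshoe already has image with nonempty interior. Since $f(\Lambda) \subset M_f(\Lambda)$ always (because $m_{f,\varphi}(x) \ge f(x)$ and in fact for $x \in \mathcal{M}_f(\Lambda)$ one has $m_{f,\varphi}(x) = f(x)$, and more generally the full image $f(\Lambda)$ is captured by Markov values of suitable orbits), once $\inte f(\Lambda) \neq \emptyset$ we immediately get $\inte M_f(\Lambda) \neq \emptyset$. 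Note this is precisely where the Markov case is actually \emph{easier} than the Lagrange case: for the Lagrange spectrum one must work to realize an interval as a set of $\limsup$ values, whereas for the Markov spectrum the sup over the whole orbit trivially dominates $f$ at every single point, so an interval in $f(\Lambda)$ suffices.

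Concretely, first I would invoke Theorem \ref{MY-10}: shrinking the $C^\infty$-neighbourhood $\mathcal{U}$ of $\varphi$ if necessary, there is an open and dense $\mathcal{U}^* \subset \mathcal{U}$ such that for $\psi \in \mathcal{U}^*$ the associated pair of expanding maps $(g_s, g_u)$ (equivalently the Cantor sets $(K^s, K^u)$) belongs to $V$; this uses the hypothesis $HD(\Lambda) > 1$, i.e. $HD(K^s) + HD(K^u) > 1$, which is stable under the hyperbolic continuation over the (possibly smaller) neighbourhood. Second, for each such $\psi$, I would take $H_\psi(\Lambda_\psi) = H_\psi = \{ f \in C^k(M,\mathbb{R}) : \mathcal{M}_f(\Lambda_\psi) = \{z\} \text{ and } Df(z) e^{s,u}_z \neq 0 \}$, which by fact (2) is open and dense in $C^k(M,\mathbb{R})$. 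Third, for $f \in H_\psi$, I would apply fact (4): since $(K^s(\Lambda_\psi), K^u(\Lambda_\psi)) \in V$, we have $\inte f(\Lambda_\psi) \neq \emptyset$, and since $f(\Lambda_\psi) \subset M_f(\Lambda_\psi)$ this yields $\inte M_f(\Lambda_\psi) \neq \emptyset$, as required. (If one prefers to avoid the subtlety that $V$-membership alone gives the conclusion, one can instead run the construction in fact (3): produce $j_0$, a rectangle $R$ and the diffeomorphism $A$ with $f(\varphi^{j_0}(A(\Lambda_\psi \cap R))) \subset M_f(\Lambda_\psi)$ and the derivative transversality $Df(\varphi^{j_0}(A(z))) e^{s,u}(z) \neq 0$, then apply the Moreira-Yoccoz machinery to the Cantor sets obtained by intersecting with $R$ — whose dimensions are within $\epsilon$ of those of $K^{s,u}(\Lambda_\psi)$, hence still summing to more than one — to conclude the image contains an interval.)

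The only genuine point requiring care — and the step I expect to be the main obstacle — is verifying that the relevant pair of Cantor sets associated to the rectangle $R$ in fact (3) still lies in (or can be perturbed into) the recurrent set $V$, i.e. that restricting to a subrectangle and passing through $A$ and $\varphi^{j_0}$ does not destroy $V$-membership. This is handled by taking the diameter of $R$ small and $j_0$ large so that the pieces $\varphi^{j_0}(A(\Lambda_\psi \cap R))$ are, up to bounded distortion, affinely equivalent to large pieces of the original Cantor sets, combined with the fact that $V$ is open and the dimension estimate $HD > \epsilon$-close to the original keeps the sum above one; this is exactly the argument already carried out in \cite{MR-15} and \cite{MY-10}, and nothing new is needed. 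Since all four facts (1)--(4) were asserted above to hold with proofs "analogous to the ideas of proof of Theorem \ref{MR-15}", the proof of the present theorem is a formal rearrangement: combine Theorem \ref{MY-10} to get $\mathcal{U}^*(\Lambda)$, take $H_\psi(\Lambda_\psi)$ as in (2), and conclude via (3)--(4) that the Markov spectrum has nonempty interior for every $f \in H_\psi(\Lambda_\psi)$.
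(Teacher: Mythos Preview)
Your proposal follows the same route as the paper: the theorem is stated immediately after facts (1)--(4) with no separate proof, the intended argument being precisely the combination of Theorem~\ref{MY-10} (to obtain $\mathcal{U}^{\ast}$), fact (2) (to obtain the open-and-dense $H_{\psi}$), and facts (3)--(4) (to conclude nonempty interior). Your write-up of this combination is accurate.

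One point deserves correction, however. The inclusion $f(\Lambda_\psi)\subset M_f(\Lambda_\psi)$ that you invoke in your ``direct'' argument is \emph{false} in general: the inequality $m_{f,\varphi}(x)\ge f(x)$ does not place $f(x)$ in the Markov spectrum, and your parenthetical ``more generally the full image $f(\Lambda)$ is captured by Markov values of suitable orbits'' is unjustified (e.g.\ the minimum value of $f$ on $\Lambda$ is typically not a Markov value). Thus your first route does not work as written. Fortunately, your second route --- the one you label an alternative --- is the actual argument: fact (3) is precisely what supplies the inclusion
\[
f\bigl(\psi^{j_0}(A(\Lambda_\psi\cap R))\bigr)\subset M_f(\Lambda_\psi),
\]
together with the transversality $Df(\psi^{j_0}(A(z)))e^{s,u}(z)\neq 0$ and the dimension estimate $HD(\psi^{j_0}(A(\Lambda_\psi\cap R)))>HD(\Lambda_\psi)-\epsilon>1$, so that fact (4) (applied to the restricted Cantor pair, which remains in $V$ by the Moreira--Yoccoz stability you correctly identify) yields an interval inside $M_f(\Lambda_\psi)$. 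With the first shortcut removed, your proof matches the paper's intended argument.
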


\begin{corollary}\label{c1}
	Let $\mathcal{U}=\{\varphi\in \Diff^2(M); HD(\Lambda_{\varphi})>1\}$. There is a residual set $\mathcal{R}\subset \mathcal{U}$ such that if $\psi\in \mathcal{R}$ and $\Lambda_{\psi}$ is a horseshoe of $\psi$ with $HD(\Lambda_{\psi})>1$ and $k\ge 1$, then there is a residual set $H_{\psi}\subset C^k(M,\mathbb{R})$, such that for every $f\in H_{\psi}$ we have
	$$\inte(M_f(\tilde{\Lambda}_{\psi}))\neq \emptyset,$$
	for all hyperbolic set of finite type $\tilde{\Lambda}_{\psi}$ of $\Lambda_{\psi}$ such that $HD(\Lambda_{\psi})>1$. 
\end{corollary}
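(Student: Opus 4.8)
The plan is to mirror exactly the argument used to prove Corollary \ref{c} in the previous paragraph, replacing ``subhorseshoe'' by ``hyperbolic set of finite type'' throughout and feeding in Theorem \ref{MR-adapted} (the Markov-spectrum version) together with the enumerated facts (1)--(4) about $H_\varphi(\tilde\Lambda)$. First I would fix a combinatorial type $c$ of a hyperbolic set of finite type; by Remark \ref{o1} there are only countably many such types, since each is determined by a finite submatrix of some $B$ coming from one of the countably many Markov partitions. For each diffeomorphism $\varphi$ (in the open set where the relevant hyperbolic continuation makes sense), let $\tilde\Lambda^c_\varphi$ denote the hyperbolic set of finite type with combinatorics $c$.

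Next I would establish the claim: there is a residual set $\mathcal{R}^c\subset\mathcal{U}$ such that for $\varphi\in\mathcal{R}^c$ with $HD(\tilde\Lambda^c_\varphi)>1$ there is an open and dense set $H_\varphi(\tilde\Lambda^c_\varphi)\subset C^k(M,\mathbb{R})$ with $\inte(M_f(\tilde\Lambda^c_\varphi))\neq\emptyset$ for all $f$ in it. This follows from Theorem \ref{MR-adapted} applied to the horseshoe containing $\tilde\Lambda^c_\varphi$, combined with facts (1)--(4): fact (4) gives that once the pair $(K^s(\Lambda'),K^u(\Lambda''))$ lies in the set $V$ from \cite{MY-10}, one has $\inte f(\tilde\Lambda)\neq\emptyset$, and Theorem \ref{MY-10} guarantees that on an open dense subset of a small neighbourhood the corresponding pair of expanding maps is in $V$. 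As in the proof of Corollary \ref{c}, for each $\varphi$ with $HD(\tilde\Lambda^c_\varphi)>1$ one gets an open $\mathcal{U}_\varphi\ni\varphi$ and a residual $\mathcal{R}_\varphi\subset\mathcal{U}_\varphi$; by Lindel\"of one extracts a countable subcover $\mathcal{U}_{\varphi_1},\mathcal{U}_{\varphi_2},\dots$ of $\{\varphi\in\mathcal{U}; HD(\tilde\Lambda^c_\varphi)>1\}$, sets $\mathcal{R}^c_1=\bigcup_{j\ge 1}\mathcal{R}_{\varphi_j}$, and takes $\mathcal{R}^c=\{\varphi\in\mathcal{U}; HD(\tilde\Lambda^c_\varphi)<1\}\cup\mathcal{R}^c_1$; by Corollary \ref{c2} (or rather by Proposition \ref{2}) there is a residual set on which no hyperbolic set of finite type has dimension exactly $1$, so the ``$<1$'' and ``$>1$'' cases together exhaust a residual set.

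Finally, since the intersection of countably many residual sets is residual, I would put $\mathcal{R}=\bigcap_c\mathcal{R}^c$ intersected with the residual set from Corollary \ref{c2} ensuring no hyperbolic set of finite type has dimension $1$. For $\psi\in\mathcal{R}$ and each $k\ge 1$, the desired $H_\psi\subset C^k(M,\mathbb{R})$ is obtained as a countable intersection over all combinatorial types $c$ of the open dense sets $H_\psi(\tilde\Lambda^c_\psi)$ (only those with $HD(\tilde\Lambda^c_\psi)>1$ impose a condition; for the others there is nothing to prove), which is again residual; then for every $f\in H_\psi$ and every hyperbolic set of finite type $\tilde\Lambda_\psi$ of $\Lambda_\psi$ with $HD(\tilde\Lambda_\psi)>1$ we get $\inte(M_f(\tilde\Lambda_\psi))\neq\emptyset$.

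The step I expect to be the real obstacle is fact (3) in the adapted form for hyperbolic sets of finite type, i.e. showing that for $f\in H_\varphi(\tilde\Lambda)$ one can find an iterate $\varphi^j$ of a controlled image $A(\tilde\Lambda\cap R)$ that is mapped by $f$ into $M_f(\tilde\Lambda)$ while losing at most $\epsilon$ in Hausdorff dimension and keeping $Df\cdot e^{s,u}\neq 0$; the subtlety is that a hyperbolic set of finite type need not be transitive and decomposes (Remark \ref{description1}) into subhorseshoes, periodic orbits and transient sets, so one must locate the maximizing point $z$ of $f$ in a piece realizing the full dimension $HD(\tilde\Lambda)=\max_i HD(\tilde\Lambda_i)$ and run the argument of \cite{MR-15} near that piece, using the local product structure that hyperbolic sets of finite type still enjoy. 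Once this is in place, as the excerpt already asserts, ``the proofs are analogous to the ideas of proof of Theorem \ref{MR-15}'', and the rest is the Baire-category bookkeeping above.
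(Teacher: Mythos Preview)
Your proposal is correct and matches the paper's intended approach: the paper does not write out a separate proof for this corollary, placing it immediately after Theorem~\ref{MR-adapted} and facts (1)--(4) with the evident implication that one repeats verbatim the proof of Corollary~\ref{c} (countably many combinatorial types, Lindel\"of, Corollary~\ref{c2} to kill dimension exactly $1$, intersect residual sets), substituting ``hyperbolic set of finite type'' for ``subhorseshoe'' and Theorem~\ref{MR-adapted} for Theorem~\ref{MR-15}. Your identification of fact~(3) as the point requiring care is also accurate, and the paper handles it exactly as you suggest---by noting that hyperbolic sets of finite type retain local product structure and then asserting that ``the proofs are analogous to the ideas of proof of Theorem~\ref{MR-15}''.
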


\section{Phase Transition Theorem for the Markov Spectrum}

Recall that the Lagrange spectrum of a continuous function $f:M\rightarrow \mathbb{R}$ and a $C^2$-diffeomorphism $\varphi:M\rightarrow M$ associated to a compact invariant set $K$ is defined by
$$L_{f}(K)=\{\ell_{f,\varphi}(x);x\in K\},$$
where $\ell_{f,\varphi}(x)=\limsup_{n\to \infty}f(\varphi^n(x))$.
The set $$M_f(K)=\{m_{f,\varphi}(x);x\in K\},$$ 
where $m_{f,\varphi}(x)=\sup_{n\in \mathbb{Z}} f(\varphi^n(x))$, is called Markov spectrum of $(f,K)$ and it is closely related to $L_f(K)$. We have, by example, that $L_f(K)\subset M_f(K)$ (cf. \cite{MR-15}).

Suppose that $\Lambda$ is a horseshoe associated to $\varphi$. We define for $t\in \mathbb{R}$
$$\Lambda_t=\bigcap_{n\in \mathbb{Z}}\varphi^n(\Lambda\cap f^{-1}(-\infty,t])).$$

\begin{remark}\label{r3.1} We observe that if $F$ is a closed invariant set of $M$ then 
	$$L_f(F)\subset f(F).$$ In fact, if $x\in F$ then $\varphi^n(x)\in F$ for all $n\in \mathbb{Z}$ and so $f(\varphi^n(x))\in f(F)$, since $f(F)$ is a compact set on the real line, it follows that $\ell_{f,\varphi}(x)=\limsup_nf(\varphi^n(x))\in f(F)$. Next 
	we note that 
	$$L_f(\Lambda)\cap (-\infty,t] \subset f(\Lambda_t).$$
	Since $L_f(\Lambda)\subset M_f(\Lambda)$ it is sufficient to show that $$M_f(\Lambda)\cap (-\infty, t]\subset f(\Lambda_t).$$
	Take $m\in M_f(\Lambda)\cap (-\infty,t]$. Thus, $m=\sup f(\varphi^n(x))\le t$, $x\in \Lambda$. So, $f(\varphi^n(x))\le t$, $\forall n\in \mathbb{Z}$. Therefore $\varphi^n(x)\in f^{-1}((-\infty,t])\cap \Lambda, \forall n\in \mathbb{Z}$. This implies that $x\in \Lambda_t$. Since $\varphi(\Lambda_t)=\Lambda_t$ we have $\omega(x)\cup \alpha(x)\subset \Lambda_t$, where $\omega(x)$ and $\alpha(x)$ are the  $\omega$-limit set and $\alpha$-limite set of $x$, respectively. Thus, $m\in f(\Lambda_t)$.
\end{remark}

Given a $C^2$-diffeomorphism $\varphi:M\rightarrow M$, with a horseshoe $\Lambda_{\varphi}$ such that $HD(\Lambda_{\varphi})>1$ and $f\in C^k(M,\mathbb{R})$, $k\ge 1$, let $a(f,\varphi)=\sup\{t\in \mathbb{R}; HD(\Lambda_t)<1\}$ be the \textbf{ Markov's phase transition parameter} associated $(f, \varphi,\Lambda_{\varphi})$. We observe that, by the remark \ref{r3.1}, 
$$\inte{(M_f(\Lambda_{\varphi})\cap (-\infty,t))}=\emptyset,$$ 
for every $t<a$, because $HD(f(\Lambda_t))\le HD(\Lambda_t)<1$.

\begin{remark}\label{r3.2}
	For every $t\in \mathbb{R}$ and a continuous function $f:M\rightarrow \mathbb{R}$ we have that
	$$L_f(\Lambda_t)\subset L_f(\Lambda)\cap (-\infty,t] \ \mbox{and} \ M_f(\Lambda_t)\subset 
	M_f(\Lambda)\cap (-\infty,t].$$
	In fact, if $\ell\in L_f(\Lambda_t)$ then there is $x\in \Lambda_t$ such that $\ell=\ell_{f,\varphi}(x)$. Since $\Lambda_t$ is $\varphi$-invariant, we have $\varphi^n(x)\in \Lambda_t$, $\forall \ n\in \mathbb{Z}$. In particular, $f(\varphi^n(x))\leq t$. Hence $\ell\le t$. It is analogous to $M_f(\Lambda_t)\subset M_f(\Lambda)\cap (-\infty,t].$
	
\end{remark}
Fix a $C^k$-function $f$ from $M$ to $\mathbb{R}$, $k\ge 1$. Consider $\mathcal{U}$ be the open set and $\mathcal{R}\subset \mathcal{U}$ be the residual set as in the proposition \ref{2}, where $b=1$. Let $\Lambda=\Lambda_{\psi}$ be the hyperbolic continuation of $\Lambda_{\varphi}$. When $\psi\in \mathcal{R}$ we write $a=a(f,\psi)$.

\begin{proposition}
	For every $\psi\in \mathcal{R}$ we have $f^{-1}(a(f,\psi))\cap \Lambda\neq \emptyset$.
\end{proposition}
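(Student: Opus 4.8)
The plan is to show that the supremum defining $a=a(f,\psi)$ is actually attained by some point of the horseshoe $\Lambda$, in the sense that there is a point whose $f$-value equals $a$. First I would note that $\Lambda_t$ is closed for every $t$, being an intersection of closed sets, and that the family $\{\Lambda_t\}_{t\in\mathbb{R}}$ is nested and increasing in $t$, with $\Lambda_t=\bigcap_{s>t}\Lambda_s$ (right-continuity of the family) and $\Lambda=\bigcup_t\Lambda_t$. The function $t\mapsto HD(\Lambda_t)$ is therefore monotone non-decreasing. From the definition of $a$ we have $HD(\Lambda_t)<1$ for all $t<a$, hence $HD(\Lambda_a)\le 1$ by right-continuity of the family (since $\Lambda_a=\bigcap_{s>a}\Lambda_s=\bigcup_{t<a}\Lambda_t$ — actually $\Lambda_a=\bigcap_{s>a}\Lambda_s$ and $\bigcup_{t<a}\Lambda_t\subset\Lambda_a$, so one should be careful here and instead argue: for $t<a$, $HD(\Lambda_t)<1$, and $\Lambda_a\supset\Lambda_t$, so we want an upper bound on $HD(\Lambda_a)$; this is where the genericity of $\psi\in\mathcal{R}$ enters).

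The key point is that, by Proposition \ref{2} applied with $b=1$, for $\psi\in\mathcal{R}$ no hyperbolic set of finite type $\tilde\Lambda_\psi$ has Hausdorff dimension exactly $1$. Since $\Lambda_a$ is (contained in, or built from) hyperbolic sets of finite type — more precisely $\Lambda_a=\bigcap_n\varphi^n(\Lambda\cap f^{-1}(-\infty,a])$ can be approximated from above by hyperbolic sets of finite type $\Lambda^{(m)}$ obtained by looking at finitely many iterates, with $HD(\Lambda_a)=\lim_m HD(\Lambda^{(m)})$ — and each such $HD(\Lambda^{(m)})\neq 1$, one deduces that $HD(\Lambda_a)\neq 1$, hence $HD(\Lambda_a)<1$ or $HD(\Lambda_a)>1$. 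If $HD(\Lambda_a)<1$, then by upper semicontinuity of $t\mapsto\Lambda_t$ and continuity properties of Hausdorff dimension of hyperbolic sets one can find $t>a$ with $HD(\Lambda_t)<1$, contradicting the definition of $a$ as a supremum. Hence $HD(\Lambda_a)>1$, and in particular $\Lambda_a\neq\emptyset$.

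Now I would conclude as follows: suppose for contradiction that $f^{-1}(a)\cap\Lambda=\emptyset$. Since $f$ is continuous and $\Lambda$ compact, $f(\Lambda)$ is a compact subset of $\mathbb{R}$ missing $a$, so there is $\varepsilon>0$ with $f(\Lambda)\cap[a-\varepsilon,a+\varepsilon]=\emptyset$. But then $\Lambda\cap f^{-1}(-\infty,a]=\Lambda\cap f^{-1}(-\infty,a-\varepsilon]$, and consequently $\Lambda_a=\Lambda_{a-\varepsilon}$. This forces $HD(\Lambda_{a-\varepsilon})=HD(\Lambda_a)>1$, contradicting the fact that $HD(\Lambda_t)<1$ for every $t<a$. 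Therefore $f^{-1}(a)\cap\Lambda\neq\emptyset$, as claimed.

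The main obstacle I anticipate is justifying rigorously that $HD(\Lambda_a)\neq 1$: one must present $\Lambda_a$ as a decreasing limit (or a suitable combinatorial limit) of hyperbolic sets of finite type so that Proposition \ref{2} applies, and control the dimension in the limit — that is, verify $HD(\Lambda_a)=\lim_m HD(\Lambda^{(m)})$ where $\Lambda^{(m)}=\bigcap_{|n|\le m}\varphi^n(\Lambda\cap f^{-1}(-\infty,a])$ is a hyperbolic set of finite type (or a finite union thereof, via Remark \ref{description1}). This requires the fact that dimension of the nested intersection is the infimum/limit of the dimensions of the finite-type pieces, which uses the bounded-geometry/regularity of the Cantor sets involved; the rest of the argument is a routine compactness-and-monotonicity manipulation.
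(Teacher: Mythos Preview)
Your steps 1--2 do not go through as written. The sets $\Lambda^{(m)}=\bigcap_{|n|\le m}\varphi^n(\Lambda\cap f^{-1}(-\infty,a])$ are not hyperbolic sets of finite type in the sense of Definition~\ref{hsf}: that would require $\Lambda\cap f^{-1}(-\infty,a]$ to be a union of Markov rectangles, which in general it is not, so Proposition~\ref{2} does not apply to them. Even if one could replace the $\Lambda^{(m)}$ by genuine finite-type sets with $HD(\Lambda^{(m)})\neq 1$, a sequence of numbers avoiding $1$ may perfectly well converge to $1$, so $HD(\Lambda_a)\neq 1$ would still not follow. The right-continuity of $t\mapsto HD(\Lambda_t)$ invoked in step~2 is likewise not established anywhere.

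The good news is that your step~3 already contains a complete argument once you exploit the gap on \emph{both} sides of $a$. From $f(\Lambda)\cap[a-\varepsilon,a+\varepsilon]=\emptyset$ you get not only $\Lambda_a=\Lambda_{a-\varepsilon}$ but also $\Lambda_a=\Lambda_{a+\varepsilon}$, hence $\Lambda_{a-\varepsilon}=\Lambda_{a+\varepsilon}$. By the definition of $a$ and the monotonicity of $t\mapsto\Lambda_t$, one has $HD(\Lambda_{a-\varepsilon})<1$ while $HD(\Lambda_{a+\varepsilon})\ge 1$, an immediate contradiction. Steps 1--2 are then superfluous.

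The paper organises the same gap idea a bit differently: it observes that the gap gives $\Lambda_a=\Lambda_{a+\delta}$ and, crucially, that under the hypothesis $f^{-1}(a)\cap\Lambda=\emptyset$ the set $\Lambda_a$ \emph{is itself} a hyperbolic set of finite type --- because a sufficiently fine Markov partition has each rectangle lying either entirely inside $f^{-1}(-\infty,a]$ or entirely outside it, thanks to the gap. There is no approximation step: $\Lambda_a$ simply is such a set, so the genericity hypothesis $\psi\in\mathcal{R}$ applies directly to give $HD(\Lambda_a)\neq 1$, and the contradiction with the definition of $a$ follows from $\Lambda_a=\Lambda_{a+\delta}$.
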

\begin{proof}
	By contradiction, if $f^{-1}(a)\cap \Lambda=\emptyset$ then $d(f^{-1}(a),\Lambda)>0$, the distance between $f^{-1}(a)$ and $\Lambda$, and $a=a(f,\psi)$. By the continuity of $f$ there exist an $\delta_0>0$ such that $d(f^{-1}(a+\delta),\Lambda)>0$ for all $0<\delta< \delta_0$. Then, there is a Markov Partition $\mathcal{P}$ of $\Lambda$ such that $d(f^{-1}(a+\delta),\tilde{\mathcal{P}})>0$, $0<\delta< \delta_0$ for all Markov partition $\tilde{\mathcal{P}}\prec \mathcal{P}$, because there are Markov partition whose elements have arbitrarily small diameter. Therefore, the elements of $\tilde{\mathcal{P}}$ in $f^{-1}((-\infty,a+\delta))\cap \tilde{\mathcal{P}}$ are the same, independently of $0<\delta< \delta_0$. This implies that $\Lambda_a=\Lambda_{a+\delta}$ and it is a hyperbolic set of finite type contained at $\Lambda$. Since $HD(\tilde{\Lambda})\neq 1$ for all hyperbolic set of finite type of $\Lambda$, we must have $HD(\Lambda_a)<1$.
	Moreover, $f^{-1}((-\infty,a+\delta))\cap \tilde{\mathcal{P}}$ contains $\Lambda_a$. Since $\Lambda_{a+\delta}=\Lambda_a$ we have a contradiction with the definition of $a$.
	
	\includegraphics[scale=0.3, trim=0cm 12.31cm 0cm 0.7cm]{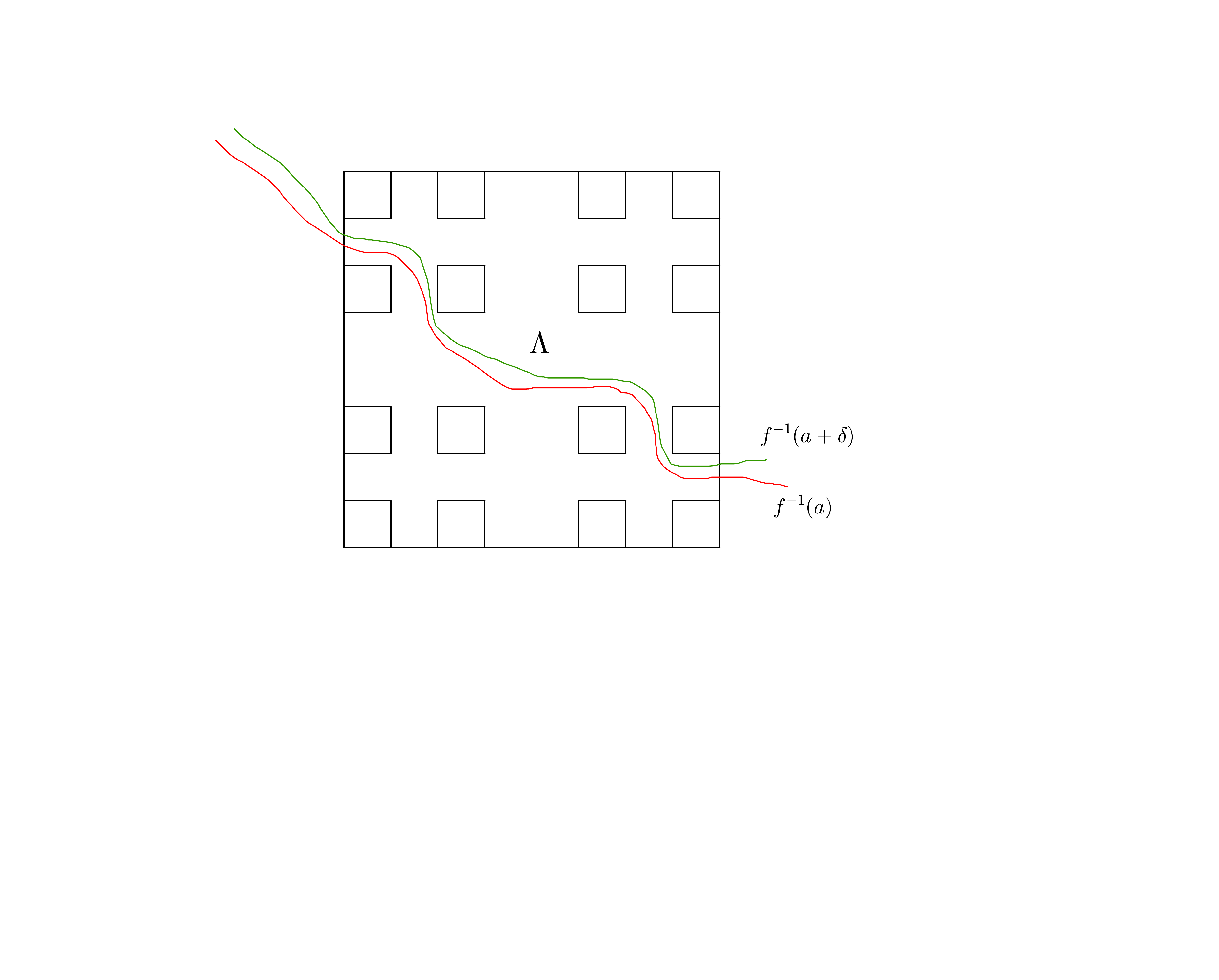}

\end{proof}


\begin{proposition}\label{a1}
	If $A=\{\delta\in (0,+\infty]; f^{-1}(a+\delta)\cap \Lambda\neq \emptyset\}$ then $0$ is an accumulation point of $A$. 
\end{proposition}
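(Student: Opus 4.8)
The plan is to argue by contradiction: suppose $0$ is \emph{not} an accumulation point of $A$. Since $A\subset(0,+\infty]$ and, by the previous proposition, $A$ is non-empty (indeed $+\infty\in A$ whenever $f$ attains values above $a$ somewhere on $\Lambda$; more to the point, there are points of $\Lambda$ with $f$-value exceeding $a$, so $A$ contains arbitrarily large $\delta$, hence $\sup A>0$), the assumption forces the existence of some $\delta_0>0$ such that $(0,\delta_0)\cap A=\emptyset$, i.e. $f^{-1}(a+\delta)\cap\Lambda=\emptyset$ for every $\delta\in(0,\delta_0)$. Equivalently, $\Lambda$ meets no level set $f^{-1}(a+\delta)$ with $0<\delta<\delta_0$, which means $f(\Lambda)\cap(a,a+\delta_0)=\emptyset$.

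From here I would run essentially the same compactness/Markov-partition argument used in the proof that $f^{-1}(a(f,\psi))\cap\Lambda\neq\emptyset$. The set $f^{-1}\big([a+\delta_0/2,\,\infty)\big)\cap\Lambda$ and the set $\Lambda$ are disjoint from the \emph{open} slab $f^{-1}\big((a,a+\delta_0/2)\big)$; combined with compactness of $\Lambda$ and continuity of $f$, this says that for all small $\delta>0$ the part of $\Lambda$ lying in $f^{-1}((-\infty,a+\delta])$ is exactly the part lying in $f^{-1}((-\infty,a])$, and in fact there is a Markov partition $\mathcal{P}$ of $\Lambda$ of small enough diameter — and hence every refinement $\tilde{\mathcal P}\prec\mathcal P$ — whose elements meeting $f^{-1}((-\infty,a+\delta])$ are the same independently of $\delta\in(0,\delta_0/2)$. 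Taking a sequence of such refinements with diameters tending to $0$, one concludes $\Lambda_a=\Lambda_{a+\delta}$ for all $\delta\in(0,\delta_0/2)$, and that this common set is a hyperbolic set of finite type contained in $\Lambda$ (it is cut out by finitely many combinatorial constraints, coming from the finitely many partition elements excluded by the condition $f\le a+\delta$).

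Now invoke Proposition \ref{2} with $b=1$: since $\psi\in\mathcal{R}$, every hyperbolic set of finite type of $\Lambda$ has Hausdorff dimension $\neq 1$. Hence $HD(\Lambda_a)\neq 1$. If $HD(\Lambda_a)>1$ then, because $\Lambda_a=\Lambda_{a+\delta}$ for small $\delta$, the function $t\mapsto HD(\Lambda_t)$ would already be $>1$ slightly below $a$ as well — more precisely $\Lambda_{a-\delta'}=\Lambda_a$ for small $\delta'>0$ by the same stability argument applied on the other side, contradicting $a=\sup\{t:HD(\Lambda_t)<1\}$. So $HD(\Lambda_a)<1$; but then $\Lambda_{a+\delta}=\Lambda_a$ has dimension $<1$ for all $\delta\in(0,\delta_0/2)$, which forces $a+\delta_0/2\le\sup\{t:HD(\Lambda_t)<1\}=a$, a contradiction. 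Therefore no such $\delta_0$ exists and $0$ is an accumulation point of $A$.

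The main obstacle is the stability statement $\Lambda_{a+\delta}=\Lambda_a$: one must check carefully that the hypothesis $f(\Lambda)\cap(a,a+\delta_0)=\emptyset$ together with compactness really does freeze the combinatorics of the truncated horseshoe across a whole interval of truncation parameters, and that the resulting frozen set is genuinely of finite type (so that Proposition \ref{2} applies). This is exactly the technical core already carried out in the preceding proposition, so the argument here is a variation on it; the rest is bookkeeping with the definition of $a$ and the dichotomy $HD(\Lambda_a)<1$ versus $>1$.
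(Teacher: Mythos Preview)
Your overall strategy matches the paper's: argue by contradiction, use the gap $(a,a+\delta_0)\cap f(\Lambda)=\emptyset$ to force $\Lambda_a=\Lambda_{a+\delta}$ for small $\delta>0$, observe this makes $\Lambda_a$ a hyperbolic set of finite type with $HD(\Lambda_a)\neq 1$, and then contradict the definition of $a$. Two remarks, one minor and one substantive.

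First, the minor one: to get $\Lambda_a=\Lambda_{a+\delta}$ you route through Markov partitions, but the paper does it in one line directly from orbits. If $x\in\Lambda_{a+\delta}$ and some iterate satisfied $a<f(\psi^n(x))\le a+\delta$, then $\psi^n(x)\in f^{-1}(a+\delta_1)\cap\Lambda$ for some $\delta_1\in(0,\delta]\subset(0,\epsilon)$, contradicting $\delta_1\notin A$. Hence every iterate has $f$-value $\le a$, i.e.\ $x\in\Lambda_a$. No combinatorics are needed here.

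The substantive issue is your handling of the case $HD(\Lambda_a)>1$. You write that ``$\Lambda_{a-\delta'}=\Lambda_a$ for small $\delta'>0$ by the same stability argument applied on the other side''. This step is not justified and is in fact generally false in the present situation. The stability argument you invoke relies on a gap in $f(\Lambda)$, and your contradiction hypothesis only produces a gap on the \emph{right} of $a$, namely $f(\Lambda)\cap(a,a+\delta_0)=\emptyset$. On the left there is no gap: the proposition immediately preceding this one establishes $f^{-1}(a)\cap\Lambda\neq\emptyset$, so $a\in f(\Lambda)$, and there is no reason whatsoever for $f(\Lambda)\cap(a-\delta',a)$ to be empty. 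Consequently you cannot conclude $\Lambda_{a-\delta'}=\Lambda_a$, and your dichotomy collapses at this branch. The paper, for its part, dispatches this step with the phrase ``by the same arguments above'' and asserts $HD(\Lambda_a)<1$ without spelling out the two cases; so the paper is terse here, but it does not make the explicit false claim that the left-hand stability holds. If you want to close this branch cleanly you need a different mechanism than a two-sided gap --- for instance, arguing that a finite-type set $\Lambda_a$ with $HD>1$ would contain a subhorseshoe $\Lambda'$ with $HD(\Lambda')>1$, and then shrinking $\Lambda'$ slightly (as in the construction used later for $b$) to push it strictly below the level $a$, contradicting $HD(\Lambda_t)<1$ for $t<a$.
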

\begin{proof}
	By contradiction, suppose that $0$ is isolated. Therefore, there is $\epsilon>0$ such that 
	$$f^{-1}(a+\delta)\cap \Lambda=\emptyset, \forall \delta\in (0,\epsilon).$$
	If $x\in \Lambda_{a+\delta}$ is such that there is $n\in \mathbb{Z}$ with $a<f(\psi^n(x))\le a+\delta$, hence we have $\psi^n(x)\in f^{-1}((a+\delta_1))\cap \Lambda$, for some $\delta_1<\delta$, contradiction, because $\delta_1\notin A$. By the same arguments above we have that $\Lambda_a$ is a subhorshoe with $HD(\Lambda_a)<1$ and  $\Lambda_a=\Lambda_{a+\delta}$ when $\delta < \epsilon$, contradiction with the definition of $a$.
\end{proof}


The following proposition is fundamental for our goal and it explains the reason of the name of $a(f,\psi)$.
\begin{proposition}\label{p1} 
	If $\psi\in \mathcal{R}$ then 
	$$a(f,\psi)=\inf\{t\in \mathbb{R}; HD(\Lambda_t)>1\}.$$
\end{proposition}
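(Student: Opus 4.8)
The plan is to put $a'(f,\psi):=\inf\{t\in\mathbb{R};\,HD(\Lambda_t)>1\}$ and to prove $a(f,\psi)=a'(f,\psi)$ by two inequalities. Since $\Lambda_s\subset\Lambda_t$ whenever $s\le t$ (because $f^{-1}((-\infty,s])\subset f^{-1}((-\infty,t])$), the function $t\mapsto HD(\Lambda_t)$ is non-decreasing; in particular, if $HD(\Lambda_s)<1$ and $HD(\Lambda_t)>1$ then necessarily $s<t$, so taking the supremum over the first kind of values and the infimum over the second gives $a\le a'$. Moreover $HD(\Lambda)>1$ together with $\Lambda_t=\Lambda$ for large $t$ forces $a'<+\infty$, while $\Lambda_t=\emptyset$ for very negative $t$ forces $a>-\infty$, so $a,a'\in\mathbb{R}$. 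It thus remains to prove $a'\le a$, i.e. to show that $HD(\Lambda_t)>1$ for \emph{every} $t>a$.

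So fix $t>a$ and choose an intermediate value $s$ with $a<s<t$. By the very definition of $a$ as a supremum, $HD(\Lambda_s)\ge 1$ (in particular $\Lambda_s\ne\emptyset$). Using Theorem \ref{p}, choose a Markov partition $\mathcal{P}=\{P_j\}_{j\in\mathcal{A}}$ of $\Lambda$ with diameter so small that the oscillation of $f$ on each piece $P_j$ is smaller than $t-s$ (possible by uniform continuity of $f$ on the compact set $\Lambda$). Let $\mathcal{A}^{\ast}=\{j\in\mathcal{A};\ f(y)\le t\ \text{for all }y\in P_j\cap\Lambda\}$ and let $\tilde\Lambda:=\Pi^{-1}(\Sigma(\mathcal{A}^{\ast}))\subset\Lambda$ be the hyperbolic set of finite type of Definition \ref{hsf} associated to the word set $X=\mathcal{A}^{\ast}$ (single letters, with the admissible transitions inherited from $\mathcal{B}$). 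I claim $\Lambda_s\subset\tilde\Lambda\subset\Lambda_t$. The inclusion $\tilde\Lambda\subset\Lambda_t$ is immediate: if $x\in\tilde\Lambda$ and $\varphi^n(x)\in P_{j_n}\cap\Lambda$ with $j_n\in\mathcal{A}^{\ast}$, then $f(\varphi^n(x))\le t$ for all $n\in\mathbb{Z}$, so $x\in\Lambda_t$. For $\Lambda_s\subset\tilde\Lambda$, take $x\in\Lambda_s$ and write $\varphi^n(x)\in P_{j_n}$; since $f(\varphi^n(x))\le s$ and $f$ oscillates by less than $t-s$ on $P_{j_n}\cap\Lambda$, every $y\in P_{j_n}\cap\Lambda$ satisfies $f(y)<s+(t-s)=t$, hence $j_n\in\mathcal{A}^{\ast}$ for all $n$, i.e. $x\in\tilde\Lambda$.

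With these two inclusions the conclusion is quick: $HD(\tilde\Lambda)\ge HD(\Lambda_s)\ge 1$. But $\psi\in\mathcal{R}$, and $\mathcal{R}$ is precisely the residual set furnished by Proposition \ref{2} with $b=1$, so \emph{no} hyperbolic set of finite type of $\Lambda_\psi$ has Hausdorff dimension equal to $1$. Therefore $HD(\tilde\Lambda)\ne 1$, hence $HD(\tilde\Lambda)>1$, and consequently $HD(\Lambda_t)\ge HD(\tilde\Lambda)>1$. As $t>a$ was arbitrary, $a'=\inf\{t;\,HD(\Lambda_t)>1\}\le a$, which together with $a\le a'$ yields $a(f,\psi)=a'(f,\psi)$, as claimed.

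The only genuinely delicate point is the inclusion $\Lambda_s\subset\tilde\Lambda$: one must be sure that the inner approximating hyperbolic set of finite type, built from a sufficiently fine Markov partition, really absorbs all of $\Lambda_s$, which is why the diameter of $\mathcal{P}$ is chosen only after the gap $t-s>0$ has been fixed, invoking uniform continuity of $f$. (The two preceding propositions — that $f^{-1}(a)\cap\Lambda\neq\emptyset$ and that $0$ accumulates on $A$ — are not logically required for this argument, but they confirm that $a$ is a genuine threshold value of $f$ on $\Lambda$ and not an artifact of $\Lambda_t$ being locally constant in $t$.) One should also double-check that $\tilde\Lambda$ qualifies as a hyperbolic set of finite type in the sense of Definition \ref{hsf}: it does, since $\mathcal{P}$ is a Markov partition of $\Lambda_\psi$ and we are simply taking the word set $X$ to consist of the single letters in $\mathcal{A}^{\ast}$.
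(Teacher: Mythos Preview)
Your proof is correct and rests on the same mechanism as the paper's: produce a hyperbolic set of finite type $\tilde\Lambda$ squeezed between two of the sets $\Lambda_{(\cdot)}$, use that the inner one has dimension $\ge 1$, and invoke $\psi\in\mathcal{R}$ to upgrade $\ge 1$ to $>1$. The execution, however, is a bit different. The paper sandwiches $\Lambda_a\subset\tilde\Lambda\subsetneq\Lambda_{a+\delta}$, and to justify that $HD(\tilde\Lambda)\ge 1$ it needs the auxiliary Proposition \ref{a1} (accumulation of $A$ at $0$) together with a second application of the construction at a level $a+\delta'<a+\delta$ so that $\Lambda_{a+\delta'}\subset\tilde\Lambda$. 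You instead sandwich $\Lambda_s\subset\tilde\Lambda\subset\Lambda_t$ for an arbitrary pair $a<s<t$, where $HD(\Lambda_s)\ge 1$ comes for free from the very definition of $a$ as a supremum; the fine Markov partition is chosen simply via uniform continuity of $f$ with gap $t-s$. This sidesteps both preceding propositions entirely (as you note parenthetically) and yields a shorter, self-contained argument. What the paper's route buys is the slightly sharper picture that the approximating $\tilde\Lambda$'s can be taken to contain $\Lambda_a$ itself, which is used verbatim in the proof of Theorem~A; your construction gives $\tilde\Lambda\supset\Lambda_s$ for $s$ arbitrarily close to $a$, which is just as good for that purpose.
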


\begin{proof}
	Take $\delta\in A$. 
	We have $d(f^{-1}(a),f^{-1}(a+\delta))>0$, because $f^{-1}(a)$ and $f^{-1}(a+\delta)$ are disjoint compacts sets. By the Theorem \ref{p} we can take a Markov partition $\mathcal{P}$ with sufficiently small diameter such that 
	\begin{equation}\label{eq3}
	f^{-1}((-\infty,a])\cap \Lambda\subset f^{-1}((-\infty,a])\cap (\cup_{P\in \mathcal{P}} P)\subsetneq f^{-1}((-\infty,a+\delta))\cap (\cup_{P\in \mathcal{P}} P).
	\end{equation}
	By shrinking the diameter of $\mathcal{P}$ if necessary,, we can take an open neighbourhood of elements of $\mathcal{P}$ which intersects $\Lambda_a$ but which has a positive distance from $f^{-1}(a+\delta)$. By (\ref{eq3}) we have that the maximal invariant set of this neighbourhood is a hyperbolic set of finite type, say $\tilde{\Lambda}$, such that
	\begin{eqnarray}\label{eq4}
	\Lambda_a\subset \tilde{\Lambda}\subsetneq \Lambda_{a+\delta}.
	\end{eqnarray}
	In the same way as \ref{eq4}, if $\delta'< \delta$ with $\delta'\in A$, we can find another hyperbolic set of finite type $\Lambda'$ such that
	$$\Lambda_a\subset \Lambda'\subsetneq \Lambda_{a+\delta'}\subset \tilde{\Lambda}\subsetneq \Lambda_{a+\delta}.$$
	Since $\psi\in \mathcal{R}$ we must have  $HD(\tilde{\Lambda})>1$, because $HD(\Lambda_a)\le HD(\tilde{\Lambda})$. Thus, for all $\delta>0$ we have that $HD(\Lambda_{a+\delta})>1$. This implies that  
	$$a=\inf\{t\in \mathbb{R}; HD(\Lambda_t)>1\}$$
\end{proof}


Now we are ready to prove

\begin{mydef}
	Given $\varphi\in$ $\Diff^2(M)$ with a horseshoe $\Lambda$ and $HD(\Lambda)>1$, there is an open set $\mathcal{U}\ni \varphi$ and a residual set   $\mathcal{R}\subset \mathcal{U}$ such that for every $\psi\in \mathcal{R}$ and $k\ge 1$ there exists a $C^k$-residual subset 
	$\mathcal{H}_{\psi}\subset C^k(M,\mathbb{R})$ such that for $f\in \mathcal{H}_{\psi}$, we have
	$$\inte(M_f(\Lambda_{\psi})\cap (-\infty,a+\delta))\neq \emptyset,$$
	for all $\delta>0$, where $a=a(f,\psi)$.
\end{mydef}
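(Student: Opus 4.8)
The plan is to combine the non-stability results of Section 3 (concretely Proposition \ref{2} / Corollary \ref{c2}), the characterization of $a=a(f,\psi)$ as $\inf\{t:HD(\Lambda_t)>1\}$ from Proposition \ref{p1}, and the adapted Moreira--Roma\~na machinery (Theorem \ref{MR-adapted} and Corollary \ref{c1}, together with items (1)--(4) preceding it). First I would fix the open set $\mathcal{U}\ni\varphi$ and the residual set $\mathcal{R}\subset\mathcal{U}$ supplied by Proposition \ref{2} with $b=1$, so that for $\psi\in\mathcal{R}$ every hyperbolic set of finite type $\tilde\Lambda_\psi$ of $\Lambda_\psi$ has $HD(\tilde\Lambda_\psi)\neq 1$; by intersecting with the residual set of Corollary \ref{c1} I may also assume that for $\psi$ in this (still residual) set there is a $C^k$-residual $H_\psi\subset C^k(M,\mathbb{R})$ with $\inte(M_f(\tilde\Lambda_\psi))\neq\emptyset$ for every hyperbolic set of finite type $\tilde\Lambda_\psi$ with $HD(\tilde\Lambda_\psi)>1$. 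The set $\mathcal{H}_\psi$ in the statement will be $H_\psi$ (or a residual refinement of it).

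Next, fix $\psi\in\mathcal{R}$ and $f\in\mathcal{H}_\psi$, and write $a=a(f,\psi)$. The key geometric step is: for each $\delta>0$ produce a \emph{hyperbolic set of finite type} $\tilde\Lambda\subset\Lambda_a$-neighborhood with $\Lambda_a\subset\tilde\Lambda\subsetneq\Lambda_{a+\delta}$ and $HD(\tilde\Lambda)>1$. This is exactly the construction already carried out inside the proof of Proposition \ref{p1}: using Theorem \ref{p} choose a Markov partition $\mathcal{P}$ of small diameter so that $f^{-1}((-\infty,a])\cap(\cup P)\subsetneq f^{-1}((-\infty,a+\delta'))\cap(\cup P)$ for some $0<\delta'<\delta$ with $\delta'\in A$ (Proposition \ref{a1} guarantees such $\delta'$ exist arbitrarily small), take the maximal invariant set of an open neighborhood of the elements of $\mathcal{P}$ that meet $\Lambda_a$ but stay at positive distance from $f^{-1}(a+\delta')$, and call it $\tilde\Lambda$. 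Then $\tilde\Lambda$ is a hyperbolic set of finite type with $\Lambda_a\subset\tilde\Lambda\subsetneq\Lambda_{a+\delta'}\subset\Lambda_{a+\delta}$, and since $\psi\in\mathcal{R}$ and $HD(\Lambda_a)$ cannot equal $1$ while $HD(\Lambda_{a+\delta'})>1$ (Proposition \ref{p1}), we must have $HD(\tilde\Lambda)>1$.

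Finally I would pass to the spectra. Since $\tilde\Lambda$ is a hyperbolic set of finite type with $HD(\tilde\Lambda)>1$ and $f\in\mathcal{H}_\psi\subset H_\psi$, Corollary \ref{c1} (equivalently item (4) with Theorem \ref{MR-adapted}) gives $\inte(M_f(\tilde\Lambda))\neq\emptyset$. By Remark \ref{r3.2}, $M_f(\tilde\Lambda)\subset M_f(\Lambda_{a+\delta})\subset M_f(\Lambda_\psi)\cap(-\infty,a+\delta]$ — here I use $\tilde\Lambda\subset\Lambda_{a+\delta}$ and the invariance of $\Lambda_{a+\delta}$. Actually one should be slightly careful: the interval produced lies in $(-\infty,a+\delta']$ with $\delta'<\delta$, hence in $(-\infty,a+\delta)$, which is precisely what is wanted. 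Therefore $\inte(M_f(\Lambda_\psi)\cap(-\infty,a+\delta))\supset\inte(M_f(\tilde\Lambda))\neq\emptyset$ for every $\delta>0$, completing the proof. The main obstacle is the geometric step of extracting a genuine hyperbolic set of finite type squeezed strictly between $\Lambda_a$ and $\Lambda_{a+\delta}$ and verifying it has dimension exceeding $1$; this is where Proposition \ref{a1} (so that $\Lambda_{a+\delta}$ strictly grows as $\delta\downarrow0$, preventing $\Lambda_a$ itself from being a finite-type set of dimension $>1$) and the genericity in $\mathcal{R}$ (ruling out dimension exactly $1$) are both essential, and one must check the Markov partition can be chosen fine enough that the neighborhood's maximal invariant set is of finite type while still containing $\Lambda_a$.
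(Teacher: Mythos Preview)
Your proposal is correct and follows essentially the same approach as the paper: intersect the residual set from Proposition~\ref{2} (with $b=1$) with the one from Corollary~\ref{c1}, invoke the proof of Proposition~\ref{p1} to produce a hyperbolic set of finite type $\tilde\Lambda$ with $\Lambda_a\subset\tilde\Lambda\subset\Lambda_{a+\delta}$ and $HD(\tilde\Lambda)>1$, apply Corollary~\ref{c1} to get $\inte(M_f(\tilde\Lambda))\neq\emptyset$, and conclude via Remark~\ref{r3.2}. Your extra care with $\delta'<\delta$ to land in the open interval $(-\infty,a+\delta)$ is a nice touch the paper glosses over; the only place to tighten is your parenthetical justification for $HD(\tilde\Lambda)>1$, which should read: the $\tilde\Lambda$ built at level $\delta$ in the proof of Proposition~\ref{p1} actually contains $\Lambda_{a+\delta'}$ for suitably small $\delta'\in A$ (by choosing the Markov partition fine enough), so $HD(\tilde\Lambda)\ge HD(\Lambda_{a+\delta'})\ge 1$ by the definition of $a$, and genericity then forces strict inequality.
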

\begin{proof}
	Let $\varphi$ be a $C^2$-diffeomorphism with a horseshoe $\Lambda$ with $HD(\Lambda)>1$. By the Corollary \ref{c1} there are an open set $\mathcal{U}\ni \varphi$ and a residual set $\mathcal{R}_1\subset \mathcal{U}$ with a nice properties for us; if $\psi\in \mathcal{R}_1$ there is a residual set $H_{\psi}$ such that
	\begin{eqnarray}\label{eq11}
	\inte{(M_f(\tilde{\Lambda}_{\psi}))}\neq \emptyset.
	\end{eqnarray}
	for every hiperbolic set of finite type $\tilde{\Lambda}_{\psi}$ of $\Lambda_{\psi}$, the hyperbolic continuation of $\Lambda$.
	Let $\mathcal{R}_2$ be the residual set of Proposition \ref{2}. By the proposition \ref{p1} if $\psi\in \mathcal{R}_2$ then
	\begin{eqnarray}\label{eq12}
	HD(\Lambda_{a+\delta})>1.
	\end{eqnarray}
	
	Consider $\mathcal{R}=\mathcal{R}_1\cap \mathcal{R}_2$. So $\mathcal{R}$ is a residual subset and every $\psi\in \mathcal{R}$ has the properties (\ref{eq11}) and (\ref{eq12}). By the remark \ref{r3.2} we have
	\begin{eqnarray}\label{eq13}
	M_f(\Lambda^{\psi}_{a+\delta})\subset M_f(\Lambda_{\psi})\cap (-\infty,a+\delta],
	\end{eqnarray}
	where 
	$$\Lambda^{\psi}_{t}=\bigcap_{n\in \mathbb{Z}}\psi^n(\Lambda_{\psi}\cap f^{-1}((-\infty,t])).$$
	Then, if $\psi\in \mathcal{R}$, the proof of proposition (\ref{p1}) gives us a hyperbolic set of finite type $\tilde{\Lambda}^{\psi}$ with
	$$\Lambda^{\psi}_a\subset \tilde{\Lambda}_{\psi}\subset \Lambda^{\psi}_{a+\delta}.$$
	But this implies that $$\emptyset\neq \inte{(M_f(\tilde{\Lambda}^{\psi}))}\subset \inte{(M_f(\Lambda^{\psi}_{a+\delta}))}.$$
	Hence, by \ref{eq13} we have that
	$$\inte{(M_f(\Lambda_{\psi})\cap (-\infty,a+\delta))}\neq \emptyset.$$
	and the theorem is proven.
\end{proof}	

\begin{mydef3} We have that
	$$\sup\{t\in \mathbb{R}; HD(\Lambda_t)<1\}=\inf\{t\in \mathbb{R};\inte(M_f(\Lambda)\cap (-\infty,t+\delta))\neq \emptyset, \delta>0\}.$$
\end{mydef3}

\begin{proof}
It is immediate from the above theorem.
\end{proof}

\section{Phase Transition Theorem for the Lagrange Spectrum}

The aim of this section is to prove the Theorem B. The proof follows essentially the same ideias of the Theorem A. Here we suppose that $\varphi$ is in the residual set $\mathcal{R}$ such that there are no subhorshoe of dimension $1$ and the corollary \ref{c} and $f\in H_{\varphi}$ as the corollary \ref{c}.

To prove the Phase Transition Theorem for the Lagrange Spectrum, we fix a Markov partition $\mathcal{P}.$ For each $n\in \mathbb{N}$, we consider $\tilde{\mathcal{P}_t}^n\subset \mathcal{P}^n$ the rectangles of Markov partition which cover $\Lambda_t$. For each $n$ let
$$\tilde{\Lambda}_n(t)=\bigcap_{k\in \mathbb{Z}}\varphi^k \left( \Lambda\cap \bigcup_{P\in \mathcal{P}^n_t}P\right),$$
be the hyperbolic set of finite type defined by $\tilde{\mathcal{P}}^n_t.$ We fix $t$ such that there are subhorshoes in $\tilde{\Lambda}_n(t)$. Let $\Lambda'_n(t)$ be the horseshoe contained in $\tilde{\Lambda}_n(t)$ with maximal Hausdorff dimension. Since $\Lambda'_{n+1}(t)\subset \Lambda'_n(t)$ we have that 
$$0< HD(\Lambda'_{n+1}(t))\leq HD(\Lambda'_{n}(t))<2.$$
In particular, there exists $D(t)=\lim_{n\to \infty} HD(\Lambda'_n(t)).$ We observe that $D(t)$ does not depends on the Markov partition $\mathcal{P}$. Let us consider
$$\tilde{a}(f,\varphi):=\sup\{t\in \mathbb{R};D(t)<1\},$$
\textbf{the Lagrange Phase Transition parameter}. It follows from this definition that $\tilde{a}(f,\varphi)\ge a(f,\varphi)$.

For every $\delta>0$ we have that there is a hyperbolic set of finite type, say $\tilde{\Lambda}$, such that $$\Lambda_{\tilde{a}}\subset\tilde{\Lambda}\subset\Lambda_{\tilde{a}+\delta}.$$
But, by definition there is a horseshoe $\Lambda'\subset \tilde{\Lambda}$ with $HD(\Lambda')>1$. In the same way of the proof of Theorem A we have

	\begin{eqnarray}\label{eq23}
L_f(\Lambda_{\tilde{a}+\delta})\subset L_f(\Lambda)\cap (-\infty,a+\delta],
\end{eqnarray}
and since by the corollary \ref{c} we have
	\begin{eqnarray}\label{25}
	\emptyset \neq L_f(\Lambda')
	\end{eqnarray}
we must be have that

$$\inte(L_f(\Lambda)\cap (-\infty,\tilde{a}+\delta))\neq \emptyset.$$
We need to show that $\L(L_f(\Lambda)\cap (-\infty,t))=0$ if $t<a$. For this purpose, we prove the following lemma

\begin{lemma}\label{lagrange1}
Let $\tilde{\Lambda}$ be a hyperbolic set of finite type, 
$$\tilde{\Lambda}=\Lambda_1\cup ... \Lambda_{r}\cup \Lambda_{r+1}\cup ... \Lambda_s$$
where for $1\le i\le r$ we have horseshoes or periodic orbits and for $r+1\le i\le s$ we have transient sets. If $\Lambda'=\bigcup_{i=1}^r \Lambda_i$ then
$$L_f(\tilde{\Lambda})=L_f(\Lambda').$$
\end{lemma}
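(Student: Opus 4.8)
The plan is to prove the two inclusions $L_f(\Lambda')\subset L_f(\tilde\Lambda)$ and $L_f(\tilde\Lambda)\subset L_f(\Lambda')$ separately. The first inclusion is immediate: since $\Lambda'\subset\tilde\Lambda$ and both are $\varphi$-invariant, for any $x\in\Lambda'$ the orbit stays in $\Lambda'\subset\tilde\Lambda$, so $\ell_{f,\varphi}(x)$ is computed the same way and lies in $L_f(\tilde\Lambda)$. Thus $L_f(\Lambda')\subset L_f(\tilde\Lambda)$ needs only one line.

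The content is in the reverse inclusion. First I would recall the structure from Definition \ref{ts} and Proposition \ref{prop2}: a point $x\in\tilde\Lambda$ is either nonwandering, in which case by Proposition \ref{prop2}(ii) it lies in some subhorseshoe $\Lambda_i$ with $1\le i\le r$ (a periodic orbit is the trivial case), or it is wandering and lies in a transient component $\Lambda_j$, $r+1\le j\le s$, for which there are subhorseshoes $\Lambda_1', \Lambda_2'$ among the $\Lambda_i$, $i\le r$ (the $\alpha$- and $\omega$-limit sets), with $\alpha(x)\subset\Lambda_1'$ and $\omega(x)\subset\Lambda_2'$. The key point is that $\ell_{f,\varphi}(x)=\limsup_{n\to+\infty}f(\varphi^n(x))$ depends only on the \textbf{forward} asymptotic behaviour of the orbit, i.e.\ only on $\omega(x)$. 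Precisely, I would prove the elementary fact that for any $x$ in a compact invariant set, $\ell_{f,\varphi}(x)=\max_{y\in\omega(x)}m_{f,\varphi}(y)$, or at least the weaker statement $\ell_{f,\varphi}(x)=\max\{f(y):y\in\omega(x)\}$ — this follows because the forward orbit accumulates exactly on $\omega(x)$, which is compact and $\varphi$-invariant, so $\limsup f(\varphi^n(x))=\max_{y\in\omega(x)}f(y)$, and this maximum is realized along the orbit of a point of $\omega(x)$, hence equals an achieved Lagrange value inside $\omega(x)$. Since $\omega(x)\subset\Lambda_i$ for some $i\le r$ (for nonwandering $x$, take $\Lambda_i\ni x$; for wandering $x$, $\omega(x)\subset\Lambda_2'$, a subhorseshoe), we get $\ell_{f,\varphi}(x)\in L_f(\Lambda_i)\subset L_f(\Lambda')$.

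The slightly delicate step is making the reduction "$\ell_{f,\varphi}(x)$ is an achieved Lagrange value of a point of $\omega(x)$" rigorous: one picks $y\in\omega(x)$ with $f(y)=\max_{\omega(x)}f=\ell_{f,\varphi}(x)$, and then needs a point $z\in\omega(x)$ whose forward orbit has $\limsup f$ equal to this same value; taking $z=y$ works since $f(\varphi^n(y))\le \max_{\omega(x)}f = f(y)$ for all $n$ (as $\omega(x)$ is forward invariant) and $f(\varphi^0(y))=f(y)$, so $\ell_{f,\varphi}(y)=f(y)=\ell_{f,\varphi}(x)$. This is the main (mild) obstacle — everything else is bookkeeping with the decomposition of $\tilde\Lambda$ and Proposition \ref{prop2}. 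I would then conclude: every $\ell\in L_f(\tilde\Lambda)$ equals $\ell_{f,\varphi}(y)$ for some $y$ lying in a subhorseshoe or periodic orbit $\Lambda_i$, $i\le r$, hence $\ell\in L_f(\Lambda')$, giving $L_f(\tilde\Lambda)\subset L_f(\Lambda')$ and completing the proof.
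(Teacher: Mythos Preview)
Your overall plan is right and matches the paper's: split into the nonwandering case (trivial) and the wandering case, and in the latter use that $\omega(x)$ sits in some subhorseshoe $\Lambda_{i_2}$. The paper's proof is just two lines: for wandering $x$ with $\omega(x)\subset\Lambda_{i_2}$ it asserts the existence of $z\in\Lambda_{i_2}$ with $\ell_{f,\varphi}(z)=\ell_{f,\varphi}(x)$.

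However, the specific justification you give for this existence is flawed. You pick $y\in\omega(x)$ maximizing $f$ and claim $\ell_{f,\varphi}(y)=f(y)$ because $f(\varphi^n(y))\le f(y)$ for all $n\ge 0$ and $f(\varphi^0(y))=f(y)$. But $\ell_{f,\varphi}(y)=\limsup_{n\to\infty}f(\varphi^n(y))$, and the value at $n=0$ is irrelevant to a $\limsup$. What you actually need is that $y$ be \emph{recurrent}, and the maximizer of $f$ on $\omega(x)$ need not be. For a concrete counterexample, take the full $2$-shift and $x$ whose forward itinerary is $1\,0\,1\,00\,1\,000\,1\,0000\,\ldots$ (gaps of increasing length). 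Then $\omega(x)$ is the orbit closure of $y=\ldots 000\,1\,000\ldots$, which consists of the $\sigma$-orbit of $y$ together with the fixed point $\bar 0$. If $f$ is larger at $y$ than anywhere else on $\omega(x)$, then $\ell_{f,\varphi}(x)=f(y)$, yet $\ell_{f,\varphi}(y)=\max_{\omega(y)}f=f(\bar 0)<f(y)$. In fact $L_f(\omega(x))=\{f(\bar 0)\}$, so \emph{no} $z\in\omega(x)$ has $\ell_{f,\varphi}(z)=\ell_{f,\varphi}(x)$.

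The fix is to look for $z$ in $\Lambda_{i_2}$ rather than in $\omega(x)$. Since $\omega(x)\subset\Lambda_{i_2}$ and $\Lambda_{i_2}$ is locally maximal hyperbolic, $x$ lies in the stable set $W^s(\Lambda_{i_2})=\bigcup_{z\in\Lambda_{i_2}}W^s(z)$, so there is $z\in\Lambda_{i_2}$ with $d(\varphi^n(x),\varphi^n(z))\to 0$; by continuity of $f$ this forces $\ell_{f,\varphi}(z)=\ell_{f,\varphi}(x)$. Equivalently, in symbolic coordinates the forward tail of $x$ eventually uses only symbols and transitions from the irreducible component of $\Lambda_{i_2}$, and one extends this tail backward inside $\Lambda_{i_2}$ to produce $z$. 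This is what the paper is (tersely) invoking when it writes ``there is $z\in\Lambda_{i_1}\cup\Lambda_{i_2}$ such that $\ell_{f,\varphi}(x)=\ell_{f,\varphi}(z)$''.
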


\begin{proof}
In fact, if $x$ is wandering then there is subhorshoes $\Lambda_{i_1}$ and $\Lambda_{i_2}$ such that $\alpha(x)\subset \Lambda_{i_1}$ and $\omega(x)\subset \Lambda_{i_2}$. In particular, there is $z\in \Lambda_{i_1}\cup \Lambda_{i_2}$ such that $\ell_{f,\varphi}(x)=\ell_{f,\varphi}(z)$. If $x$ is nonwandering there is not to do.

\end{proof}
It follows that
$$HD(L_f(\Lambda)\cap (-\infty, \tilde{a}-\delta)\leq D_n(a-\delta))<1.$$
Therefore,
$$\L(L_f(\Lambda)\cap (-\infty, \tilde{a}-\delta))=\emptyset$$
and we have the Theorem B.

From the above theorem we can ask when do we have $a=\tilde{a}$? To the answer this question we have the following theorem.

\begin{mydef5}\label{TheoremC}
	There is a $C^2$-diffeomorphism $\varphi$ on the sphere, such that there are a $C^2$-open $\mathcal{U}\ni \varphi$ and an open and dense set $\mathcal{U}^{\ast}\subset \mathcal{U}$ such that $a(f,\psi)<\tilde{a}(f,\psi)$ for every $\psi\in \mathcal{U}^{\ast}$ and for a $C^1$-open set of $f:\mathbb{S}^2\rightarrow \mathbb{R}$. In particular,
	$$\inte(M_f(\Lambda_{\psi})\setminus L_f(\Lambda_{\psi}))\neq \emptyset.$$
\end{mydef5}

\begin{proof}
	Consider $\varphi$ with a horseshoe $\Lambda$ with symbolic representation $\{1,2,3,4\}^{\mathbb{Z}}$ and $HD(\Lambda)>1$. We define $f$ using this symbolic representation by 
	$$f((a_n)_n)=\left\{ \begin{array}{rll}
	1, \ \hbox{if} \ a_{-1},a_0\in \{1,2\} \ \hbox{or} \ \hbox{if} & a_{-1},a_0\in \{3,4\}, \\
	1.3, \ \hbox{if} \ a_{-1}\in \{1,2\} \ \hbox{and} \ a_0\in \{3,4\}, \\
	1.6, \ \hbox{if} \ a_{-1}\in \{3,4\} \ \hbox{and} \ a_0\in \{1,2\}. \end{array}\right.$$
Note that we can extend $f$ to a $C^{\infty}$ function on $\mathbb{S}^2$. We take $\varphi:\mathbb{S}^2\rightarrow \mathbb{S}^2$ such that $HD(K^s(\Lambda \{1,2\}))=0.4$, $HD(K^u(\Lambda \{1,2\}))=0.55$, and $HD(K^s(\Lambda \{1,2\}))=0.39$, $HD(K^u(\Lambda \{1,2\}))=0.58$, where $\Lambda\{1,2\}$ is the subhorseshoe with symbolic dynamics given by $\{1,2\}^{\mathbb{Z}}$ and $\Lambda\{3,4\}$ is the subhorseshoe with symbolic dynamics given by $\{3,4\}^{\mathbb{Z}}$. By Lemma \ref{lagrange1}
	$$L_f(\Lambda_{1.3})\subset L_f(\Lambda \{1,2\}\cup \Lambda \{3,4\}).$$ 
Moreover, $L_f(\Lambda \{1,2\}\cup \Lambda \{3,4\})\subset f(\Lambda \{1,2\}\cup \Lambda \{3,4\})$. Since
$$\max\{HD(\Lambda \{1,2\}),HD(\Lambda \{3,4\})\}<1$$ we have that 
	\begin{equation}\label{example}
	\L(L_f(\Lambda)\cap (-\infty,1.3])=0.
	\end{equation}
On the other hand $HD(K^s(\Lambda\{3,4\}))+HD(K^u(\Lambda\{1,2\}))>1$. By Theorem \ref{MR-adapted} there is an open and dense subset of diffeomorphisms such that if $\psi\subset \mathcal{U}^{\ast}$ then 
	$$\inte(M_f(\Lambda_{\psi})\cap (-\infty,1.3)))\neq \emptyset.$$
Moreover, taking $\mathcal{U}^{\ast}$ sufficiently is small, we have that (\ref{example}) is valid for $\psi$, because in this case $\max\{HD(\Lambda_{\psi} \{1,2\}),HD(\Lambda_{\psi} \{3,4\})\}<1$. Note that this is an open property on $f$. In particular, we have that 
	$$a(f,\psi)<1.3<\tilde{a}(f,\psi).$$
	
	\begin{center}
	\begin{figure}
	\begin{tikzpicture}
	\draw (-3,-3) rectangle (3,3);
	\draw (0,0) node{$\Lambda$};
	\draw (-3,-3) rectangle (-1,-1);
	\draw (-2,-2) node{$\Lambda\{1,2\}$};
	\draw (-3,3) rectangle (-1,1);
	\draw (1,1) rectangle (3,3);
	\draw (2,2) node{$\Lambda\{3,4\}$};
        \draw (1,-3) rectangle (3,-1);
         \draw (-2.5,-1.5)--(-2.5,2.5);
        \draw (-2.3,-1.5)--(-2.3,2.5);
        \draw (-2.1,-1.5)--(-2.1,2.5);
         \draw (-2.7,2.4)--(1.3,2.4);
         \draw (-2.7,2.2)--(1.3,2.2);
          \draw (-2.7,2)--(1.3,2);
	\end{tikzpicture}
	\caption{Heteroclinic intersections \label{ferradura}}
	\end{figure}
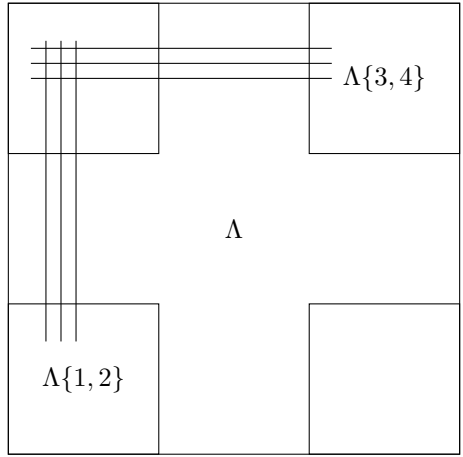
	\end{center}
\end{proof}
\begin{remark}
	We note that the main difference between the Theorems A and B is such that the dimension of stable and unstable Cantor sets can be different. This implies that hyperbolic set of finite type contained at $\Lambda_{a+\delta}$ could have dimension greater than $1$ but every subhorseshoe contained at $\Lambda_{a+\delta}$ has dimension smaller than $1$. This implies that the Markov spectrum can have interior before the Lagrange spectrum. This phenomenon does not occur in the conservative case, where $HD(K^s)=HD(K^u)$.	
\end{remark}

Let $\varphi$ be a conservative diffeomorphism and consider the set 
	$$\mathcal{R}_{\Lambda}'=\{f\in C^k(M,\mathbb{R}); \partial_{s,u}f(x^{s,u})\neq 0\},$$
$k>1$. If $HD(\Lambda')<1$ then $\mathcal{R}_{\Lambda}'$ is an open and dense subset of $C^k(M,\mathbb{R})$, $k>1$. In particular, the set 
	$$\mathcal{R}_{\varphi}=\bigcap_{\Lambda'; HD(\Lambda')<1}\mathcal{R}_{\Lambda}'$$
is a residual subset of $C^k(M,\mathbb{R})$. Using Theorem 1.2 in \cite{CMM16}  there is a residual set $\mathcal{U}^{\ast \ast}$ of diffeomorphisms $\varphi$ such that if $f\in \mathcal{R}_{\varphi}$ then 
	$$HD(L\cap (-\infty, a-\delta))=HD(M\cap (-\infty,a-\delta)).$$
\begin{lemma}
Consider $\varphi\in \mathcal{U}^{\ast \ast}$. If $f\in \mathcal{R}_{\varphi}$ then 
	$$HD(\Lambda \cap f^{-1}(a(f,\varphi)))<1.$$
\end{lemma}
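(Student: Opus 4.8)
The claim is that when $\varphi$ is a conservative diffeomorphism in the residual set $\mathcal{U}^{\ast\ast}$ and $f$ lies in the residual set $\mathcal{R}_\varphi$ (so that $\partial_{s,u}f(x^{s,u})\neq 0$ at the relevant extremal points, and $HD$ of Markov and Lagrange spectra coincide below $a$), the slice $\Lambda\cap f^{-1}(a(f,\varphi))$ of the horseshoe along the critical level set has Hausdorff dimension strictly less than one. The underlying mechanism is that, in the conservative case, the transition value $a=a(f,\varphi)$ is characterized by $HD(\Lambda_t)<1$ for all $t<a$, together with the fact (Proposition \ref{p1} and the conservative version Proposition \ref{cons1}) that no hyperbolic set of finite type contained in $\Lambda$ has Hausdorff dimension exactly $1$. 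The plan is to show that $\Lambda\cap f^{-1}(a)$ is contained in a set whose dimension is controlled by $HD(\Lambda_a)=\lim_{t\to a^-}HD(\Lambda_t)$, which is $\le 1$; and then to upgrade the non-strict inequality to a strict one using genericity.

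\emph{First step.} I would first show $HD(\Lambda_a)\le 1$. Indeed $\Lambda_a=\bigcap_{t>a}\Lambda_t$ as a decreasing intersection of compact sets, but more to the point, for $t<a$ we have $HD(\Lambda_t)<1$ by definition of $a$, and $HD(\Lambda_a)\le\liminf_{t\to a^-}?$ — this is where one must be careful, since Hausdorff dimension is not lower semicontinuous under decreasing intersections in general. The correct route is to use the finite-type approximation from Section 6: for each $n$ the set $\tilde\Lambda_n(a)$ is a hyperbolic set of finite type, $\Lambda_a\subset\tilde\Lambda_n(a)$, and $HD(\tilde\Lambda_n(a))\to HD(\Lambda_a)$ as $n\to\infty$ (decreasing limit of dimensions of nested finite-type sets, which behaves well because each $\tilde\Lambda_n(a)$ is itself a finite union of subhorseshoes and transient sets, whose dimensions are given by the Cantor-set formulas \eqref{eq.1} and \eqref{eq.transient}). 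Since $\varphi\in\mathcal{U}^{\ast\ast}$, Proposition \ref{cons1} applied with $b=1$ forces $HD(\tilde\Lambda_n(a))\neq 1$ for every $n$; and since $HD(\tilde\Lambda_n(a))\ge HD(\Lambda_t)$ is compatible with $t<a$ only if these dimensions approach something $\le 1$, we get $HD(\Lambda_a)<1$: the strict inequality already appears here, because $\Lambda_a$ is itself (or is contained in) a hyperbolic set of finite type to which Proposition \ref{cons1} applies, and the only way $HD(\Lambda_a)\le 1$ with $HD(\Lambda_a)\neq 1$ is $HD(\Lambda_a)<1$.

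\emph{Second step.} Now relate $\Lambda\cap f^{-1}(a)$ to $\Lambda_a$. A point $x$ with $f(x)=a$ need not have its whole orbit in $f^{-1}((-\infty,a])$, so $\Lambda\cap f^{-1}(a)\not\subset\Lambda_a$ in general. However, $\Lambda\cap f^{-1}(a)\subset\Lambda\cap f^{-1}((-\infty,a])$, and I would decompose the latter: it is the union of $\Lambda_a$ (orbits staying forever below $a$) together with points that eventually escape above $a$. The escaping part is an increasing union, over $N$, of pieces on which $f(\varphi^j(x))\le a$ for $|j|\le N$ but $f(\varphi^{N+1}(x))>a$ or $f(\varphi^{-N-1}(x))>a$; each such piece lies in a finite union of rectangles of a fine Markov partition (contained in $f^{-1}((-\infty,a])$), hence in a hyperbolic set of finite type $\tilde\Lambda_N(a)$, which has dimension $<1$ by the argument above. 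Since the local-product-structure slices and the intersection with the transversal level set $f^{-1}(a)$ (which, by $\partial_{s,u}f(x^{s,u})\neq 0$, meets $\Lambda$ transversally in the relevant directions, so does not increase dimension) only cut dimension, $HD(\Lambda\cap f^{-1}(a))\le\sup_N HD(\tilde\Lambda_N(a))<1$.

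\emph{Main obstacle.} The delicate point is the second step: controlling the full preimage $\Lambda\cap f^{-1}((-\infty,a])$ rather than just $\Lambda_a$, and in particular arguing that adding the level constraint $f(x)=a$ does not somehow produce a subset of dimension exactly $1$ that was not visible in any finite-type approximation. This is exactly where the hypothesis $f\in\mathcal{R}_\varphi$ (the condition $\partial_{s,u}f(x^{s,u})\neq 0$ at extremal points, together with the fact that the extremal point realizing $f=a$ on any relevant subhorseshoe is generically a single point in $\partial_s\Lambda\cap\partial_u\Lambda$, by item (1) before Theorem \ref{MR-adapted}) is used: it guarantees the level set $f^{-1}(a)$ is a $C^1$ curve transverse to the stable and unstable foliations near the finitely many candidate points, so its intersection with $\Lambda$ is a subset of a countable union of slices, each of dimension at most $\max(HD(K^s),HD(K^u))<HD(\Lambda_a)+\varepsilon<1$. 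One must also invoke the conservative equality $HD(K^s)=HD(K^u)$ (remarked after Theorem C) so that $HD(\Lambda_a)<1$ actually forces each factor to be $<1/2$ and the slice dimension to be $<1/2<1$, making the transversality argument comfortable. Assembling these bounds gives $HD(\Lambda\cap f^{-1}(a(f,\varphi)))<1$, as claimed.
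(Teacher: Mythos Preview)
Your Step 2 contains a genuine gap. You decompose $\Lambda\cap f^{-1}((-\infty,a])$ into $\Lambda_a$ together with escaping pieces, and then assert that each escaping piece ``lies in a finite union of rectangles \dots hence in a hyperbolic set of finite type $\tilde\Lambda_N(a)$, which has dimension $<1$''. This is false: a point whose orbit \emph{escapes} those rectangles is, by definition, not in their maximal invariant set $\tilde\Lambda_N(a)$; it merely lies in $\Lambda$ intersected with the rectangles, and that intersection has dimension $HD(\Lambda)>1$. More broadly, $\Lambda\cap f^{-1}((-\infty,a])$ carries no dynamical constraint --- it is just the portion of the full horseshoe where $f\le a$ --- and its dimension is typically $HD(\Lambda)$. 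The value $a$ controls $\Lambda_a$ (orbits confined below $a$), not this set, so the detour through $\Lambda_a$ and finite-type approximations (your Step 1, which is correct but irrelevant) buys nothing here.

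The paper's proof is exactly the idea you reach in your final paragraph, and it is the \emph{entire} argument, not a patch: generically $a$ is a regular value of $f$, so $f^{-1}(a)$ is a smooth curve; cut it into finitely many arcs that are graphs over the stable or the unstable direction; each such arc meets $\Lambda$ (locally $K^s\times K^u$) in a set that projects bi-Lipschitz into one factor, hence has dimension at most $\max\bigl(HD(K^s),HD(K^u)\bigr)$; in the conservative case $HD(K^s)=HD(K^u)=\tfrac12 HD(\Lambda)<1$. The paper summarizes this as ``proceed as in Lemma~17 of \cite{MR2}''. Note this is purely geometric and works for \emph{any} regular value of $f$ --- the definition of $a$ plays no role, and your invocation of extremal points in $\partial_s\Lambda\cap\partial_u\Lambda$ is a red herring for this lemma. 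Drop Steps 1--2 and make the transversality argument the proof.
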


\begin{proof}
We can suppose without loss of generality that $a(f,\varphi)$ is a regular value of $f$ and then dividing the curve in finitely many pieces we can suppose that $f^{-1}(a)$ is formed by curves which are graphs of a $C^k$ function from $W^s(\Lambda)$ to $W^u(\Lambda)$ or from $W^u(\Lambda)$ to $W^u(\Lambda)$ to $W^s(\Lambda)$. In this case, we proceed as the Lemma 17 in \cite{MR2}.

\end{proof}

In particular, if $K^s$ denotes the stable Cantor set associated to $\Lambda$ then 
$$HD(K^s\cap f^{-1}(a))<1/2.$$
 By the Proposition \ref{cons1}, we can apply the above arguments and we have the following theorem. 

\begin{mydef2}
	Given $\varphi\in \Diff^2_{\ast}(M)$ with a horseshoe $\Lambda$, $HD(\Lambda)>1$ and $k\ge 2$, there are an open set $\Diff^2_{\ast}(M)\supset \mathcal{U}\ni \varphi$ and a residual set  $\mathcal{U}^{\ast \ast}\subset \mathcal{U}$ such that for every $\psi\in \mathcal{U}^{\ast \ast}$ there is a $C^k$-residual subset $\mathcal{R}_{\psi}\subset C^k(M,\mathbb{R})$ such that if $f\in \mathcal{R}_{\psi}$ then
	$$\L(M_f(\Lambda_{\psi})\cap (-\infty,a-\delta))= 0 = \L(L_f(\Lambda_{\psi})\cap (-\infty,a-\delta))$$
	and
	$$\inte(L_f(\Lambda_{\psi})\cap (-\infty,a+\delta))\neq \emptyset \neq \inte(M_f(\Lambda_{\psi})\cap (-\infty,a+\delta)).$$
	for all $\delta>0$, where $a=a(f,\psi)$ defined as above.
	Moreover, 
	$$HD(M_f(\Lambda_{\psi})\cap (-\infty,a)))=HD(L_f(\Lambda_{\psi})\cap (-\infty,a))=1.$$
\end{mydef2} 

\begin{proof} By simplicity, we just write $\Lambda$ for $\Lambda_{\psi}$.
Since $$HD(M_f(\Lambda)\cap (-\infty,a-\delta))=HD(L_f(\Lambda)\cap (-\infty,a-\delta))\le HD(f(\Lambda_{a-\delta}))\le HD(\Lambda_{a-\delta})<1$$ we have that
$$\L(M_f(\Lambda_{\psi})\cap (-\infty,a-\delta))= 0 = \L(L_f(\Lambda_{\psi})\cap (-\infty,a-\delta)).$$
We need to proof only that 
	$$HD(M_f(\Lambda_{\psi})\cap (-\infty,a)))=HD(L_f(\Lambda_{\psi})\cap (-\infty,a))=1.$$ Suppose that $\lim_{\delta \to 0}HD(\Lambda_{a-\delta})=d_1<1$. In particular $HD(K^s_{a-\delta})<1/2$ for all $\delta>0$.

 Given $\epsilon>0$, let $\tilde{d}=HD(K^s\cap f^{-1}(a))$. 
Following \cite{CMM16} if we have a Markov partition $\mathcal{P}$, consider 
$$K^s_{t}=\bigcup_a \pi^s(\Lambda_t\cap P_a).$$

We can take a cover of $K^s\cap f^{-1}(a)$ by $\epsilon$-intervals $\{I^s_{\underline{a}^i}\}_{i=1}^M$	 such that $M=\Oh(\epsilon^{-\tilde{d}})$. If $\delta>0$ is sufficiently small we have that 
$$K^s_{a+\delta}\subset K^s_{a-\delta}\cup (\cup_{i=1}^M I^s_{\underline{a}^i} ).$$
There is $m>1$ such that we can find a cover of $K^s_{a-\delta}$ by $\epsilon^m$-intervals $\{J_{\underline{b}^k}\}_{k=1}^N$, where $N=\Oh(\epsilon^{-md})$, where $d=\max\{\tilde{d},HD(K^s_{a-\delta})\}<1/2$. In particular, $\cup_{k=1}^N J_{\underline{b}^k}\cup \cup_{i=1}^M I_{\underline{a}^i}$ is an $\epsilon$-cover of $K_{a+\delta}$. Note that,
$$\sum_{k=1}^M |I^s_{\underline{b}^k}|^d+\sum_{i=1}^N |I^s_{\underline{a}^i}|^d=\Oh(1),$$
then $HD(K^s_{a+\delta})\le d<1/2$. Since $HD(\Lambda_{a+\delta})=2\cdot HD(K^s_{a+\delta})<1$ we have a contradiction with the definition of $a$. Then 
$$\lim_{\delta \to 0} HD(L_f(\Lambda)\cap (-\infty,a-\delta))=1=\lim_{\delta \to 0} HD(M_f(\Lambda)\cap (-\infty,a-\delta)).$$

\end{proof}

 Then, the proof of theorem D, gives us that, in this context, $L_f(\Lambda)\cap (a-\delta,a)$ is a countable union of Cantor sets.
The above Theorem inspires us to compare the Dynamical with the Classical Spectra.

Let us consider $A_N=[0;\overline{N,1}]$ and $B_N=[0;\overline{1,N}]$. We have that 
$$NA_N+a_NB_N=1 \ \mbox{and} \ B_N+B_NA_N=1,$$ 
so
$A_N=\dfrac{B_N}{N}.$

Thus, we have

$$B_N=\frac{-N+\sqrt{N^2+4N}}{2} \ \mbox{and} \ A_N=\frac{-N+\sqrt{N^2+4N}}{2N}.$$ 
We denote $C(N)=\{x=[0;a_1,a_2,...]; a_i\le N, \forall i\ge 1\}$. Let $g$ be the Gauss map, $g(x)=\{1/x\}$, where $\{z\}=z-\lfloor z \rfloor$. We write $\tilde{C}(N)=\{1,2,...,N\}+C(N)$, $\Lambda_N=C(N)\times \tilde{C}(N)$ and $f(x,y)=x+y$. If $x=[0;a_1(x),a_2(x),...]$ then take $\varphi:\Lambda_N \rightarrow \Lambda_N$ given by
$$\varphi(x,y)=(g(x),a_1(x)+1/y).$$

We note that $\varphi$ can be extended to a $C^2$-conservative diffeomorphism. If fact, $\tilde{\varphi}(x,y)=\left(g(x),\dfrac{1}{a_1(x)+y}\right)$ defined on $C(N)\times C(N)$ has an invariante measure, see \cite{S.ITO}, and $h(x,y)=(x,1/y)$ conjugates $\varphi$ with $\tilde{\varphi}$. Moreover, $\max f(\Lambda_N)=2B_N+N=\sqrt{N^2+4N}.$ In particular, $\max f(\Lambda_4)=\sqrt{32}$.
It is so easy to show that

\begin{proposition}
	If $L$ is the classical Lagrange spectrum, then 
	$$L\cap (-\infty,1+\sqrt{21}]=L_f(\Lambda_4)\cap (-\infty,1+\sqrt{21}].$$
	Moreover, if $M$ is the classical Markov spectrum, then
		$$M\cap (-\infty,1+\sqrt{21}]=M_f(\Lambda_4)\cap (-\infty,1+\sqrt{21}].$$

\end{proposition}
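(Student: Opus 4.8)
The plan is to reduce both claimed equalities to the classical characterization of $L$ and $M$ via continued fractions, recalled in the introduction, and to the specific structure of $\Lambda_4$, $\tilde C(4)$ and $f(x,y)=x+y$. Recall that if $\alpha=[a_0;a_1,a_2,\dots]$, then $k(\alpha)=\limsup_{n\to\infty}(\alpha_n+\beta_n)$ where $\alpha_n=[a_n;a_{n+1},\dots]$ and $\beta_n=[0;a_{n-1},\dots,a_1]$, and the Markov value is $\sup_{n}(\alpha_n+\beta_n)$; both interpretations are exactly the values $f(\sigma^n(\underline\theta))$ of the function $f$ on the bi-infinite shift. First I would observe that $L\cap(-\infty,3)$ consists of Markov values $k_n$, all $<3<1+\sqrt{21}$, attained by sequences with digits in $\{1,2\}$, hence realized inside $\Lambda_4$; and more generally, a Lagrange (resp. Markov) value $\le 1+\sqrt{21}$ forces, for all large $n$ (resp. all $n$), $\alpha_n+\beta_n\le 1+\sqrt{21}$, so $\alpha_n=[a_n;a_{n+1},\dots]\le 1+\sqrt{21}$. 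Since $[5;\dots]\ge 5>1+\sqrt{21}\approx 5.58$ is false — wait, $1+\sqrt{21}\approx 5.583$, so one must argue more carefully: the point is that if some digit $a_{n}\ge 5$ appeared infinitely often (resp. at all) one gets a lower bound on $\alpha_n+\beta_n$ exceeding $1+\sqrt{21}$ unless the neighbouring digits are forced to be $1$, and a short case analysis (this is where I would invoke the standard estimates used by Markov and in \cite{CF-89}) shows that the relevant part of the spectrum below $1+\sqrt{21}$ is produced exactly by sequences all of whose digits lie in $\{1,2,3,4\}$, i.e. by points of the horseshoe $\Lambda_4$.

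The key structural remark is the identification of the dynamics: with $\varphi(x,y)=(g(x),a_1(x)+1/y)$ on $\Lambda_4=C(4)\times\tilde C(4)$, the first coordinate $x=[0;a_1,a_2,\dots]$ has $g^n(x)=[0;a_{n+1},a_{n+2},\dots]$, and iterating the second coordinate produces $[0;a_n,a_{n-1},\dots,a_1,\ast]$; one checks that the $\ast$-tail is irrelevant in the limit, so that $f(\varphi^n(x,y))=g^n(x)+(\text{second coord})$ converges to $\alpha_{n+1}+\beta_{n+1}$ up to the usual continued-fraction error terms. Hence $\limsup_n f(\varphi^n(x,y))$ equals the Lagrange value of the number whose continued fraction expansion is built from the digits of $x$, and $\sup_n f(\varphi^n(x,y))$ equals the corresponding Markov value (here one must take $y$ ranging over $\tilde C(4)=\{1,2,3,4\}+C(4)$ precisely so that the full bi-infinite orbit, not just the forward orbit, is represented; this is why $\tilde C(4)$ rather than $C(4)$ appears). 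Thus $L_f(\Lambda_4)$ (resp. $M_f(\Lambda_4)$) is exactly the set of classical Lagrange (resp. Markov) values of numbers with partial quotients in $\{1,2,3,4\}$.

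Combining the two observations: on the one hand $L_f(\Lambda_4)\subset L$ and $M_f(\Lambda_4)\subset M$ always, by the inclusion just described; on the other hand, the first step shows that every classical value in $(-\infty,1+\sqrt{21}]$ is the value of a number with all partial quotients $\le 4$, hence lies in $L_f(\Lambda_4)$ (resp. $M_f(\Lambda_4)$). Intersecting with $(-\infty,1+\sqrt{21}]$ gives both displayed equalities. The main obstacle I expect is the truncation step — verifying cleanly that a Lagrange or Markov value not exceeding $1+\sqrt{21}$ genuinely forbids any partial quotient $\ge 5$ in the relevant (eventual, resp. global) tail; this requires the sharp inequality $1+\sqrt{21}<$ (the smallest value $\alpha_n+\beta_n$ can take when $a_n\ge 5$ after optimizing the surrounding digits), which is the kind of elementary but delicate continued-fraction estimate carried out in \cite{CF-89} and by Bumby \cite{B}; the dynamical identification itself is routine once the role of $\tilde C(4)$ is pinned down.
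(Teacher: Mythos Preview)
Your overall plan---identify $L_f(\Lambda_4)$ and $M_f(\Lambda_4)$ with the classical Lagrange and Markov values of bi-infinite sequences over $\{1,2,3,4\}$, then show every classical value $\le 1+\sqrt{21}$ is realized there---is exactly the paper's, and the dynamical identification is handled correctly.

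The gap is in your ``truncation step.'' You expect that a value $\le 1+\sqrt{21}$ forces the realizing sequence itself to have partial quotients $\le 4$ (eventually, resp.\ globally), via an inequality of the form ``$1+\sqrt{21}$ is smaller than the least $\alpha_n+\beta_n$ possible when $a_n\ge 5$.'' That inequality is false. Take $\underline\theta=\ldots 4\,4\,5\,4\,4\ldots$ (a single $5$ in a sea of $4$'s): at the position of the $5$ one gets $[5;\overline{4}]+[0;\overline{4}]=1+2\sqrt5\approx 5.472$, while at every other position the value stays below $5$, so the Markov value of $\underline\theta$ is $1+2\sqrt5<1+\sqrt{21}$. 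Inserting such isolated $5$'s along a sparse sequence of positions gives a Lagrange example with infinitely many digits $\ge 5$ and $\limsup$ equal to $1+2\sqrt5$. So sequences containing $5$'s can have values below $1+\sqrt{21}$; what must be shown is not that the \emph{given} sequence has small digits, but that the \emph{value} is also hit by some other sequence over $\{1,2,3,4\}$.

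The paper does this in two stages. For $\ell\le (20+\sqrt{45})/5\approx 5.342$ your truncation does work: infinitely many digits $\ge 5$ force $\ell\ge 5+2A_5$, so one may replace the finite bad prefix. For $\ell\in(5.34,\,1+\sqrt{21}]$ the paper instead invokes Theorem~5 of Chapter~4 in Cusick--Flahive to produce a specific $\underline\theta$ over $\{1,2,3,4\}$ with $a_0=4$, $f(\underline\theta)=\ell$ and $f(\sigma^k\underline\theta)\le\ell$ for all $k$; this already yields the Markov statement. For Lagrange one then concatenates the growing symmetric blocks $\underline\theta^m=(a_{-m},\ldots,a_m)$ into a single word $\underline\alpha\in\{1,2,3,4\}^{\mathbb Z}$ and checks that along a suitable subsequence of positions $f(\sigma^{x_n}\underline\alpha)\to\ell$. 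This construction---passing from a Markov realization to a Lagrange realization inside $\{1,2,3,4\}^{\mathbb Z}$---is the ingredient your proposal is missing.
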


\begin{proof}
	Given any sequence $\underline{\theta}=(a_n)_{n\in \mathbb{Z}}\in \mathbb{N}^{\mathbb{Z}}$ let us consider the set $N_j(\underline{\theta})=\{n\in \mathbb{N}; a_n\ge j\}$. It is easy to see that if $N_j(\underline{\theta})$ is infinite then
	
	$$\ell(\underline{\theta})\ge j+2A_j$$
	Therefore, if we have a sequence $\underline{\theta}$ such that $N_5(\theta)$ is infinite we have that $\ell(\theta)\ge \frac{20+\sqrt{45}}{5}=5.3416407865....$
	This implies that 
	$$L_f(\Lambda_4)\cap (-\infty, 5.34]=L\cap (-\infty, 5.34].$$

	If we take $\ell\in L\cap (-\infty, 1+\sqrt{21}]$ such that $\ell>5.34$, by the theorem 5, chapter 4 of [3], we have that $l=4+[0;a_1,a_2,...]+[0;a_{-1},a_{-2},...]$, where $(a_i,a_{i+1})\notin \{(1,4),(2,4)\}$, $i\in \mathbb{N}$ and $(a_{-i},a_{-(i+1)})\notin \{(1,4),(2,4)\}$, $i\in \mathbb{N}$. Moreover, $f(\sigma^k(\underline{\theta}))\le f(\underline{\theta})$, 
	where we define $\underline{\theta}=(a_n)_{n\in \mathbb{Z}}$, with $a_0=4$. Then we have $f(\underline{\theta})=\ell$. Let us consider $$\underline{\theta}^m=(a_{-m},a_{-m+1},...,a_{-1},a_0,a_1,...,a_{m-1},a_m), m\in \mathbb{N}.$$ So, if $\underline{\alpha}=(\underline{\theta}^{|m|})_{m\in \mathbb{Z}}$ we can find a sequence $x_n\in \mathbb{N}$ such that 
	$$\lim_{n\to +\infty}f(\sigma^{x_n}(\underline{\alpha}))=f(\underline{\theta})=\ell,$$
	$x_n=n(n+1)$ for instance.
	
	So, given $\ell\in L\cap (-\infty, 1+\sqrt{21}]$ we can find $\underline{\alpha}\in \{1,2,3,4\}^{\mathbb{Z}}$ such that $\ell(\underline{\alpha})=\ell$, in particular, $L_f(\Lambda_4)\cap (-\infty,1+\sqrt{21}]=L\cap (-\infty, 1+\sqrt{21}]$. The proof that
	$$M\cap (-\infty,1+\sqrt{21}]=M_f(\Lambda_4)\cap (-\infty,1+\sqrt{21}]$$
	is analogous.

\end{proof}

A little bit more is true, in fact $L\cap (-\infty, \sqrt{32}]=L_f(\Lambda_4)$, but it is not so easy to show.

From this, we hope that the \textbf{question 1}  by Moreira is true:

\begin{conj} Let $L$ be the classical Lagrange spectrum and $M$ be the classical Markov spectrum. Consider $$d(t)=HD(L\cap (-\infty,t))=HD(M\cap (-\infty,t)) \ \mbox{and} \ a=\inf\{t\in \mathbb{R}; d(t)=1\}.$$ Then
	$$\L(L\cap (-\infty,a-\delta))=0=\L(M\cap (-\infty,a+\delta))$$
	but
	$$\inte(L\cap (-\infty,a+\delta))\neq \emptyset \ \mbox{and} \ \inte(M\cap (-\infty,t+\delta))\neq \emptyset$$
	for all $\delta>0$.
\end{conj}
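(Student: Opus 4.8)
The plan is to prove the Theorem D analogue of the final \emph{Conjecture} by transferring our dynamical results to the classical setting via the horseshoe family $\Lambda_N = C(N)\times \tilde C(N)$ with $f(x,y)=x+y$, already introduced in the excerpt, together with the conservative diffeomorphism $\varphi$ and the identification $L\cap(-\infty,1+\sqrt{21}]=L_f(\Lambda_4)\cap(-\infty,1+\sqrt{21}]$ (and the corresponding identity for $M$). First I would fix $N$ large enough that $HD(\Lambda_N)>1$ and that the classical transition parameter $a=\inf\{t:d(t)=1\}$ satisfies $a<\max f(\Lambda_N)$; since $a<\sqrt{12}$ by Bumby's estimate quoted in the introduction, already $N=4$ (with $\max f(\Lambda_4)=\sqrt{32}$) suffices, provided one upgrades the stated identity $L\cap(-\infty,1+\sqrt{21}]=L_f(\Lambda_4)\cap(-\infty,1+\sqrt{21}]$ to the full interval $(-\infty,\sqrt{32}]$, which the text asserts is true but harder. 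With that in hand, $a=a(f,\varphi)$ for the dynamical spectrum of $(f,\Lambda_N)$ coincides with the classical $a$, because on $(-\infty,\sqrt{32}]$ the two spectra agree and $d(t)=HD(L\cap(-\infty,t))$ agrees with $HD(L_f(\Lambda_N)\cap(-\infty,t))$ for $t\le\sqrt{32}$.

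Next I would invoke Theorem D itself. The diffeomorphism $\varphi$ is $C^2$-conservative (the text shows $\tilde\varphi$ preserves a smooth measure on $C(N)\times C(N)$ and $h(x,y)=(x,1/y)$ conjugates $\varphi$ to $\tilde\varphi$), so $\varphi\in\Diff^2_\ast(M)$. Theorem D then furnishes an open $\mathcal U\ni\varphi$, a residual $\mathcal U^{\ast\ast}\subset\mathcal U$, and for each $\psi\in\mathcal U^{\ast\ast}$ a $C^k$-residual $\mathcal R_\psi$ of potentials such that the phase-transition conclusions hold for $(g,\Lambda_\psi)$ whenever $g\in\mathcal R_\psi$. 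The subtlety is that the \emph{specific} pair $(\varphi,f)$ we care about need not lie in $\mathcal U^{\ast\ast}\times\mathcal R_\varphi$; Theorem D is a genericity statement, whereas the Conjecture is about the concrete classical spectra. So the honest content of the Conjecture is that one \emph{expects} $(\varphi,f)$ to behave like the generic pair. In a proof proposal I would therefore present two routes: either (a) argue that $f(x,y)=x+y$ together with this particular $\varphi$ already satisfies the non-degeneracy conditions $\partial_{s,u}f(x^{s,u})\neq0$ at the relevant maximizing points and that $HD(\tilde\Lambda)\neq1$ for the hyperbolic subsets of finite type one actually encounters below level $a$ — this would require a hands-on computation with continued fractions and the known structure of $M\cap(-\infty,3)$ — or (b) state honestly that the full Conjecture is \emph{not} proved here, only strongly motivated, and that what we can rigorously extract is the implication: if the classical $(\varphi,f)$ were replaced by a generic $C^2$-small perturbation, the phase transition of Theorem D would hold.

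The key steps, in order: (1) establish $L\cap(-\infty,\sqrt{32}]=L_f(\Lambda_4)$ and $M\cap(-\infty,\sqrt{32}]=M_f(\Lambda_4)$ in full (the text flags this as nontrivial); (2) check $\varphi$ is genuinely $C^2$-conservative via the Ito--Schmidt invariant measure and the conjugacy $h$; (3) identify the classical transition parameter $a$ with the dynamical $a(f,\varphi)$, using $(\ref{m2})$ and the interval-matching of step (1), and verify $a<\sqrt{32}$ via Bumby's bound $a<\sqrt{12}$; (4) apply Theorem D to $(\varphi,f)$ — modulo the genericity gap — to obtain $\L(L\cap(-\infty,a-\delta))=0=\L(M\cap(-\infty,a-\delta))$, $\inte(L\cap(-\infty,a+\delta))\neq\emptyset\neq\inte(M\cap(-\infty,a+\delta))$, and $HD(L\cap(-\infty,a))=HD(M\cap(-\infty,a))=1$; and (5) reconcile the non-degeneracy hypotheses of Theorem D (the condition $f\in\mathcal R_\psi$, i.e. unique maximizer with $Df\cdot e^{s,u}\neq0$) with the explicit $f(x,y)=x+y$ on $\Lambda_4$.

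I expect step (5) — controlling whether the concrete height function $x+y$ and the concrete Gauss-type $\varphi$ land in the good residual sets of Theorem D, rather than merely being approximable by such pairs — to be the main obstacle, and indeed this is exactly why the statement is phrased as a \emph{Conjecture} rather than a theorem: Theorem D is a generic statement about perturbations of $(\varphi,f)$, and descending from ``generic'' to ``this specific arithmetically meaningful pair'' is precisely the open problem. A secondary obstacle is step (1): extending the spectral identification from $(-\infty,1+\sqrt{21}]$ to $(-\infty,\sqrt{32}]$ requires the finer combinatorial analysis of which words in $\{1,2,3,4\}^{\mathbb Z}$ realize Lagrange and Markov values up to $\sqrt{32}$, using Markov's classification near $3$ and the forbidden-transition description from Chapter 4 of \cite{CF-89}. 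Thus the realistic outcome of this section is to record the Conjecture, supported by the rigorous special-interval identification and by the general Theorem D, as a natural prediction about the classical spectra rather than a completed proof.
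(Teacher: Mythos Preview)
Your analysis is essentially correct, and in particular you have correctly identified that this statement is a \emph{Conjecture}, not a theorem: the paper does not provide a proof. What follows the statement in the paper is not a proof attempt but rather a discussion of consequences --- namely, that the conjecture would imply $\inte(C(2)+C(2))\neq\emptyset$, via the inclusion $L\cap(-\infty,a+\delta)\subset f(\Lambda_{a+\delta})\subset \tilde C(2)+C(2)$ for small $\delta$ --- together with a remark that it would suffice to verify that the specific Gauss-type diffeomorphism $\varphi$ lies in the residual set $\mathcal R$ (i.e., has no subhorseshoe of Hausdorff dimension exactly $1$, and that every subhorseshoe of dimension $>1$ yields nonempty interior in the spectrum for this $f$).

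Your diagnosis of the obstruction is exactly the one the paper implicitly records: Theorem~D is a genericity statement about residual sets of conservative diffeomorphisms and potentials, and the open problem is precisely whether the \emph{specific}, arithmetically meaningful pair $(\varphi,f)$ with $f(x,y)=x+y$ satisfies those non-degeneracy conditions. Your step~(5) names this gap accurately. One small addition: the paper's remark after the conjecture singles out the condition ``$\varphi$ has no subhorseshoe of Hausdorff dimension equal to $1$'' as the key hypothesis to verify, and notes separately that $f\in\mathcal H_\varphi$ does hold for this $f$. So the genericity gap is really concentrated on the diffeomorphism side rather than equally on both, which slightly sharpens your step~(5).
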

If the above conjecture is true we have the following striking result:
	$$\inte(C(2)+C(2))\neq \emptyset,$$
where $C(2)+C(2)=\{x+y;x,y\in C(2)\}.$

Recall that $HD(\tilde{C}(2)\times C(2))=HD(C(2))+HD(C(2))>1$. Moreover, if we define $f:U\rightarrow \mathbb{R}$ by 
	$$f(x,y)=x+y$$
then $L\cap (-\infty,c)\supset L_f(\Lambda_2)$, observe that $\max L_f(\Lambda_2)=\max f(\Lambda_2)=2+2B_2=\sqrt{12}$. On the other hand, by remark \ref{r3.1} we have that 
$$L\cap (-\infty,a+\delta)\subset f(\Lambda_{a+\delta}).$$
Since $a<\sqrt{12}$ (cf. \cite{M3}) we must have for $\delta>0$ suficiently small that 
$f(\Lambda_{a+\delta})\subset f(\Lambda_2)=\tilde{C}(2)+C(2).$ So, if the conjecture is true, $$\emptyset \neq \inte(L\cap (-\infty, a+\delta))\subset \inte(\tilde{C}(2)+C(2)).$$
Thus, $\inte(C(2)+C(2))\neq \emptyset$.  

\begin{remark}
	We observe for that the above conjecture be true is sufficient that $\varphi\in \mathcal{R}$, i.e., $\varphi$ can not has subhorseshoes with Hausdorff dimension equal to $1$ and for every subhorseshoe with Hausdorff dimension greater than $1$, in this way we have $\inte{L_f(\tilde{\Lambda})}\neq \emptyset$, where $\varphi$ and $f$ as same above. We note that in this case we have $f\in \mathcal{H}_{\varphi}$.
\end{remark}
\section{The beginning of the spectra}

Next we prove a theorem about the beginning of the Markov and Lagrange dynamical spectra for all $C^2$-diffeomorphism $\varphi$ having a horseshoe $\Lambda$ and the most $C^1$-maps $f:M\rightarrow \mathbb{R}$. 
Let $b(f,\varphi)=\inf\{t\in \mathbb{R};HD(\Lambda_t)>0\}$. Note that $b(f,\varphi)<+\infty$. Let $B=\{\delta\in (0,+\infty);f^{-1}(b+\delta)\cap \Lambda\neq \emptyset\}$. Note that $\Lambda_b$ cannot be a subhorseshoe, because otherwise we could find another subhorseshoe with smaller Hausdorff dimension than it, in a similary way such as lemma 6 of \cite{MR-15}. Similarly to the proposition \ref{a1}, $B$ satisfies the following propositions

\begin{proposition}\label{B1}
	$0$ is an accumulation point of $B$.
\end{proposition}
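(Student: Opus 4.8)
The plan is to argue by contradiction, following verbatim the scheme of Proposition~\ref{a1}. So suppose, for a contradiction, that $0$ is not an accumulation point of $B$, i.e.\ there is $\epsilon>0$ with $f^{-1}(b+\delta)\cap\Lambda=\emptyset$ for every $\delta\in(0,\epsilon)$; in other words no point of $\Lambda$ has $f$-value in the interval $(b,b+\epsilon)$. The first step is to deduce that $\Lambda_b=\Lambda_{b+\delta}$ for all $\delta\in(0,\epsilon)$. The inclusion $\Lambda_b\subseteq\Lambda_{b+\delta}$ is immediate; conversely, if $x\in\Lambda_{b+\delta}$ and some iterate satisfied $f(\varphi^n(x))>b$, then $f(\varphi^n(x))=b+\delta_1$ with $0<\delta_1\le\delta<\epsilon$, so $\varphi^n(x)\in f^{-1}(b+\delta_1)\cap\Lambda$, which is empty --- a contradiction. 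Hence $f(\varphi^n(x))\le b$ for every $n\in\mathbb{Z}$, that is, $x\in\Lambda_b$.

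The second step is to upgrade this to the assertion that $\Lambda_b$ is a \emph{hyperbolic set of finite type}, exactly as in the proposition preceding Proposition~\ref{a1}. The compact sets $f^{-1}((-\infty,b])\cap\Lambda$ and $f^{-1}([b+\epsilon/2,+\infty))$ are disjoint, hence at positive distance; using Theorem~\ref{p} and the uniform continuity of $f$ I would take a Markov partition $\mathcal{P}$ of $\Lambda$ with diameter so small that every $P\in\mathcal{P}$ meeting $\Lambda_b$ satisfies $P\cap\Lambda\subseteq f^{-1}((-\infty,b+\epsilon/2))$, and therefore $P\cap\Lambda\subseteq f^{-1}((-\infty,b])$ because $\Lambda$ has no point with $f$-value in $(b,b+\epsilon)$. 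The maximal $\varphi$-invariant subset of the union of these finitely many rectangles is then exactly $\Lambda_b$, so $\Lambda_b$ is a hyperbolic set of finite type contained in $\Lambda$. Moreover, since $b+\delta>b$, the definition of $b$ gives $HD(\Lambda_{b+\delta})>0$, and $\Lambda_{b+\delta}=\Lambda_b$ yields $HD(\Lambda_b)>0$.

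It remains to contradict $b=\inf\{t\in\mathbb{R};HD(\Lambda_t)>0\}$. By the structure of hyperbolic sets of finite type (Remark~\ref{description1} together with (\ref{eq.transient})), $\Lambda_b$ contains a nontrivial subhorseshoe $\Lambda'$ with $HD(\Lambda')>0$, and since $\Lambda'\subseteq\Lambda_b$ we have $\max_{\Lambda'}f\le b$. If $\max_{\Lambda'}f<b$, then $\Lambda'\subseteq\Lambda_{t'}$ with $t'=\max_{\Lambda'}f<b$, so $HD(\Lambda_{t'})\ge HD(\Lambda')>0$, contradicting the minimality of $b$. If instead $\max_{\Lambda'}f=b$, I would pass to a proper subhorseshoe $\Lambda''\subsetneq\Lambda'$ obtained by discarding from a sufficiently fine Markov partition of $\Lambda'$ a rectangle meeting the closed proper subset of $\Lambda'$ where $f|_{\Lambda'}$ attains its maximum, chosen so as to keep $HD(\Lambda'')>0$; then $\max_{\Lambda''}f<b$ and the same contradiction applies. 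This last point --- ensuring simultaneously that the smaller subhorseshoe retains positive Hausdorff dimension and that its $f$-maximum has strictly decreased --- is the one genuinely delicate step, and it is precisely the type of combinatorial argument carried out in Lemma~6 of \cite{MR-15} (cf.\ the remark preceding the proposition).
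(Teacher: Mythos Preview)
Your proposal is correct and follows essentially the same approach as the paper. The paper's proof is extremely terse---it simply says the argument is analogous to Proposition~\ref{a1}, concludes $\Lambda_b=\Lambda_{b+\delta}$ is a subhorseshoe, and invokes the remark immediately preceding the proposition (which in turn defers to Lemma~6 of \cite{MR-15}) for the contradiction; you have unpacked these steps, including the passage through a hyperbolic set of finite type and the extraction of a nontrivial subhorseshoe of positive dimension, and you correctly identify that the final reduction (strictly lowering $\max f$ while keeping positive dimension) is exactly the content of that lemma.
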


\begin{proof}
	The proof is analogous to the proof of Proposition \ref{a1}. Otherwise $\Lambda_b=\Lambda_{b+\delta}$ for some $\delta$. But this implies that $\Lambda_b$ is a subhorseshoe, contradiction.
\end{proof}

\begin{proposition}\label{B2}
	Given $\delta>0$ there is a subhorseshoe $\tilde{\Lambda}$ with
	$$\Lambda_b\subsetneq \tilde{\Lambda}\subset \Lambda_{b+\delta}.$$
\end{proposition}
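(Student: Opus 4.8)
The plan is to mimic the construction in the proof of Proposition~\ref{p1}, with the threshold $a$ replaced by $b$, and then to promote the hyperbolic set of finite type it produces to a genuine subhorseshoe containing $\Lambda_b$. Two elementary facts will be used throughout. First, $HD(\Lambda_{b+\delta})>0$ for every $\delta>0$: since $t\mapsto\Lambda_t$ is nondecreasing and $b=\inf\{t:HD(\Lambda_t)>0\}$, there is $t<b+\delta$ with $HD(\Lambda_t)>0$, hence $HD(\Lambda_{b+\delta})\ge HD(\Lambda_t)>0$. Second, a hyperbolic set of finite type of positive Hausdorff dimension contains a nontrivial subhorseshoe: by Remark~\ref{description1} it decomposes into finitely many subhorseshoes, periodic orbits and transient sets, and by (\ref{eq.transient}) periodic orbits have Hausdorff dimension $0$ while a transient set has positive dimension only when one of the two subhorseshoes defining it is nontrivial, so positivity of the total dimension forces a nontrivial subhorseshoe to appear among the pieces.

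Next I would argue as follows. By Proposition~\ref{B1} choose $\delta'\in B$ with $0<\delta'<\delta$, so that $f^{-1}(b+\delta')\cap\Lambda\neq\emptyset$. The compact sets $f^{-1}((-\infty,b+\delta'/2])$ and $f^{-1}([b+\delta',+\infty))$ are disjoint, hence at positive distance; by Theorem~\ref{p} pick a Markov partition $\mathcal{P}$ of $\Lambda$ of diameter smaller than that distance, and let $\mathcal{P}_0\subset\mathcal{P}$ consist of the rectangles meeting $\Lambda_{b+\delta'/2}$. Then $\bigcup_{P\in\mathcal{P}_0}P\subset f^{-1}((-\infty,b+\delta'))$, so
$$\hat\Lambda:=\bigcap_{n\in\mathbb{Z}}\varphi^n\Big(\Lambda\cap\bigcup_{P\in\mathcal{P}_0}P\Big)$$
is a hyperbolic set of finite type with $\Lambda_b\subseteq\Lambda_{b+\delta'/2}\subseteq\hat\Lambda\subseteq\Lambda_{b+\delta'}\subseteq\Lambda_{b+\delta}$, and, by the first fact above, $HD(\hat\Lambda)\ge HD(\Lambda_{b+\delta'/2})>0$; in particular $\hat\Lambda$ contains a nontrivial subhorseshoe.

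It remains --- and this is the heart of the matter --- to replace $\hat\Lambda$ by a subhorseshoe $\tilde\Lambda$ that still contains $\Lambda_b$ and is still contained in $\Lambda_{b+\delta}$. In the symbolic model $\Lambda_b$ corresponds to a subshift $\Sigma_b$ of the subshift of finite type presenting $\hat\Lambda$; taking the hyperbolic set of finite type $\Lambda_b^{\#}$ obtained by allowing a transition precisely when it already occurs in $\Sigma_b$ gives $\Lambda_b\subseteq\Lambda_b^{\#}\subseteq\hat\Lambda$, and then, using the transitivity of the ambient horseshoe $\Lambda$, I would adjoin finitely many connecting rectangles to $\Lambda_b^{\#}$ so as to obtain a transitive hyperbolic set of finite type $\tilde\Lambda$ with $\Lambda_b\subseteq\tilde\Lambda$. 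The inclusion is then automatically strict, since a subhorseshoe is transitive whereas $\Lambda_b$, by the observation recorded before the proposition, is not. The main obstacle is to carry this out while keeping $\tilde\Lambda\subseteq\Lambda_{b+\delta}$, i.e.\ to choose the connecting orbits so that their $f$-values remain below $b+\delta$; this is where the slack $\delta-\delta'$ and a further refinement of $\mathcal{P}$ must be exploited. When $\Lambda_b$ is a finite union of periodic orbits and transient sets the connections are produced by the Smale horseshoe construction together with the shadowing lemma; the general case is reduced to this one by approximating $\Lambda_b$ from outside by hyperbolic sets of finite type of Hausdorff dimension arbitrarily close to $HD(\Lambda_b)$ and running the argument on those.
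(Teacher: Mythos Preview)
Your construction of the hyperbolic set of finite type $\hat{\Lambda}$ with $\Lambda_b\subseteq\hat\Lambda\subseteq\Lambda_{b+\delta}$ and $HD(\hat\Lambda)>0$ is correct, and this is precisely the paper's argument: the proof of Proposition~\ref{B2} simply refers back to the construction in Proposition~\ref{p1}, noting that the genericity assumption $\psi\in\mathcal{R}$ is not needed here because nontrivial subhorseshoes automatically have positive Hausdorff dimension. The paper stops there. Despite the word ``subhorseshoe'' in the statement, the paper never upgrades $\hat\Lambda$ to a transitive set, and the only place Proposition~\ref{B2} is invoked (the proof of Theorem~E) uses it in the form ``there is a hyperbolic set of finite type $\tilde{\Lambda}$ with $\Lambda_b\subset \tilde{\Lambda}\subset \Lambda_{b+\delta}$'', after which one passes to a nontrivial subhorseshoe $\Lambda'\subset\tilde\Lambda$ with no requirement that $\Lambda'\supseteq\Lambda_b$. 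So your first two paragraphs already deliver everything the paper actually proves and needs.

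Your third paragraph, by contrast, attempts something strictly stronger --- a \emph{transitive} $\tilde\Lambda$ with $\Lambda_b\subseteq\tilde\Lambda\subseteq\Lambda_{b+\delta}$ --- and here there is a genuine gap that you yourself flag as ``the main obstacle'' but do not close. The connecting orbits you propose to adjoin are supplied by the transitivity of the ambient horseshoe $\Lambda$, but transitivity of $\Lambda$ gives no control whatsoever on the $f$-values along those orbits: a connecting segment between two pieces of $\Lambda_b^\#$ may well pass through regions where $f>b+\delta$, and neither the slack $\delta-\delta'$ nor a refinement of the Markov partition helps, since the orbit is fixed by the dynamics rather than by the partition. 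Indeed, if connecting orbits with $f$-values below $b+\delta$ were always available, $\Lambda_{b+\delta}$ would itself be transitive, which is false in general. The shadowing-lemma sketch for the case where $\Lambda_b$ is a finite union of periodic orbits does not address this constraint either. The right conclusion is that transitivity of $\tilde\Lambda$ is neither established by the paper's proof nor needed for its application, and you should simply drop the third paragraph.
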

\begin{proof}
	Essentially the same proof of the proposition \ref{p1}, but now we take take $\delta\in B$. We only observe that in this present case we do not need of a residual subset $\mathcal{R}$ because there is no subhorseshoe with zero Hausdorff dimension.
\end{proof}

\begin{lemma}\label{le1}
	Let $K$ and $K'$ be two $C^s$-regular Cantor sets, $s>1$ such that $K\times K'\subset U$, where $U$ is an open set in $\mathbb{R}^2$. Suppose that $f:U\rightarrow \mathbb{R}$ be a $C^1$ function and there is $(x_0,y_0)\in K\times K'$ such that 
	$$\frac{\partial f(x_0,y_0)}{\partial y}\neq 0.$$
	Then,
	$$HD(f(K\times K'))>0.$$
\end{lemma}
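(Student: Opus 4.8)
The plan is to exploit the hypothesis $\partial f/\partial y(x_0,y_0)\neq 0$ to find a small sub-rectangle of $K\times K'$ on which the slice $y\mapsto f(x,y)$ is a diffeomorphism with derivative bounded away from $0$, and then to fix $x$ near $x_0$ so that $f$ restricted to the vertical Cantor set $\{x\}\times K'$ becomes a bi-Lipschitz image of $K'$ (or of a sub-Cantor set of $K'$ of the same dimension). First I would use continuity of $\partial f/\partial y$ to pick a neighbourhood $V=J_1\times J_2$ of $(x_0,y_0)$, with $J_1,J_2$ compact intervals, on which $|\partial f/\partial y|\ge c>0$. Shrinking $J_2$ if necessary (while keeping $J_2\cap K'$ of positive Hausdorff dimension — this is possible because any neighbourhood of a point of $K'$ contains a sub-Cantor set of $K'$ with dimension arbitrarily close to $HD(K')>0$, using the self-similar structure of the regular Cantor set $K'$ together with $y_0$ being chosen, if needed, as a point of the Cantor set, which we may assume since $\partial_y f\neq0$ at some point of $K\times K'$), we get that for each fixed $x\in J_1$ the map $g_x:J_2\to\mathbb{R}$, $g_x(y)=f(x,y)$ satisfies $|g_x'(y)|\ge c$, hence $g_x$ is injective and bi-Lipschitz onto its image with constants independent of $x$.

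The key point is then the standard fact that a bi-Lipschitz map does not change Hausdorff dimension: choosing any $x_1\in J_1\cap K$ (which is non-empty, since we may assume $x_0\in K$, or simply note that $J_1$ is a neighbourhood of a point of $K$ after possibly relabelling), we obtain
$$HD\big(f(\{x_1\}\times (J_2\cap K'))\big)=HD\big(g_{x_1}(J_2\cap K')\big)=HD(J_2\cap K')>0.$$
Since $\{x_1\}\times (J_2\cap K')\subset K\times K'$, its image under $f$ is contained in $f(K\times K')$, and Hausdorff dimension is monotone under inclusion, so
$$HD(f(K\times K'))\ge HD(J_2\cap K')>0,$$
which is exactly the claim.

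The main obstacle — really the only subtle point — is ensuring that after we shrink $J_2$ to make $|\partial_y f|$ bounded below, the intersection $J_2\cap K'$ still has positive Hausdorff dimension. This is where the regularity of the Cantor set is used: for a $C^s$-regular Cantor set, any sufficiently small interval of its construction is diffeomorphic, with bounded distortion, to a big piece of $K'$, so $HD(I\cap K')=HD(K')$ for every construction interval $I$; thus one should first replace the given $(x_0,y_0)$ by a point with $y_0\in K'$ (the hypothesis gives $\partial_y f\neq 0$ at a point of $K\times K'$, so indeed $y_0\in K'$ already), then choose $J_2$ to be (a neighbourhood of $y_0$ inside) a construction interval small enough that $|\partial_y f|\ge c>0$ on $J_1\times J_2$; this $J_2$ contains a construction interval $I$ of $K'$ with $y_0$ in it, and $HD(I\cap K')=HD(K')>0$. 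With this choice the argument above goes through verbatim, and one even gets the sharper conclusion $HD(f(K\times K'))\ge HD(K')$.
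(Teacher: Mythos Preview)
Your argument is correct and follows essentially the same route as the paper: fix the $x$-coordinate at the point $x_0\in K$ given by hypothesis, observe that the vertical slice $g_{x_0}(y)=f(x_0,y)$ is a diffeomorphism on a small construction interval of $K'$ around $y_0$, and use that diffeomorphisms preserve Hausdorff dimension together with $HD(I\cap K')=HD(K')>0$ for construction intervals $I$. The paper's version is only slightly leaner in that it works directly at $x=x_0$ rather than first securing bounds uniform over a rectangle $J_1\times J_2$ and then choosing an $x_1\in J_1\cap K$.
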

\begin{proof}
	Since $K\times K'$ is a compact set and $(x_0,y_0)\in K\times K'$ we can take an interval $I$ such that $(x_0,y)\in \{x_0\}\times I\subset U$ and $I$ is an interval of the construction of $K'$. By hypoteses $g_{x_0}:I\rightarrow \mathbb{R}$ defined by $g_{x_0}(y)=f(x_0,y)$ is a diffeomorphism on a possible small interval $J\subset I$. Hence $HD(g_{x_0}(J\cap K'))>0$ because $J\cap K'$ contains an interval of the construction of $K'$ and diffeomophisms preserve Hausdorff dimension. But $g_{x_0}(J\cap K')\subset f(K\times K')$. Therefore,
	$$HD(f(K\times K'))>0.$$
	Now we are ready to show
\end{proof}
\begin{mydef6}
	If $\varphi\in \Diff^2(M)$ has a horseshoe $\Lambda$ and $k\ge 1$ is given, then there is a residual subset $\mathcal{H}_{\varphi}\subset C^k(M,\mathbb{R})$ such that for every $f\in \mathcal{H}_{\varphi}$
	$$HD(L_f(\Lambda)\cap (-\infty,b+\delta))>0$$
	and
	$$HD(M_f(\Lambda\cap (-\infty,b+\delta)))>0,$$
	for all $\delta>0$, where $b=b(f,\varphi)=\inf\{t\in \mathbb{R};HD(\Lambda_t)>0\}$.
\end{mydef6}
\begin{proof}
	We denote by $S(\Lambda)=\{\Lambda^1,\Lambda^2,...\}$ the set of hyperbolic sets of finite type of $\Lambda$. We know that $\Lambda^i=\Lambda^{i,1}\cup ...\cup \Lambda^{i,n_i}$. We set $H_{\varphi}(\Lambda^i)=H_{\varphi}(\Lambda^{i,n_i})$, where $\Lambda^{i,n_i}$ is a horseshoe . Note that if $\Lambda^i$ is a horseshoe then $n_i=1$. Recall that, see \cite{MR-15}, every $f\in H_{\varphi}(\Lambda^i)$ has a unique maximum point $x$ such that $Df(x)e^{s,u}_x\neq 0.$ 
	 In \cite{MR-15} has been shown that if $f\in H_{\varphi}(\Lambda^{i,n_i})$ there is $\Lambda^j\subset \Lambda^{i,n_i}$ such that $f(\tilde{B}(\Lambda^j))\subset L_f(\Lambda^{i,n_i})\subset L_f(\Lambda)$, where $\tilde{B}$ is a diffeomorphism with $D(f\circ \tilde{B})(x)e^{s,u}_x\neq 0$. By the definition of $b$ we have that $HD(\Lambda_{b+\delta})>0$. Then, by proposition \ref{B2} there is a hyperbolic set of finite type $\tilde{\Lambda}$ with
	$$\Lambda_b\subset \tilde{\Lambda}\subset \Lambda_{b+\delta}.$$ Suppose then $f\in \mathcal{H}_{\varphi}=\bigcap_jH_{\varphi}(\Lambda^j)$.
	So, we can find another subhorseshoe $\Lambda'\subset \tilde{\Lambda}$ and a diffeomorphism $\tilde{B}$ such that
	\begin{equation}\label{eqtb1}
	f(\tilde{B}(\Lambda'))\subset L_f(\tilde{\Lambda})\subset L_f(\Lambda_{b+\delta}).
	\end{equation}
	But, by remark \ref{r3.2}, we have that  $L_f(\Lambda_{b+\delta})\subset L_f(\Lambda)\cap (-\infty,b+\delta]$. 
	Then, we have showed that
	$$f(\tilde{B}(\Lambda'))\subset L_f(\Lambda)\cap (-\infty,b+\delta].$$
	But $D(f\circ \tilde{B})(x)e^{s,u}_x\neq 0$. Therefore
	$$HD(f\circ \tilde{B}(\Lambda'))>0.$$
	Then,
	
	$$HD(L_f(\Lambda)\cap (-\infty, b+\delta))>0.$$
	In particular, this also prove that
	
	$$HD(M_f(\Lambda)\cap (-\infty,b+\delta))>0.$$  
	
	
\end{proof}

\begin{remark}
	We note that the above theorems does not occur in all the cases. The most simple example is to take $f$ any constant function.
\end{remark}

\textbf{Acknowledgements}: We would like to thank Sergio Roma\~na for very valuable discussions.






\end{document}